\numberwithin{equation}{section}
\newtheorem{thm}{Theorem}[section]
\newtheorem{lem}[thm]{Lemma}
\newtheorem{prop}[thm]{Proposition}
\theoremstyle{definition}
\newtheorem{definition}[thm]{Definition}
\theoremstyle{remark}
\crefname{thm}{Theorem}{Theorems}
\crefname{cor}{Corollary}{Corollaries}
\crefname{lem}{Lemma}{Lemmas}
\crefname{prop}{Proposition}{Propositions}
\crefname{definition}{Definition}{Definitions}
\crefname{example}{Example}{Examples}
\crefname{claim}{Claim}{Claims}
\crefname{conjecture}{Conjecture}{Conjectures}
\crefname{remark}{Remark}{Remarks}
\crefname{figure}{Figure}{Figures}
\crefname{section}{Section}{Sections}
\crefname{subsection}{Section}{Sections}
\crefname{introthm}{Theorem}{Theorems}
\crefname{introcor}{Corollary}{Corollaries}
\crefname{introconj}{Conjecture}{Conjectures}
\def\C{{\mathbb C}}
\def\Z{{\mathbb Z}}
\newcommand\mg{{\mathfrak{g}}}
\newcommand\cF{{\mathcal{F}}}
\newcommand\cM{{\mathcal{M}}}
\newcommand\cY{{\mathcal{Y}}}
\newcommand\cZ{{\mathcal{Z}}}
\newfont{\bg}{cmr9 scaled\magstep4}
\newcommand{\bigzerol}{\smash{\lower1.0ex\hbox{\bg 0}}}
\newcommand\qline[2]{\draw[-,shorten >=2pt,shorten <=2pt] (#1) -- (#2);}
\tikzset{
  mid arrow/.style={postaction={decorate,decoration={
        markings,
        mark=at position .5 with {\arrow[#1]{stealth}}
      }}},
}
\newcommand{\ol}{\overline}
\newcommand{\wt}{\widetilde}
\begin{document}
\title[Invariants of Weyl group action and $q$-characters]
{Invariants of Weyl group action and $q$-characters of quantum affine algebras}

\author[Rei Inoue]{Rei Inoue}
\address{Rei Inoue, Department of Mathematics and Informatics,
   Faculty of Science, Chiba University,
   Chiba 263-8522, Japan.}
\email{reiiy@math.s.chiba-u.ac.jp}

\author[Takao Yamazaki]{Takao Yamazaki}
\address{Takao Yamazaki, Department of Mathematics,
Chuo University, Bunkyo-ku, Kasuga, 1-13-27
Tokyo, 112-8551, Japan}
\email{ytakao@math.chuo-u.ac.jp}

\date{July 20, 2022}


\begin{abstract}
Let $W$ be the Weyl group corresponding to a finite dimensional simple Lie algebra $\mg$ of rank $\ell$ and let $m>1$ be an integer. 
In \cite{I21}, by applying cluster mutations,
a $W$-action on $\mathcal{Y}_m$ was constructed.
Here $\mathcal{Y}_m$ is 
the rational function field on 
$cm\ell$ commuting variables,
where $c \in \{ 1, 2, 3 \}$ depends on $\mg$.
This was motivated by the $q$-character map $\chi_q$ of the category of finite dimensional representations of quantum affine algebra $U_q(\hat{\mg})$. We showed in \cite{I21} that when $q$ is a root of unity, 
$\mathrm{Im} \chi_q$ is a subring of the $W$-invariant subfield $\mathcal{Y}_m^W$ of $\mathcal{Y}_m$. In this paper, we give more detailed study on $\mathcal{Y}_m^W$; for each reflection $r_i \in W$ associated to the $i$th simple root, we describe the $r_i$-invariant subfield $\mathcal{Y}_m^{r_i}$ of $\mathcal{Y}_m$.  
\end{abstract}

\keywords{Weyl group, $q$-character, cluster algebras}


\maketitle


\section{Introduction}

Let $\mg$ be a finite dimensional simple Lie algebra of rank $\ell$, 
and fix a positive integer $m > 1$. 
Let $I := \{1,2,\ldots,\ell \}$ be the rank set of $\mg$.
In \cite{I21} we defined an action of the Weyl group $W$ on the rational function field $\mathcal{Y}_m$ generated by free variables $y_i(n) ~(i \in I, ~ \in d \Z/ m d' \Z)$. 
Here $d$ and $d'$ are rational numbers determined from the root system for $\mg$ (see \S \ref{subsec:Lie-alg} for the definition). This Weyl group action was originally defined by sequences of cluster mutations on the cluster seeds \cite{ILP16, IIO19, I21} associated to some periodic quivers,
and extended to that on $\mathcal{Y}_m$ in \cite{I21}.

The motivation to introduce $y_i(n)$ was the $q$-characters for finite dimensional representations of quantum non-twisted affine algebras $U_q(\hat{\mg})$ studied by Frenkel and Reshetikhin \cite{FR98,FR99}. 
The $q$-character $\chi_q$ is a ring homomorphism, 
$$
  \chi_q : \mathrm{Rep}\,U_q(\hat{\mg}) \to \mathbf{Y} := \Z[Y_{i,a_i}^{\pm 1}; i \in I, a_i \in \C^\times],
$$ 
from the Grothendieck ring $\mathrm{Rep}\,U_q(\hat{\mg})$ of 
the category  of finite dimensional representations of $ U_q(\hat{\mg})$ to the Laurent polynomial ring $\mathbf{Y}$ generated by commuting variables $Y_{i,a_i}$. 
For a generic $q$, $\mathrm{Rep}\,U_q(\hat{\mg})$ is parametrized by $a \in \C^\times/q^{d \Z}$, and the ring $\mathbf{Y}$ is stratified as $\mathbf{Y} = \otimes_{a \in \C^\times/q^{d \Z}} \mathbf{Y}_{a}$, where $\mathbf{Y}_{a} := \Z[Y_{i,a q^n}^{\pm 1};i \in I, n \in d\Z]$. 
The intersection of 
$\mathrm{Im} \chi_q$ and $\mathbf{Y}_{a}$ is known to be 
\begin{align}\label{eq:q-char}
\mathrm{Im}\chi_q \cap \mathbf{Y}_{a}
= \bigcap_{i \in I} \Z[Z_{i,a q^n}, Y_{j,a q^n}^{\pm 1};
j \in I \setminus \{ i \}, n \in d\Z],
\end{align}
where the $Z_{i,a q^n}$ are Laurent binomials in $\mathbf{Y}_{a}$.

When $q$ is a root of unity, $q^{2 d'm} = 1$, the above structure of the $q$-character map is basically preserved; we just put the condition $q^{2 d'm} = 1$ to \eqref{eq:q-char} \cite{FM01}.
We showed in \cite{I21} that, by identifying $\mathbf{Y}_a$ with $\Z[y_i(n)^{\pm 1};i \in I, n \in d \Z/d'm \Z]$, $\mathrm{Im}\chi_q \cap \mathbf{Y}_a$ is contained in the $W$-invariant subfield $\mathcal{Y}_m^W$ of $\mathcal{Y}_m$.

The aim of this paper is to study $\mathcal{Y}_m^W$ in more depth.
For $i \in I$, define a subfield $\mathcal{Z}^{(i)}_m$ of $\mathcal{Y}_m$ by $\mathcal{Z}^{(i)}_m := \C(z_i(n), y_j(n); j \in I \setminus \{ i \}, n \in d\Z/d'm\Z)$, where $z_i(n)$ are Laurent binomials in the $y_j(n)$ given by \eqref{psi-def}, corresponding to $Z_{i,a q^n}$ appearing in \eqref{eq:q-char}. Let $\alpha_i$ be the $i$th simple root, and 
$r_i \in W$ be the reflection associated to $\alpha_i$. 
Our main result is as follows.    
\begin{thm}[Theorem \ref{thm:simply-laced}, \ref{thm:non-simply-laced}]
For each $i \in I$ such that $\alpha_i$ is a shortest root, the $r_i$-invariant subfield $\mathcal{Y}_m^{r_i}$ of $\mathcal{Y}_m$ agrees with $\mathcal{Z}^{(i)}_m$.
For each $i \in I$ such that $\alpha_i$ is not a shortest root, $\mathcal{Y}_m^{r_i}$ agrees with an extension $\mathcal{Z}^{(i) \prime}_m$ of $\mathcal{Z}^{(i)}_m$ explicitly constructed in \eqref{eq:def-sZprime} below,
whose degree is either two or four
according to $d_i=2d$ or $d_i=3d$.
\end{thm}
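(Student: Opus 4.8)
The plan is to treat the statement as a problem in the Galois theory of the degree-two extension cut out by the involution $r_i$, carried out after reducing to a single node. Since $r_i^2=1$ in $W$, the induced automorphism of $\mathcal{Y}_m$ is an involution; as $r_i$ fixes every $y_j(n)$ with $j\neq i$ (read off from the explicit mutation formula for the action), it is a nontrivial automorphism of $\mathcal{Y}_m=K(y_i(n);n\in d\Z/md'\Z)$ over the base field $K:=\C(y_j(n);j\in I\setminus\{i\},n)$. Artin's theorem then gives $[\mathcal{Y}_m:\mathcal{Y}_m^{r_i}]=2$. Hence it suffices to (i) verify the inclusion $\mathcal{Z}^{(i)}_m\subseteq\mathcal{Y}_m^{r_i}$ by checking $r_i(z_i(n))=z_i(n)$ directly from \eqref{psi-def}, and (ii) determine $\mathcal{Y}_m^{r_i}$ precisely by computing $[\mathcal{Y}_m:\mathcal{Z}^{(i)}_m]$ and then locating $\mathcal{Y}_m^{r_i}$ inside the resulting extension.

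For (ii) I would first analyse the structure of $\mathcal{Y}_m/\mathcal{Z}^{(i)}_m$ over $K$. The two monomials of the binomial $z_i(n)$ sit at indices differing by the $\alpha_i$-step $d_i$, so the relations $z_i(n)=y_i(n)+(\text{monomial in }K)\,y_i(n+d_i)^{-1}$ chain the variables $y_i(n)$ along the orbits of translation by $d_i$ on $d\Z/md'\Z$, of which there are exactly $d_i/d$. Along each orbit one solves $y_i(n+d_i)$ as a M\"obius function of $y_i(n)$ with coefficients in $\mathcal{Z}^{(i)}_m$, and closing up the cyclic orbit yields a single M\"obius fixed-point equation, i.e.\ a quadratic over $\mathcal{Z}^{(i)}_m$ whose solution generates that orbit's variables. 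Writing $\sqrt{D_k}$ for the square root produced by the $k$-th orbit, one gets $\mathcal{Y}_m=\mathcal{Z}^{(i)}_m(\sqrt{D_1},\dots,\sqrt{D_{d_i/d}})$. Because distinct orbits involve disjoint sets of variables $y_i(n)$, the classes $[D_k]$ are independent in $(\mathcal{Z}^{(i)}_m)^\times/((\mathcal{Z}^{(i)}_m)^\times)^2$, so $\mathcal{Y}_m/\mathcal{Z}^{(i)}_m$ is $(\Z/2)^{d_i/d}$-Galois of degree $2^{d_i/d}$. In particular, when $\alpha_i$ is a shortest root $d_i=d$, there is one orbit and $[\mathcal{Y}_m:\mathcal{Z}^{(i)}_m]=2=[\mathcal{Y}_m:\mathcal{Y}_m^{r_i}]$, which together with (i) forces $\mathcal{Y}_m^{r_i}=\mathcal{Z}^{(i)}_m$.

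It remains to pin down $r_i$ inside $(\Z/2)^{d_i/d}$ when $d_i\in\{2d,3d\}$. The spectral shift $\tau\colon y_k(n)\mapsto y_k(n+d)$ is an automorphism of $\mathcal{Y}_m$ that permutes the $d_i/d$ orbits cyclically and transitively, and it commutes with $r_i$ because the Weyl action was defined uniformly in the spectral parameter. Consequently $r_i$ acts in the same way on every orbit; being a nontrivial involution fixing $\mathcal{Z}^{(i)}_m$, it must send $\sqrt{D_k}\mapsto-\sqrt{D_k}$ on each orbit, i.e.\ $r_i$ is the diagonal element of $(\Z/2)^{d_i/d}$. Its fixed field is therefore $\mathcal{Y}_m^{r_i}=\mathcal{Z}^{(i)}_m(\sqrt{D_jD_k};1\le j,k\le d_i/d)$, generated over $\mathcal{Z}^{(i)}_m$ by the pairwise products $\sqrt{D_jD_k}$, of degree $2^{d_i/d-1}$. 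This is $2$ when $d_i=2d$ and $4$ when $d_i=3d$, and one checks that these generators coincide with the elements defining $\mathcal{Z}^{(i)\prime}_m$ in \eqref{eq:def-sZprime}, completing the identification.

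The main obstacle is the exact degree computation in (ii): proving that each orbit contributes a genuinely irreducible quadratic (the closing M\"obius map is never the identity, which I expect to follow from the genericity of the $K$-coefficients coming from the free neighbouring variables), and that the $[D_k]$ are independent square classes, so that the degree is exactly $2^{d_i/d}$ rather than smaller. A secondary technical point is confirming that $\tau$ and $r_i$ commute and that $r_i$ is nontrivial on a single orbit; once these sign and independence facts are secured, the Galois-theoretic bookkeeping delivers all three cases uniformly.
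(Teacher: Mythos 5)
Your plan has the same architecture as the paper's own proof of its refined result (Theorem \ref{thm:refined}): your translation orbits are the sets $N_{i,s}$, your square roots $\sqrt{D_s}$ play the role of the elements $\delta_{i,s}$ of \eqref{eq:def-bldYF-delta}, your orbit-wise sign flips are the partial reflections $r_{i,s}$ of \eqref{eq:Weyl-r_is}, and your $\tau$-equivariance argument identifying $r_i$ with the diagonal element of $(\Z/2\Z)^{d_i/d}$ is how the paper passes from Theorem \ref{thm:refined} to Theorem \ref{thm:non-simply-laced} (there via $r_i=\prod_s r_{i,s}$, Lemma \ref{lem:ri-Yf-non}). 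The problem is that the two claims you defer to ``genericity'' and ``disjoint variables'' are precisely the content of the theorem. First, you conflate non-degeneracy with irreducibility: showing the closing M\"obius map $M$ is not the identity only gives $[\cY_m:\cZ_m^{(i)}]\le 2^{d_i/d}$; the closing quadratic could still have a root in $\cZ_m^{(i)}$, and excluding that needs a separate input, e.g.\ the facts quoted from \cite{I21} (Theorems \ref{thm:WonY-g}, \ref{thm:I2-2}) that $r_i$ is a nontrivial automorphism fixing $\cZ_m^{(i)}$ pointwise. Moreover $M\neq\mathrm{id}$ itself is not free: it is equivalent to the algebraic independence of the cyclic family $z_i(n)$, $n\in N_{i,s}$, which the paper never assumes and only obtains a posteriori from its explicit computation (the infinite-variable Lemma \ref{lem:alg-indep} does not transfer, since the cardinality trick used there fails in the cyclic setting); the natural direct proof writes the matrix entries of $M$ in the variables $y_i(n)$, which is essentially the paper's Lemma \ref{lem:g1-C-f}. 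Second, independence of the square classes $[D_k]$ does not follow formally from disjointness of the variable sets, because all the $D_k$ live in the compositum $\cZ_m^{(i)}$; this is exactly where the paper invests its hardest work, constructing the commuting involutions $r_{i,s}$, proving each has order two (Proposition \ref{lem:Weyl-prefolding}, whose proof occupies \S\ref{app:proof-order2} via the identity \eqref{eq:f-prop}), and exhibiting the explicit anti-invariant elements with $\delta_{i,s}^2=(A^{(d'm/d_i)})^2-4\boldsymbol{F}_{i,s}\in\cZ_m^{(i)}$ (Theorem \ref{thm:g2-delta}); Artin's theorem applied to the resulting group $R_i$ then gives the lower bound $2^{d_i/d}$.

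These gaps are fixable, and a repaired version of your argument would be genuinely interesting: writing $K=\C(y_j(n);j\neq i)$, $L_s=K(y_i(n);n\in N_{i,s})$, $Z_s=K(z_i(n);n\in N_{i,s})$, the field $\cY_m$ is the fraction field of $L_1\otimes_K\cdots\otimes_K L_{d_i/d}$, so once $[L_s:Z_s]=2$ is established one can extend the nontrivial $Z_s$-automorphism of $L_s$ by the identity on the other tensor factors; this manufactures the full group $(\Z/2\Z)^{d_i/d}$ abstractly and would bypass Proposition \ref{lem:Weyl-prefolding} and the recursion machinery \eqref{eq:g1-Ck-non}--\eqref{eq:g1-A-non} entirely. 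Note also that your last step, ``one checks that these generators coincide with the elements defining $\cZ_m^{(i)\prime}$ in \eqref{eq:def-sZprime},'' is a real step, not a formality: you must show the concrete $\delta_{i,s}$ of \eqref{eq:def-bldYF-delta} is an anti-invariant generator of the orbit quadratic with square in $\cZ_m^{(i)}$; in the paper this is Theorem \ref{thm:g2-delta}(1)(2), though it can be had cheaply from \eqref{Weyl-y} by the telescoping product $r_i(\boldsymbol{y}_{i,s})=\boldsymbol{y}_{i,s}\prod_{n\in N_{i,s}}X_i(n-d_i)=\boldsymbol{F}_{i,s}/\boldsymbol{y}_{i,s}$ (the $P_i$-factors cancel around the orbit), whence $r_i(\delta_{i,s})=-\delta_{i,s}$. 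As submitted, however, the proposal asserts rather than proves the non-degeneracy, the irreducibility, and the degree count, and these assertions are the theorem.
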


In the case $\mg=A_1$, 
the theorem says that $\mathcal{Z}^{(1)}_m = \mathcal{Y}_m^W$.
For general $\mg$ 
it seems difficult to find a set of generators 
of $\mathcal{Y}_m^W$.
We leave this as an open problem.

\subsection*{Related topics}
The Weyl group action studied in this paper is related to cluster algebraic structure. We remark about some topics.

In \cite{ILP16}, a realization of the Weyl group for $\mg = A_\ell$ was defined as sequences of cluster mutations in triangular grid quivers on a cylinder with $m \ell$ vertices. It was shown that 
the affine geometric $R$-matrix of symmetric power representations for the quantum affine algebra $U'_q(A_\ell^{(1)})$ is obtained from the Weyl group realization. The quantization of the geometric $R$-matrix is also introduced by applying quantum cluster mutations.  
This cluster realization of Weyl groups is generalized to that for a symmetrizable Kac-Moody Lie algebra in \cite{IIO19}. When a Lie algebra $\mg$ is finite dimensional and $m$ is the Coxeter number of $\mg$, this cluster structure has an application in higher Teichm\"uller theory \`a la Fock and Goncharov \cite{FG03} as studied in \cite{GS16,IIO19,GS19}. This is also related to positive representations of $U_q(\mg)$ \cite{Ip16, SS16}.

On the other hand, for a finite dimensional Lie algebra $\mg$, the cluster structure of the $q$-characters for a finite dimensional representation of the affine quantum group $U_q(\hat{\mg})$ was studied by Hernandez and Leclerc \cite{HL16}, by introducing an infinite quiver. When $\mg$ has a simply laced Dynkin diagram, this quiver reduces to what was used in \cite{IIO19} by setting $m$-periodicity. 
The quivers used in \cite{I21} correspond to the periodic versions of \cite{HL16} for all $\mg$.

\subsection*{Contents of the paper}

This paper is organized as follows. In \S 2, after fixing basic notations in Lie algebras, we recall the Weyl group action on $\mathcal{Y}_m$ introduced in \cite{I21}.
In \S 3 and \S 4, we study the $W$-invariant subfield $\mathcal{Y}_m^W$ when $\mg$ has a simply laced Dynkin diagram and a non-simply laced Dynkin diagram respectively.

\subsection*{Acknowledgement}
RI is supported by JSPS KAKENHI Grant Number 19K03440. 
TY is supported by JSPS KAKENHI Grant Number 21K03153.

\section{Weyl group action on $\mathcal{Y}_m$}

\subsection{Lie algebras and Weyl groups}\label{subsec:Lie-alg}
First we recall notations related to Lie algebras.
Let $\mg$ be a finite-dimensional simple Lie algebra of rank $\ell$ over $\C$. Denote its rank set by $I = \{1,2,\ldots,\ell \}$. 
For $i \in I$, we write $\alpha_i$ for the $i$th simple root.
The Cartan matrix  
$(C_{ij})_{i,j \in I}$
is given by 
$$
C_{ij} = 2 \, \frac{(\alpha_i \,,\,\alpha_j)}{(\alpha_i\,,\,\alpha_i)},
$$
where $( ~~ ,~~ )$ is the inner product. See Figure \ref{Dynkin-diagrams} for the convention of the Dynkin diagrams in this paper.
We  define
\begin{align}\label{eq:def-d}
  d_i = \frac{1}{2} ~( \alpha_i \,,\, \alpha_i),
\quad d= \min\{ d_i ; ~i \in I\}, 
\quad d'= \max\{ d_i ; ~i \in I \},
\end{align}
which are explicitly given by the following table:
\begin{align}\label{table:dd}
\begin{tabular}{cll}
$A_\ell, ~D_\ell, ~E_\ell $ :
& $d_i=1 ~(i=1, \dots, \ell)$,~~~
&$d=d'=1$ 
\\
$B_\ell $ :
& $d_i=1 ~(i=1, \dots, \ell-1), ~d_\ell=\frac{1}{2}$,~~~
&$d=\frac{1}{2}, ~d'=1$
\\
$C_\ell $ :
& $d_i=1 ~(i=1, \dots, \ell-1), ~d_\ell=2$,~~~
&$d=1, ~d'=2$
\\
$F_4 $ :
& $d_1=d_2=1, ~d_3=d_4=\frac{1}{2}$,~~~
&$d=\frac{1}{2}, ~d'=1$ 
\\
$G_2 $ :
& $d_1=1, d_2=3$,~~~
&$d=1, ~d'=3$
\end{tabular}
\end{align}

The Weyl group $W$ associated with $\mg$ 
admits the following presentation:
$$
W=\langle r_i; ~i \in I \mid (r_i r_j)^{m_{ij}}=1; ~i, j\in I \rangle.
$$
Here 
$r_i \in W$ is the reflection associated to $\alpha_i$, and
$(m_{ij})_{i,j \in I}$ is a symmetric matrix 
given by $m_{ii}=1$ for all $i$
and by the following table for $i \neq j$:
\[
\begin{tabular}{rccccc}
$C_{ij}C_{ji}:$ & $0$ & $1$ & $2$ & $3$ 
\\
$m_{ij}:$         & $2$ & $3$ & $4$ & $6.$ 
\end{tabular}
\]

\begin{figure}[ht]
\begin{tikzpicture}
\begin{scope}[>=latex]
\draw (0,9) node{$A_\ell:$};
\draw (1,9) circle(2pt) coordinate(A1) node[above]{$1$};
\draw (2.5,9) circle(2pt) coordinate(A2) node[above]{$2$};
\draw (4,9) circle(2pt) coordinate(A3) node[above]{$3$};
\draw (8,9) circle(2pt) coordinate(A4) node[above]{$\ell-1$};
\draw (9.5,9) circle(2pt) coordinate(A5) node[above]{$\ell$};
\qline{A2}{A1}
\qline{A3}{A2}
\draw[-,shorten >=2pt,shorten <=2pt] (5.5,9) -- (A3);
\draw (6,9) node{$\dots$};
\draw[-,shorten >=2pt,shorten <=2pt] (A4) -- (6.5,9);
\qline{A5}{A4}

\draw (0,7.5) node{$B_\ell:$};
\draw (1,7.5) circle(2pt) coordinate(B1) node[above]{$1$};
\draw (2.5,7.5) circle(2pt) coordinate(B2) node[above]{$2$};
\draw (4,7.5) circle(2pt) coordinate(B3) node[above]{$3$};
\draw (8,7.5) circle(2pt) coordinate(B4) node[above]{$\ell-1$};
\draw (9.5,7.5) circle(2pt) coordinate(B5) node[above]{$\ell$};
\qline{B2}{B1}
\qline{B3}{B2}
\draw[-,shorten >=2pt,shorten <=2pt] (5.5,7.5) -- (B3);
\draw (6,7.5) node{$\dots$};
\draw[-,shorten >=2pt,shorten <=2pt] (B4) -- (6.5,7.5);
\draw[-,shorten >=2pt,shorten <=2pt] (8,7.475) -- (9.5,7.475);
\draw[-,shorten >=2pt,shorten <=2pt] (8,7.525) -- (9.5,7.525);
\draw (8.75,7.5) node{$>$};

\draw (0,6) node{$C_\ell:$};
\draw (1,6) circle(2pt) coordinate(C1) node[above]{$1$};
\draw (2.5,6) circle(2pt) coordinate(C2) node[above]{$2$};
\draw (4,6) circle(2pt) coordinate(C3) node[above]{$3$};
\draw (8,6) circle(2pt) coordinate(C4) node[above]{$\ell-1$};
\draw (9.5,6) circle(2pt) coordinate(C5) node[above]{$\ell$};
\qline{C2}{C1}
\qline{C3}{C2}
\draw[-,shorten >=2pt,shorten <=2pt] (5.5,6) -- (C3);
\draw (6,6) node{$\dots$};
\draw[-,shorten >=2pt,shorten <=2pt] (C4) -- (6.5,6);
\draw[-,shorten >=2pt,shorten <=2pt] (8,5.975) -- (9.5,5.975);
\draw[-,shorten >=2pt,shorten <=2pt] (8,6.025) -- (9.5,6.025);
\draw (8.75,6) node{$<$};

\draw (-0.4,4.5) node{$D_\ell ~(\ell \geq 4):$};
\draw (1,4.5) circle(2pt) coordinate(D1) node[above]{$1$};
\draw (2.5,4.5) circle(2pt) coordinate(D2) node[above]{$2$};
\draw (4,4.5) circle(2pt) coordinate(D3) node[above]{$3$};
\draw (8,4.5) circle(2pt) coordinate(D4) node[above]{$\ell-2$};
\draw (9.5,4.5) circle(2pt) coordinate(D5) node[above]{$\ell-1$};
\draw (8,3) circle(2pt) coordinate(D6) node[right]{$\ell$};
\qline{D2}{D1}
\qline{D3}{D2}
\draw[-,shorten >=2pt,shorten <=2pt] (5.5,4.5) -- (D3);
\draw (6,4.5) node{$\dots$};
\draw[-,shorten >=2pt,shorten <=2pt] (D4) -- (6.5,4.5);
\qline{D5}{D4}
\qline{D6}{D4}

\draw (-0.7,2) node{$E_\ell~(\ell=6,7,8):$}; 
\draw (1,2) circle(2pt) coordinate(E1) node[above]{$1$};
\draw (2.5,2) circle(2pt) coordinate(E2) node[above]{$2$};
\draw (4,2) circle(2pt) coordinate(E3) node[above]{$3$};
\draw (5.5,2) circle(2pt) coordinate(E4) node[above]{$5$};
\draw (8,2) circle(2pt) coordinate(E5) node[above]{$\ell-1$};
\draw (9.5,2) circle(2pt) coordinate(E6) node[above]{$\ell$};
\draw (4,0.5) circle(2pt) coordinate(E7) node[right]{$4$};
\qline{E2}{E1}
\qline{E3}{E2}
\qline{E4}{E3}
\draw (6,2) node{$\dots$};
\draw[-,shorten >=2pt,shorten <=2pt] (E5) -- (6.5,2);
\qline{E6}{E5}
\qline{E7}{E3}

\draw (0,-0.5) node{$F_4:$};
\draw (1,-0.5) circle(2pt) coordinate(F1) node[above]{$1$};
\draw (2.5,-0.5) circle(2pt) coordinate(F2) node[above]{$2$};
\draw (4,-0.5) circle(2pt) coordinate(F3) node[above]{$3$};
\draw (5.5,-0.5) circle(2pt) coordinate(F4) node[above]{$4$};
\qline{F2}{F1}
\qline{F4}{F3}
\draw[-,shorten >=2pt,shorten <=2pt] (2.5,-0.475) -- (4,-0.475);
\draw[-,shorten >=2pt,shorten <=2pt] (2.5,-0.525) -- (4,-0.525);
\draw (3.25,-0.5) node{$>$};

\draw (7,-0.5) node{$G_2:$};
\draw (8,-0.5) circle(2pt) coordinate(G1) node[above]{$1$};
\draw (9.5,-0.5) circle(2pt) coordinate(G2) node[above]{$2$};
\qline{G2}{G1}
\draw[-,shorten >=2pt,shorten <=2pt] (8,-0.45) -- (9.5,-0.45); 
\draw[-,shorten >=2pt,shorten <=2pt] (8,-0.55) -- (9.5,-0.55); 
\draw (8.75,-0.5) node{$<$};

\end{scope}
\end{tikzpicture}
\caption{Dynkin diagrams for $\mg$}
\label{Dynkin-diagrams}
\end{figure}

\subsection{Weyl group action}

We fix an integer $m > 1$,
and let
\begin{equation}\label{eq:def-Ym-general}
\mathcal{Y}_m
:= \C(y_i(n);  i \in I, n \in d\Z/d'm\Z)
\end{equation}
be the rational function field 
on the commuting variables 
$y_i(n), ~(i,n) \in I \times d \Z/d'm\Z$.
We define elements of $\mathcal{Y}_m$ 
for $(i,n) \in I \times d \Z/d'm\Z$ as follows:
\begin{align}
\label{eq:a-y-g}
&F_i(n)
=
\begin{cases}
y_{i-1}(n+\frac{1}{2}) y_{i+1}(n) & 
(\mg, i)=(B_\ell, \ell), \,(F_4, 3);
\\
y_{i-1}(n+1) y_{i+1}(n) y_{i+1}(n+\frac{1}{2}) & 
(\mg, i)=(B_\ell, \ell-1), \, (F_4, 2);
\\
y_{\ell-1}(n+1) y_{\ell-1}(n+2) & 
(\mg, i)=(C_\ell, \ell);
\\
y_1(n+1)y_1(n+2)y_1(n+3) & 
(\mg, i)=(G_2, 2); 
\\
\displaystyle{\prod_{j: j < i, C_{ij}\neq 0} y_j(n+d_j) \prod_{j:j > i,C_{ij}\neq 0} y_j(n)} & \text{otherwise; }
\end{cases}
\\
\label{a-y}
&X_i(n) = \frac{F_i(n)}{y_i(n) y_{i}(n+d_i)};
\\
\label{eq:def-fsmall}
&P_i(n) := 1 + \sum_{k=0}^{\frac{d'm}{d_i}-2} X_i(n) X_i(n-d_i) \cdots X_i(n-d_i k).
\end{align}

\begin{thm}[Theorem 4.2, \cite{I21}]
\label{thm:WonY-g}
There is an action of $W$ on $\mathcal{Y}_m$
characterized by 
\begin{align}\label{Weyl-y}
r_i(y_j(n)) = 
\begin{cases}
\displaystyle{\frac{P_i(n-2d_i)}{P_i(n-d_i)} y_i(n) X_i(n-d_i)} & j = i,
\\[1mm]
y_j(n) & j \neq i,
\end{cases}
\end{align}
where $i, j \in I$ and $n \in d\Z/d'm\Z$.
\end{thm}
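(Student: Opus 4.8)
The plan is to use the Coxeter presentation of $W$ recalled above: since $W=\langle r_i \mid (r_ir_j)^{m_{ij}}=1\rangle$, it suffices to check that \eqref{Weyl-y} defines, for each $i$, an automorphism of $\mathcal{Y}_m$, and that these automorphisms satisfy $r_i^2=\mathrm{id}$ (the case $i=j$, $m_{ii}=1$) together with the braid relations $(r_ir_j)^{m_{ij}}=\mathrm{id}$ for $i\neq j$. For well-definedness, note that $r_i$ sends every generator $y_j(n)$ to a nonzero element of $\mathcal{Y}_m$ (a ratio of the manifestly nonzero expressions $P_i$ of \eqref{eq:def-fsmall} and Laurent monomials), so it extends uniquely to a field endomorphism; it becomes an automorphism once $r_i^2=\mathrm{id}$ is established. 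Throughout I use that $r_i$ fixes $y_k(n)$ for all $k\neq i$, and hence fixes $F_i(n)$ of \eqref{eq:a-y-g} (which involves only the neighbouring variables $y_k$, $k\neq i$), while it rescales $y_i$.

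For the involution the engine is a transformation rule for $X_i$. Writing $N:=d'm/d_i$ and substituting \eqref{Weyl-y} into \eqref{a-y}, a direct computation using $r_i(F_i(n))=F_i(n)$ gives
\begin{equation*}
r_i(X_i(n))=\frac{P_i(n)}{P_i(n-2d_i)\,X_i(n-d_i)}.
\end{equation*}
Granting this, applying $r_i$ twice to \eqref{Weyl-y} makes the monomial part telescope, and $r_i^2=\mathrm{id}$ reduces to the single functional identity
\begin{equation*}
\frac{r_i(P_i(n-2d_i))}{r_i(P_i(n-d_i))}=\frac{P_i(n-3d_i)\,X_i(n-2d_i)}{P_i(n-2d_i)\,X_i(n-d_i)}.
\end{equation*}
The plan is to prove this by expanding $r_i(P_i(n))=1+\sum_k\prod_j r_i(X_i(n-d_ij))$ with the rule above and resumming the resulting finite geometric-type series; here the cyclic index range $n\in d\Z/d'm\Z$ is essential, because the full-period product $\prod_{j=0}^{N-1}X_i(n-d_ij)$ is independent of $n$ and is exactly what makes the sum close up. (One checks, for instance, that in the shortest case $N=2$ this yields $r_i(P_i(n))=P_i(n-d_i)/X_i(n-d_i)$, from which the displayed ratio follows at once.)

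The commutation relations for non-adjacent $i,j$ (the case $C_{ij}C_{ji}=0$, $m_{ij}=2$) are then immediate: when $\alpha_i,\alpha_j$ are unlinked, $y_j$ does not occur in $F_i,X_i,P_i$ and $y_i$ does not occur in $F_j,X_j,P_j$, so $r_i$ and $r_j$ act on disjoint sets of variables and visibly commute. This reduces the problem to the braid relations $(r_ir_j)^{m_{ij}}=\mathrm{id}$ with $m_{ij}\in\{3,4,6\}$ for adjacent pairs, which I would treat Dynkin-type by Dynkin-type according to the table for $m_{ij}$.

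I expect the braid relations to be the main obstacle. For adjacent $i,j$ the reflection $r_j$ alters $y_j$, which \emph{does} appear in $F_i$, so $r_j(X_i)\neq X_i$ and the clean rule above no longer applies verbatim; one must track how $P_i$ responds to the $r_j$-deformation of $F_i$ and then iterate the composite $r_ir_j$ the required $m_{ij}$ times, the $G_2$ case $m_{ij}=6$ (a word of length twelve) being the most demanding. The most economical route is likely to exploit the cluster-mutation origin of \eqref{Weyl-y}: realize the $y_i(n)$ as the coefficient ($y$-)variables of a $d'm$-periodic quiver, identify $r_i$ with the composite mutation along the vertices of ``column $i$'', and deduce the Coxeter relations from the corresponding identities of mutation sequences, governed locally by the two- or three-column subquiver whose geometry is controlled by $C_{ij}C_{ji}$, rather than by brute-force manipulation of the $P_i$.
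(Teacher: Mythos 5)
The first thing to say is that this paper contains no proof of Theorem \ref{thm:WonY-g} at all: it is imported verbatim from \cite{I21} (Theorem 4.2 there), where the Weyl group relations are established through the cluster realization, i.e.\ by identifying each $r_i$ with a mutation sequence on a periodic quiver and deducing the Coxeter relations from identities of mutation sequences. That is exactly the point at which your proposal stops being a proof: for adjacent $i,j$ you never verify $(r_ir_j)^{m_{ij}}=\mathrm{id}$ for $m_{ij}\in\{3,4,6\}$, you only propose to ``exploit the cluster-mutation origin'' --- which amounts to re-importing the construction of \cite{I21} whose existence is the content of the theorem. Since involutivity together with the unlinked case $m_{ij}=2$ only produces a family of commuting involutions, the braid relations are the sole non-formal part of the statement, and your treatment of them is a pointer, not an argument (no quiver is set up, no identification of $r_i$ with a mutation sequence is made, no periodicity of the quiver under the composite is checked). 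There is also a small technical slip in the well-definedness step: sending generators to \emph{nonzero} elements does not extend a map to the rational function field (substitution into $f/g$ requires $g$ to remain nonzero, i.e.\ algebraic independence of the images); the clean repair is to verify the composite substitution $r_i(r_i(y_j(n)))=y_j(n)$ as an identity of rational functions, which yields invertibility and hence well-definedness simultaneously.

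What you do carry out is correct and in fact coincides with the mutation-free computation this paper gives in \S\ref{app:proof-order2} for the refined operators $r_{i,s}$ (Propositions \ref{lem:Weyl-prefolding} and \ref{prop:r_is_on_f}; for $d_i=d$ one has $r_{i,1}=r_i$). Your transformation rule $r_i(X_i(n))=P_i(n)/\bigl(P_i(n-2d_i)\,X_i(n-d_i)\bigr)$ is \eqref{eq:r-on-X}; your observation that $X_nP_{n-1}-P_n$ equals the $n$-independent full-period product minus one is \eqref{eq:f-f}; and your ``single functional identity'' for the ratio $r_i(P_i(n-2d_i))/r_i(P_i(n-d_i))$ is precisely the ratio form of \eqref{eq:f-prop}, after which the telescoping to $r_i^2(y_i(n))=y_i(n)$ matches the paper's final computation. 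The one methodological difference in this part: you propose to prove the $P_i$-transformation by resumming the geometric-type expansion of $r_i(P_i(n))$, but you only check the shortest case $d'm/d_i=2$; the paper instead proves \eqref{eq:f-prop} by induction on $k$ via the auxiliary identities \eqref{eq:D-X-f} and \eqref{eq:r-f-proof} for the partial sums $D_n^{(k)}$, and your resummation would in effect have to reconstruct that induction. So judged as a standalone proof the proposal is incomplete exactly where the paper itself outsources the work to \cite{I21}; judged as a plan, the route you nominate for the braid relations is indeed the one used in the literature, while your involution argument duplicates the paper's own \S\ref{app:proof-order2}.
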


We are going to discuss 
the $W$-invariant subfield $\mathcal{Y}_m^{W}$ of $\mathcal{Y}_m$.
For $i \in I$, 
we define a subfield $\mathcal{Z}^{(i)}_m$ of $\mathcal{Y}_m$ by 
\begin{align}\label{eq:def-Zim}
\mathcal{Z}^{(i)}_m 
:= \C(z_i(n), y_j(n); j \in I \setminus \{ i \}, n \in d\Z/d'm\Z),
\end{align}
where we put
\begin{align}\label{psi-def}
  z_i(n) :=  
y_i(n) + \frac{F_i(n)}{y_i(n+d_i)}
=y_i(n)(1 + X_i(n)).
\end{align}

\begin{thm}[Corollary of Proposition 4.13, \cite{I21}]
\label{thm:I2-2}
We have $\mathcal{Z}_m^{(i)} \subset \mathcal{Y}_m^{r_i}$
for any $i \in I$, 
where $\mathcal{Y}_m^{r_i}$ is the $r_i$-invariant subfield of $\mathcal{Y}_m$. 
We thus have $\bigcap_{i \in I} \mathcal{Z}^{(i)}_m \subset \mathcal{Y}_m^W$.
\end{thm}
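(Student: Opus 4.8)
The plan is to verify the containment on generators. The field $\mathcal{Z}^{(i)}_m$ is generated over $\C$ by the elements $y_j(n)$ with $j \neq i$ together with the $z_i(n)$, so it suffices to check that each of these generators lies in $\mathcal{Y}_m^{r_i}$. The generators $y_j(n)$ with $j \neq i$ are fixed by $r_i$ immediately from \eqref{Weyl-y}, so the whole problem reduces to the single identity $r_i(z_i(n)) = z_i(n)$ for all $n$.

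To prove this I would first record that $F_i(n)$ is $r_i$-invariant: by \eqref{eq:a-y-g} it is a monomial in the variables $y_j(\cdot)$ with $j \neq i$ only, and these are fixed by $r_i$. Writing $z_i(n) = y_i(n) + F_i(n)/y_i(n+d_i)$ as in \eqref{psi-def} and applying $r_i$, I would substitute the two values
\[
r_i(y_i(n)) = \frac{P_i(n-2d_i)}{P_i(n-d_i)} y_i(n) X_i(n-d_i), \qquad r_i(y_i(n+d_i)) = \frac{P_i(n-d_i)}{P_i(n)} y_i(n+d_i) X_i(n)
\]
coming from \eqref{Weyl-y}. Using $F_i(n) = X_i(n)\, y_i(n)\, y_i(n+d_i)$ from \eqref{a-y}, the second term of $r_i(z_i(n))$ collapses to $y_i(n) P_i(n)/P_i(n-d_i)$, and after factoring out $y_i(n)/P_i(n-d_i)$ the desired equality $r_i(z_i(n)) = y_i(n)\bigl(1 + X_i(n)\bigr)$ becomes the purely algebraic identity
\[
P_i(n-2d_i)\, X_i(n-d_i) + P_i(n) = P_i(n-d_i)\bigl(1 + X_i(n)\bigr).
\]

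The key step, and the part I expect to be the main obstacle, is proving this last identity. I would establish it from a recursion for $P_i$. Extracting the leading factor $X_i(n)$ from the defining sum \eqref{eq:def-fsmall} yields
\[
P_i(n) = 1 - \Pi_i + X_i(n)\, P_i(n-d_i),
\]
where $\Pi_i := \prod_{k=0}^{d'm/d_i - 1} X_i(n - d_i k)$ is the product of $X_i$ over a full period; since shifting $n$ by $d_i$ merely permutes these factors cyclically, $\Pi_i$ is independent of $n$. The subtle point is that $\Pi_i \neq 1$ in general, so this is not the naive recursion. Nevertheless, substituting it at arguments $n$ and $n-d_i$ into the two sides of the identity shows that each side reduces to $P_i(n-d_i) + P_i(n) - 1 + \Pi_i$; in particular the $\Pi_i$-contributions coincide on the two sides, so the identity holds despite $\Pi_i \neq 1$. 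Isolating this cancellation is the crux of the argument.

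Finally, the global statement follows formally. The containment $\mathcal{Z}^{(i)}_m \subset \mathcal{Y}_m^{r_i}$ just proved gives $\bigcap_{i \in I} \mathcal{Z}^{(i)}_m \subset \bigcap_{i \in I} \mathcal{Y}_m^{r_i}$. Since $W = \langle r_i ; i \in I \rangle$, an element fixed by every $r_i$ is fixed by all of $W$, hence $\bigcap_{i \in I} \mathcal{Y}_m^{r_i} = \mathcal{Y}_m^W$, which yields $\bigcap_{i \in I} \mathcal{Z}^{(i)}_m \subset \mathcal{Y}_m^W$ as claimed.
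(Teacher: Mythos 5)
Your proposal is correct, but it takes a genuinely different route from the paper: the paper gives no proof of this statement at all --- it is imported from \cite{I21} (as a corollary of Proposition 4.13 there), where the invariance of the $z_i(n)$ arises as a byproduct of the cluster-mutation construction of the $W$-action. You instead verify invariance directly from \eqref{Weyl-y}, \eqref{a-y}, \eqref{eq:def-fsmall}. Your reductions are all sound ($F_i(n)$ involves only $y_j(\cdot)$ with $j\neq i$, hence is $r_i$-fixed; the second term of $r_i(z_i(n))$ collapses as you say), and the crux identity
\[
P_i(n-2d_i)\,X_i(n-d_i) + P_i(n) \;=\; P_i(n-d_i)\bigl(1+X_i(n)\bigr)
\]
does follow from your recursion $P_i(n) = 1 - \Pi_i + X_i(n)\,P_i(n-d_i)$: substituting it at arguments $n$ and $n-d_i$ turns both sides into $P_i(n)+P_i(n-d_i)-1+\Pi_i$, and $\Pi_i$ is indeed constant along the $d_i$-coset, which is all that is used. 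It is worth noting that your recursion, rewritten as $X_i(n)P_i(n-d_i)-P_i(n)=\Pi_i-1$, is exactly the identity the paper records just before \eqref{eq:f-f} in \S\ref{app:proof-order2}, where it serves the different purpose of proving $r_{i,s}^2=\mathrm{id}$ (Proposition \ref{lem:Weyl-prefolding}); so your argument in effect anticipates machinery the paper deploys later. What your route buys is self-containedness: nothing from \cite{I21} is needed beyond the well-definedness of the action (Theorem \ref{thm:WonY-g}). What the paper's citation buys is brevity and keeping the cluster-algebraic origin of the invariance in view. Your final step --- $\bigcap_i \cZ_m^{(i)} \subset \bigcap_i \cY_m^{r_i} = \cY_m^{W}$ since $W$ is generated by the $r_i$ --- is the same formal deduction the paper intends.
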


\section{Invariant subfield $\mathcal{Y}_m^W$: simply-laced cases}
\label{sect:simplylaced}

\subsection{Main theorem and first reduction}
The goal of this section is the following:

\begin{thm}\label{thm:simply-laced}
Suppose that $\mg$ has a simply-laced Dynkin diagram
(that is, $\mg= A_\ell, ~D_\ell$ or $E_\ell$). 
Then we have $\mathcal{Y}_m^{r_i} = \mathcal{Z}^{(i)}_m$
for any $i \in I$.
Consequently, we have
$\mathcal{Y}_m^W = \bigcap_{i \in I} \mathcal{Z}^{(i)}_m$.
\end{thm}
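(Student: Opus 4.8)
The strategy is to prove the per-reflection statement $\mathcal{Y}_m^{r_i}=\mathcal{Z}^{(i)}_m$ for each $i\in I$; the final assertion then follows formally, since $W$ is generated by the $r_i$ and hence $\mathcal{Y}_m^W=\bigcap_{i\in I}\mathcal{Y}_m^{r_i}$. For a fixed $i$ the inclusion $\mathcal{Z}^{(i)}_m\subset\mathcal{Y}_m^{r_i}$ is already granted by Theorem~\ref{thm:I2-2}, so only the reverse inclusion is at stake. Because $r_i$ is an involution (so $r_i^2=\mathrm{id}$ on $\mathcal{Y}_m$) acting nontrivially --- visibly $r_i(y_i(n))\neq y_i(n)$ from \eqref{Weyl-y} --- Artin's theorem gives $[\mathcal{Y}_m:\mathcal{Y}_m^{r_i}]=2$. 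In the tower $\mathcal{Z}^{(i)}_m\subseteq\mathcal{Y}_m^{r_i}\subseteq\mathcal{Y}_m$ it therefore suffices to prove the single inequality $[\mathcal{Y}_m:\mathcal{Z}^{(i)}_m]\le 2$, as this forces $[\mathcal{Y}_m^{r_i}:\mathcal{Z}^{(i)}_m]=1$.

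To bound this degree I would exploit the defining relations \eqref{psi-def}. Write $K:=\C(y_j(n);\,j\in I\setminus\{i\},\,n\in\Z/m\Z)$, so that $\mathcal{Y}_m=K(y_i(0),\dots,y_i(m-1))$ is purely transcendental of degree $m$ over $K$, while $\mathcal{Z}^{(i)}_m=K(z_i(0),\dots,z_i(m-1))$. In the simply-laced case $d_i=1$ for all $i$, so every factor of $F_i(n)$ in \eqref{eq:a-y-g} carries an index $j\neq i$; hence $F_i(n)\in K$, and \eqref{psi-def} reads $z_i(n)=y_i(n)+F_i(n)/y_i(n+1)$. Solving for the shifted variable yields the fractional-linear recursion
\begin{equation*}
y_i(n+1)=\frac{F_i(n)}{z_i(n)-y_i(n)},\qquad n\in\Z/m\Z,
\end{equation*}
whose coefficients lie in $\mathcal{Z}^{(i)}_m$. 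Iterating from $y_i(0)$ expresses each $y_i(n)$ as a rational function of $y_i(0)$ over $\mathcal{Z}^{(i)}_m$, whence $\mathcal{Y}_m=\mathcal{Z}^{(i)}_m(y_i(0))$.

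Finally I would close the cycle. Encoding the $n$-th step by the matrix $M_n=\left(\begin{smallmatrix}0 & F_i(n)\\ -1 & z_i(n)\end{smallmatrix}\right)\in\mathrm{GL}_2(\mathcal{Z}^{(i)}_m)$, the periodicity $y_i(m)=y_i(0)$ says precisely that $y_i(0)$ is a fixed point of the M\"obius transformation attached to the product $M_{m-1}\cdots M_0=\left(\begin{smallmatrix}a&b\\ c&d\end{smallmatrix}\right)$. This gives the relation $c\,y_i(0)^2+(d-a)\,y_i(0)-b=0$ over $\mathcal{Z}^{(i)}_m$, of degree at most two. A short induction on the partial products $M_k\cdots M_0$ shows that the coefficient $c$ has top term $\pm\,z_i(1)\cdots z_i(m-1)$, hence is nonzero, so the relation is a genuine quadratic and $[\mathcal{Y}_m:\mathcal{Z}^{(i)}_m]=[\mathcal{Z}^{(i)}_m(y_i(0)):\mathcal{Z}^{(i)}_m]\le 2$. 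Combined with the first paragraph this yields $\mathcal{Y}_m^{r_i}=\mathcal{Z}^{(i)}_m$, and therefore $\mathcal{Y}_m^W=\bigcap_{i\in I}\mathcal{Z}^{(i)}_m$.

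The delicate point is the non-degeneracy of the quadratic: one must ensure the composite M\"obius map is not the identity, equivalently that $c\neq 0$, for otherwise the degree argument collapses. This is exactly where the simply-laced hypothesis is essential --- it forces the uniform index shift $n\mapsto n+1$, so that the $m$ relations \eqref{psi-def} assemble into one honest $m$-cycle over $\Z/m\Z$. In the non-simply-laced setting the shift by $d_i$ breaks $d\Z/d'm\Z$ into several sub-cycles and inflates the extension degree, which is the source of the degree-two-or-four phenomenon recorded in Theorem~\ref{thm:non-simply-laced}.
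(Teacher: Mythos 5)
Your reduction steps are sound: $r_i$ is an involution acting nontrivially (the constant term $1$ of $P_i(n-1)$ cannot appear in $X_i(n-1)P_i(n-2)$, so $r_i(y_i(n))\neq y_i(n)$), hence $[\mathcal{Y}_m:\mathcal{Y}_m^{r_i}]=2$; the back-substitution $y_i(n+1)=F_i(n)/(z_i(n)-y_i(n))$ correctly gives $\mathcal{Y}_m=\mathcal{Z}^{(i)}_m(y_i(0))$; and the fixed-point relation $c\,y_i(0)^2+(d-a)\,y_i(0)-b=0$ is valid, since every intermediate M\"obius step has nonvanishing denominator and so the matrix-product formula applies. The genuine gap sits exactly at the point you yourself call delicate: ``$c$ has top term $\pm z_i(1)\cdots z_i(m-1)$, hence is nonzero'' is a non sequitur as written. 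Your induction shows that $c$ is the \emph{evaluation}, at the field elements $z_i(1),\dots,z_i(m-1)$, of a formal polynomial with coefficients in $\cF=\C(F_i(n);n)$ whose top-degree term is $\pm$ the product of the variables; but a nonzero polynomial can perfectly well evaluate to zero unless the evaluation points are algebraically independent over $\cF$. That independence is not free in your periodic setting: the elements $z_i(0),\dots,z_i(m-1),y_i(0)$ \emph{do} satisfy a nontrivial relation (your own quadratic is one), so the independence of the subset $z_i(1),\dots,z_i(m-1)$ has to be proved, not assumed. This is precisely why the paper establishes Lemma \ref{lem:alg-indep} before running any recursion arguments, and why that lemma is stated in the non-periodic ring $\cY_\infty$, where the issue disappears.

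Fortunately the repair is short and uses your own mechanism. Reading \eqref{psi-def} backwards as $y_i(n)=z_i(n)-F_i(n)/y_i(n+1)$ and descending from $y_i(m)=y_i(0)$ shows $y_i(m-1),\dots,y_i(1)\in K\bigl(z_i(1),\dots,z_i(m-1),y_i(0)\bigr)$, where $K=\C(y_j(n);\,j\neq i)$; hence this field is all of $\mathcal{Y}_m$, which has transcendence degree $m$ over $K$, so its $m$ generators $z_i(1),\dots,z_i(m-1),y_i(0)$ form a transcendence basis and in particular $z_i(1),\dots,z_i(m-1)$ are algebraically independent over $K\supset\cF$. With that, your leading-term induction legitimately yields $c\neq 0$. (An alternative: it suffices that the monodromy matrix not be scalar, and for $\mg\neq A_1$ this holds because its determinant $\boldsymbol{F}_i=\prod_n F_i(n)$ is a square-free monomial in the $y_j$, $j\neq i$, hence not a square in $\mathcal{Y}_m$; but this fails for $A_1$, where $\boldsymbol{F}_1=1$, so the independence argument is the uniform one.) Once repaired, your route is correct and genuinely leaner than the paper's: your matrix entries satisfy, up to sign and an index shift, the same continuant recursion \eqref{eq:g1-Ck} as the paper's $\wt{C}^{(k)}$, but where you invoke a fixed-point quadratic, the paper instead computes the extension explicitly, exhibiting the generator $\delta=\boldsymbol{y}_i-\boldsymbol{F}_i/\boldsymbol{y}_i$ with $\cY_\cF=\cZ_\cF(\delta)$, $\delta^2\in\cZ_\cF$ and $r_i(\delta)=-\delta$ (Theorem \ref{thm:g1-delta}); that explicitness is the paper's substitute for your non-degeneracy step, and it is what \S\ref{sect:non-sl} then recycles in the non-simply-laced case, consistently with your closing remark about the sub-cycles $N_{i,s}$.
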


In the rest of this section,
we keep a running assumption that
$\mg$ is associated to a simply-laced Dynkin diagram,
and we fix $i \in I$ and $m>1$.
Recall that we have then $d=d'=d_i=1$ for all $i$,
and hence \eqref{eq:a-y-g} reduces to 
\begin{equation}\label{eq:a-y-g-sl}
F_i(n) = \prod_{j: j < i, C_{ij}\neq 0} y_j(n+1) \prod_{j:j > i,C_{ij}\neq 0} y_j(n).
\end{equation}

We define three subfields of $\mathcal{Y}_m$ as follows
(see \eqref{eq:a-y-g}, \eqref{psi-def}):
\begin{equation}
\label{eq:def-cF}
\begin{aligned}
&\cF := \C(F_i(n); ~n \in \Z/m\Z), \\
&\cY_\cF := \cF(y_i(n); ~n \in \Z/m\Z), \\
&\cZ_\cF := \cF(z_i(n); ~n \in \Z/m\Z).
\end{aligned}
\end{equation}
Observe that we have
\begin{equation}\label{eq:Xi-cYcF}
X_i(n), ~P_i(n) \in \cY_\cF
\end{equation}
for all $n \in \Z/m\Z$ by \eqref{a-y} and \eqref{eq:def-fsmall}.

\begin{lem}\label{lem:ri-Yf}
The restriction of $r_i$ to $\cZ_\cF$ is the identity,
and
we have $r_i(\cY_\cF) \subset \cY_\cF$.
\end{lem}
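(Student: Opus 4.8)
The plan is to treat the two assertions separately, dispatching the inclusion $r_i(\cY_\cF)\subset\cY_\cF$ first (which is essentially formal) and reserving the real work for the claim that $r_i$ restricts to the identity on $\cZ_\cF$. As a preliminary observation I would record that $r_i$ fixes $\cF$ elementwise: by \eqref{eq:a-y-g-sl} each $F_i(n)$ is a monomial in the variables $y_j(n')$ with $j\neq i$, and all of these are fixed by $r_i$ according to \eqref{Weyl-y}, so $r_i(F_i(n))=F_i(n)$ for every $n$.

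For the inclusion $r_i(\cY_\cF)\subset\cY_\cF$, since $r_i$ is a field automorphism of $\mathcal{Y}_m$ (by Theorem \ref{thm:WonY-g}) fixing $\cF$, it suffices to check that each remaining generator $y_i(n)$ is sent into $\cY_\cF$. This is immediate from the $j=i$ case of \eqref{Weyl-y} with $d_i=1$, namely $r_i(y_i(n))=\frac{P_i(n-2)}{P_i(n-1)}\,y_i(n)\,X_i(n-1)$, together with \eqref{eq:Xi-cYcF}, which guarantees $X_i(n-1),P_i(n-2),P_i(n-1)\in\cY_\cF$.

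The heart of the matter is to show $r_i(z_i(n))=z_i(n)$ for all $n$. I would compute directly from $z_i(n)=y_i(n)+F_i(n)/y_i(n+1)$, using $r_i(F_i(n))=F_i(n)$ and substituting the explicit images of $y_i(n)$ and $y_i(n+1)$ from \eqref{Weyl-y}. Simplifying the second summand via the relation $F_i(n)=X_i(n)\,y_i(n)\,y_i(n+1)$ coming from \eqref{a-y}, the expression should collapse to
$$r_i(z_i(n))=\frac{y_i(n)}{P_i(n-1)}\bigl(P_i(n-2)\,X_i(n-1)+P_i(n)\bigr).$$
Thus the whole statement reduces to the single algebraic identity
$$P_i(n-2)\,X_i(n-1)+P_i(n)=P_i(n-1)\,\bigl(1+X_i(n)\bigr),$$
after which $r_i(z_i(n))=y_i(n)(1+X_i(n))=z_i(n)$ by \eqref{psi-def}.

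The main obstacle, and the only nonformal step, is proving this identity, which I expect to come from a recursion for the $P_i(n)$. Expanding $X_i(n)\,P_i(n-1)$ from the definition \eqref{eq:def-fsmall} (where now $d'm/d_i-2=m-2$) produces the sum of all products $X_i(n)X_i(n-1)\cdots X_i(n-k)$ of consecutive factors of lengths $1$ through $m$; the products of length at most $m-1$ together make up $P_i(n)-1$, while the single length-$m$ product $X_i(n)X_i(n-1)\cdots X_i(n-(m-1))$ wraps around a full period. Here the periodicity $n\in\Z/m\Z$ is essential: this product equals $M:=\prod_{k\in\Z/m\Z}X_i(k)$, which is independent of $n$. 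This yields the recursion
$$X_i(n)\,P_i(n-1)=P_i(n)-1+M,$$
and substituting it, together with its shift $n\mapsto n-1$, into both sides of the identity above shows that each side equals $P_i(n-1)+P_i(n)-1+M$, the $n$-independent term $M$ cancelling. This completes the plan.
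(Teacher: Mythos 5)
Your proof is correct, but it takes a genuinely different route from the paper on the key point. For the inclusion $r_i(\cY_\cF)\subset\cY_\cF$ and for $r_i|_\cF=\mathrm{id}$ you argue exactly as the paper does. However, for the invariance of the $z_i(n)$ the paper simply cites Theorem \ref{thm:I2-2} (i.e.\ the inclusion $\cZ_m^{(i)}\subset\cY_m^{r_i}$ imported from \cite{I21}), making its proof a two-line reduction, whereas you re-prove that invariance from scratch: you substitute the explicit formulas \eqref{Weyl-y} into $z_i(n)=y_i(n)+F_i(n)/y_i(n+1)$, reduce everything to the identity
\begin{equation*}
P_i(n-2)\,X_i(n-1)+P_i(n)=P_i(n-1)\bigl(1+X_i(n)\bigr),
\end{equation*}
and establish it via the recursion $X_i(n)P_i(n-1)=P_i(n)-1+M$ with $M=\prod_{k\in\Z/m\Z}X_i(k)$, which crucially uses the $m$-periodicity to make the wrap-around product $n$-independent. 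Your computation is accurate (I checked the expansion of $X_i(n)P_i(n-1)$ into consecutive products of lengths $1$ through $m$, and the collapse $r_i(z_i(n))=y_i(n)(1+X_i(n))=z_i(n)$), and it makes the argument self-contained modulo only Theorem \ref{thm:WonY-g}, at the cost of length. It is worth noting that your recursion is essentially the relation \eqref{eq:f-f} that the paper itself introduces later, in \S\ref{app:proof-order2}, to prove Proposition \ref{lem:Weyl-prefolding}; so your proof in effect recovers the special case of the cited result of \cite{I21} by the same mechanism the paper deploys in the non-simply-laced setting, which is a reasonable trade: transparency and independence from \cite{I21} versus the brevity of a citation.
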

\begin{proof}
We have
$r_i(F_i(n))=F_i(n)$ for any $n \in \Z/m\Z$
by \eqref{eq:a-y-g-sl}
and by the second case of \eqref{Weyl-y}.
Hence
the first statement follows from Theorem \ref{thm:I2-2}.
It remains to prove $r_i(y_i(n)) \in \cY_\cF$,
but this is immediate from 
\eqref{eq:def-fsmall}, \eqref{Weyl-y} and \eqref{eq:Xi-cYcF}.
\end{proof}

We summarize the relations of the fields in a diagram: 
\begin{equation}\label{eq:diag-fld}
\xymatrix{
\cZ_m^{(i)} \ar@{}[r]|*{\subset} \ar@{}[d]|{\bigcup} &
\cY_m^{r_i} \ar@{}[r]|*{\subset} \ar@{}[d]|{\bigcup} &
\cY_m  \ar@{}[d]|{\bigcup} 
\\
\cZ_\cF \ar@{}[r]|*{\subset} &
\cY_\cF^{r_i} \ar@{}[r]|*{\subset}  &
\cY_\cF,
}
\end{equation}
where $\mathcal{Y}_\cF^{r_i}$ is
the $r_i$-invariant subfield of $\cY_\cF$.
Here we make a first reduction:

\begin{lem}\label{lem:reduction}
An equality
\begin{equation}\label{prop:mid-step}
[\cY_\cF : \cZ_\cF]=2
\end{equation}
implies Theorem \ref{thm:simply-laced}.
\end{lem}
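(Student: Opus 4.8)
The plan is to prove Lemma~\ref{lem:reduction} by a degree-counting squeeze on the tower $\cZ_m^{(i)} \subseteq \cY_m^{r_i} \subseteq \cY_m$, turning the hypothesis $[\cY_\cF:\cZ_\cF]=2$ into an \emph{upper} bound on $[\cY_m:\cZ_m^{(i)}]$ and reading off a matching \emph{lower} bound from the reflection $r_i$. The first move is to isolate the variables other than $i$. Set $L := \C(y_j(n); j\in I\setminus\{i\}, n\in\Z/m\Z)$. Since $r_i$ fixes every $y_j(n)$ with $j\neq i$ by the second case of \eqref{Weyl-y}, it fixes $L$ pointwise, so $L \subseteq \cY_m^{r_i}$. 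By \eqref{eq:a-y-g-sl} each $F_i(n)$ lies in $L$, hence $\cF \subseteq L$; consequently
$$\cY_m = L\big(y_i(n); n\in\Z/m\Z\big), \qquad \cZ_m^{(i)} = L\big(z_i(n); n\in\Z/m\Z\big),$$
and since the $y_i(n)$ are algebraically independent over $L$, the field $\cY_m$ is a rational function field over $L$ in the variables $y_i(n)$, with $\cY_m = \cZ_m^{(i)}\big(y_i(n); n\big)$.

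For the lower bound I would invoke Artin's theorem. The relation $m_{ii}=1$ gives $r_i^2=1$, and by the first case of \eqref{Weyl-y} the action of $r_i$ on $\cY_m$ is nontrivial, so $\langle r_i\rangle$ is a group of order two acting faithfully on $\cY_m$; hence $[\cY_m:\cY_m^{r_i}]=2$. Combined with the inclusion $\cZ_m^{(i)}\subseteq\cY_m^{r_i}$ from Theorem~\ref{thm:I2-2}, this yields $[\cY_m:\cZ_m^{(i)}] = 2\,[\cY_m^{r_i}:\cZ_m^{(i)}] \ge 2$, once the degree is known to be finite.

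For the upper bound I would compare $\cY_m/\cZ_m^{(i)}$ with $\cY_\cF/\cZ_\cF$ by base change over $\cZ_\cF$. Observe that $\cZ_\cF = \cF(z_i(n)) \subseteq L(z_i(n)) = \cZ_m^{(i)}$, and that $\cY_m$ is the compositum $\cY_\cF\cdot\cZ_m^{(i)}$ inside $\cY_m$ (both sides contain $L$, all $y_i(n)$ and all $z_i(n)$, using $\cF\subseteq L$). The elementary compositum inequality $[K_1K_2:K_2]\le[K_1:F]$, applied with $F=\cZ_\cF$, $K_1=\cY_\cF$ and $K_2=\cZ_m^{(i)}$, then gives $[\cY_m:\cZ_m^{(i)}]\le[\cY_\cF:\cZ_\cF]=2$ by the hypothesis. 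In particular the degree is finite, so together with the lower bound we obtain $[\cY_m:\cZ_m^{(i)}]=2=[\cY_m:\cY_m^{r_i}]$, which forces $\cY_m^{r_i}=\cZ_m^{(i)}$. Finally, since $W=\langle r_i; i\in I\rangle$ gives $\cY_m^W=\bigcap_{i\in I}\cY_m^{r_i}$, running this argument for every $i$ yields $\cY_m^W=\bigcap_{i\in I}\cZ_m^{(i)}$, the second assertion of Theorem~\ref{thm:simply-laced}.

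The main obstacle is the upper-bound step: one must arrange the four fields so that the compositum-degree inequality genuinely applies, i.e.\ verify both the inclusion $\cZ_\cF\subseteq\cZ_m^{(i)}$ and the compositum identity $\cY_m=\cY_\cF\cdot\cZ_m^{(i)}$, each of which rests on $F_i(n)\in L$ and on the $y_i(n)$ being transcendental over $L$. Once this is in place, the reduction localizes all remaining difficulty into the single numerical input $[\cY_\cF:\cZ_\cF]=2$, which is precisely what the rest of the section must establish.
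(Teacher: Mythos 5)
Your proof is correct and takes essentially the same route as the paper: the compositum inequality $[\cY_m:\cZ_m^{(i)}]\le[\cY_\cF:\cZ_\cF]=2$, the equality $[\cY_m:\cY_m^{r_i}]=2$ from $r_i$ having order two, and the inclusion $\cZ_m^{(i)}\subseteq\cY_m^{r_i}$ from Theorem \ref{thm:I2-2} are squeezed to force $\cZ_m^{(i)}=\cY_m^{r_i}$. The only cosmetic difference is that you squeeze degrees directly at the level of $\cY_m$ (and make explicit, via the field $L$, the verifications of $\cZ_\cF\subseteq\cZ_m^{(i)}$ and $\cY_m=\cY_\cF\cdot\cZ_m^{(i)}$ that the paper absorbs into ``by definition''), whereas the paper first deduces $\cZ_\cF=\cY_\cF^{r_i}$ in the bottom row of \eqref{eq:diag-fld} and then passes upward.
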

\begin{proof}
We have 
$[\cY_m  : \cZ_m^{(i)}] \le [\cY_\cF : \cZ_\cF]$
since
$\cY_m$ is the composition field of $\cY_\cF$ and $\cZ_m^{(i)}$
by definition.
On the other hand,
we have 
$[\cY_\cF : \cY_\cF^{r_i}]=[\cY_m : \cY_m^{r_i}]=2$
because $r_i$ is of order two.
Therefore
\eqref{prop:mid-step} implies $\cZ_\cF=\cY_\cF^{r_i}$
and hence $\cZ_m^{(i)}=\cY_m^{r_i}$.
\end{proof}

\subsection{The proof}
In order to prove \eqref{prop:mid-step}, we introduce the Laurent polynomial ring
\[
\mathcal{Y}_\infty := \cF[\wt{y}_i(n)^{\pm 1}; n \in \Z]
\]
on the set of commuting variables 
$\wt{y}_i(n)$ on $n \in \Z$ over $\cF$.
We also introduce its $\cF$-subalgebra
\begin{equation}\label{psi-def3}
\mathcal{Z}_\infty:= \cF[\wt{z}_i(n);~ n \in \Z] \subset \cY_\infty,
\qquad
  \wt{z}_i(n) = \wt{y}_i(n) + \frac{F_i(n \bmod m)}{\wt{y}_i(n+1)} .
\end{equation}

\begin{lem}\label{lem:alg-indep}
The set $\{ \wt{z}_i(n) ;~ n \in \Z \}$ 
is algebraically independent over $\cF$. 
In particular, 
$\cZ_\infty$ is a polynomial ring over $\cF$.
\end{lem}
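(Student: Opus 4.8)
The plan is to reduce the statement about an infinite family to a statement about finite intervals, and then to prove algebraic independence over $\cF$ by a Jacobian computation. Since an infinite set is algebraically independent precisely when each of its finite subsets is, and since every finite subset of $\{\wt{z}_i(n); n \in \Z\}$ is contained in an interval, it suffices to fix integers $a \le b$ and show that $\wt{z}_i(a), \dots, \wt{z}_i(b)$ are algebraically independent over $\cF$. Writing $c_n := F_i(n \bmod m) \in \cF$, so that $\wt{z}_i(n) = \wt{y}_i(n) + c_n \wt{y}_i(n+1)^{-1}$, the decisive structural feature is the \emph{chain} shape: $\wt{z}_i(n)$ involves only the two consecutive variables $\wt{y}_i(n)$ and $\wt{y}_i(n+1)$, so that $\wt{y}_i(a)$ appears (at the bottom) only in $\wt{z}_i(a)$ and $\wt{y}_i(b+1)$ appears (at the top) only in $\wt{z}_i(b)$.

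First I would record that these $r := b-a+1$ functions lie in the purely transcendental extension $\cF(\wt{y}_i(a), \dots, \wt{y}_i(b+1))$ of $\cF$, and that $\cF \supset \C$ has characteristic zero; hence by the Jacobian criterion it is enough to show that the $r \times (r+1)$ matrix $J = \big(\partial \wt{z}_i(n)/\partial \wt{y}_i(k)\big)$, with rows $n \in \{a,\dots,b\}$ and columns $k \in \{a,\dots,b+1\}$, has rank $r$. A direct computation gives $\partial \wt{z}_i(n)/\partial \wt{y}_i(n) = 1$ and $\partial \wt{z}_i(n)/\partial \wt{y}_i(n+1) = -c_n \wt{y}_i(n+1)^{-2}$, all other entries being zero. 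Deleting the last column of $J$ leaves an upper bidiagonal $r \times r$ matrix with $1$'s on the diagonal, whose determinant is $1 \ne 0$; therefore $\mathrm{rank}\, J = r$ and the $\wt{z}_i(n)$ are algebraically independent over $\cF$.

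If one prefers a self-contained argument avoiding the Jacobian criterion, the same conclusion follows by induction on $b-a$: given a relation $P(\wt{z}_i(a), \dots, \wt{z}_i(b)) = 0$ with $P \in \cF[t_a, \dots, t_b]$, one expands the left-hand side as a Laurent polynomial in $\wt{y}_i(b+1)$, which enters only through $\wt{z}_i(b)$, and reads off that the coefficient of the lowest power $\wt{y}_i(b+1)^{-\deg_{t_b} P}$ equals $c_b^{\deg_{t_b}P}$ times the leading $t_b$-coefficient of $P$ evaluated at $(\wt{z}_i(a), \dots, \wt{z}_i(b-1))$; since $c_b \ne 0$ this forces that leading coefficient to vanish, and induction finishes. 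Either way the sole genuine content is the triangular chain structure — each $\wt{z}_i(n)$ carries a private variable not shared with the others — so I expect no real obstacle; the only points needing care are the reduction to finite intervals and the bookkeeping of the Laurent denominators $\wt{y}_i(n+1)$. The final assertion that $\cZ_\infty$ is a polynomial ring over $\cF$ is then immediate, an $\cF$-algebra generated by an algebraically independent set being a polynomial ring by definition.
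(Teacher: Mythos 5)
Your proof is correct, but it takes a genuinely different route from the paper's. The paper also reduces to finite windows, but instead of differentiating it observes that the two sets $\{ \wt{z}_i(n) ; -N \le n \le N \} \cup \{ \wt{y}_i(0) \}$ and $\{ \wt{y}_i(n) ; -N \le n \le N+1 \}$ generate the same subfield of the fraction field of $\cY_\infty$ over $\cF$ --- one recovers the $\wt{y}_i(n)$ recursively from the single seed $\wt{y}_i(0)$ and the $\wt{z}_i(n)$, going up via $\wt{y}_i(n+1) = F_i(n \bmod m)/(\wt{z}_i(n)-\wt{y}_i(n))$ and down via $\wt{y}_i(n) = \wt{z}_i(n) - F_i(n \bmod m)/\wt{y}_i(n+1)$ --- and then concludes by counting: the two sets have the same finite cardinality, the second is algebraically independent, so the first must be as well, since otherwise the transcendence degree over $\cF$ of the common field would drop. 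Your Jacobian argument replaces this field-generation observation with a rank computation: it is more mechanical (no need to notice that the chain of relations can be inverted) and rests only on the characteristic-zero Jacobian criterion together with the upper bidiagonal shape of the matrix, while your inductive variant via the lowest Laurent coefficient in $\wt{y}_i(b+1)$ is fully self-contained. What the paper's count buys is brevity and the avoidance of any derivative or degree bookkeeping; what yours buys is robustness, since it applies verbatim whenever each generator carries a private variable appearing with nonzero derivative. Both arguments exploit the same chain structure, and both use that $F_i(n) \neq 0$ in $\cF$ (you to invert $c_b$, the paper to solve for $\wt{y}_i(n+1)$), so there is no gap in your proposal.
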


\begin{proof}
The set
$\{ \wt{y}_i(n) ; n \in \Z \}$ 
is algebraically independent over $\cF$
by definition.
On the other hand, 
it follows from \eqref{psi-def3} that 
for any $N>0$ the two sets
\[ \{ \wt{z}_i(n) ; -N \le n \le N \} \cup \{ \wt{y}_i(0) \}
\qquad \text{and} \qquad
   \{ \wt{y}_i(n) ; -N \le n \le N+1 \}
\]
generate (over $\cF$) the same subfield in
the fraction field of $\mathcal{Y}_\infty$.
Since the two sets have the same cardinality,
the first is algebraically independent over $\cF$ as well.
We are done.
\end{proof}

\begin{definition}\label{lem:shift-op}
\begin{enumerate}
\item 
Let
$\tau_m : \cY_m \to \cY_m$ 
be a $\C$-algebra automorphism
characterized by
$\tau_m y_j(n) = y_j(n+1)$
for any $(j, n) \in I \times \Z/m\Z$.
We have (see \eqref{eq:def-cF})
\[ \tau_m(\cZ_m) = \cZ_m,
\qquad
  \tau_m(\cF) = \cF,
\qquad
  \tau_m(\cY_\cF) = \cY_\cF,
\qquad
  \tau_m(\cZ_\cF) = \cZ_\cF.
\]
We denote by $\tau_\cF : \cF \to \cF$
the restriction of $\tau_m$.
\item
Let
$\tau_\infty : \cY_\infty \to \cY_\infty$
be a $\C$-algebra automorphism
characterized by
$\tau_\infty \wt{y}_i(n) = \wt{y}_i(n+1)$
for any $n \in  \Z$
and $\tau_\infty|_{\cF}=\tau_\cF$.
We have $\tau_\infty(\cZ_\infty) = \cZ_\infty$.
\item
Let
$\pi : \mathcal{Y}_\infty \to \mathcal{Y}_\cF$
be a $\cF$-algebra homomorphism
characterized by
$\pi \wt{y}_i(n)=y_i(n \bmod m)$
for all $n \in \Z$.
We have $\pi(\cZ_\infty) \subset \cZ_\cF$ and
$\tau_m \circ \pi = \pi \circ \tau_\infty$.
\end{enumerate}
\end{definition}

We define polynomials $\wt{A}^{(k)}, \wt{C}^{(k)}$ in $\cZ_\infty$ as follows. First we define $\wt{C}^{(k)}$ for $k \in \Z_{>0}$ by
\begin{align}
  \label{eq:g1-C12}
  &\wt{C}^{(1)} = 1, ~~ \wt{C}^{(2)} = \wt{z}_i(2),
  \\
  \label{eq:g1-Ck}
  &\wt{C}^{(k)} = \wt{z}_i(k) \wt{C}^{(k-1)} - F_i(k-1 \bmod m)\wt{C}^{(k-2)} \quad (k \ge 3).
\end{align} 
Next, for $k \geq 2$ we define $\wt{A}^{(k)}$ as
\begin{align}
\label{eq:g1-A}
  &\wt{A}^{(k)} = \wt{z}_i(1) \wt{C}^{(k)} - F_i(k \bmod m) \wt{C}^{(k-1)} - \tau_{\infty} (F_i(k \bmod m) C^{(k-1)}). 
\end{align}

We define elements of $\cY_\infty$ 
for $k \geq 2$ and $n \in \Z$ by 
\begin{equation}\label{eq:g1-def-small-f}
\wt{D}_n^{(k)} = 
1 + \sum_{p=0}^{k-2} \wt{X}_i(n)\wt{X}_i(n-1)\cdots \wt{X}_i(n-p),
\quad
\wt{X}_i(n) = \frac{F_i(n \bmod m)}{\wt{y}_i(n) \wt{y}_i(n+1)}.
\end{equation} 
Note that it is satisfied that
\begin{align}\label{eq:g1-f}
& \wt{D}_n^{(k)} = 1 + \wt{X}_i(n) \wt{D}_{n-1}^{(k-1)} 
= \wt{D}_{n}^{(k-1)} + \wt{X}_i(n) \wt{X}_i(n-1) \cdots \wt{X}_i(n-k+2),
\\
\label{eq:g1-f2}
&\wt{z}_i(n) = (1+ \wt{X}_i(n)) \wt{y}_i(n).
\end{align}

\begin{lem}\label{lem:g1-C-f}
It is satisfied that 
$\wt{C}^{(k)} 
= \wt{D}_k^{(k)} \wt{y}_i(2) \wt{y}_i(3)\cdots \wt{y}_i(k)$ 
in $\cY_\infty$. 
\end{lem}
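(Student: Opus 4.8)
Lemma~\ref{lem:g1-C-f} asserts the identity
\[
\wt{C}^{(k)} = \wt{D}_k^{(k)} \, \wt{y}_i(2)\wt{y}_i(3)\cdots\wt{y}_i(k)
\]
in $\cY_\infty$, where the left-hand side is defined by the three-term recursion \eqref{eq:g1-C12}--\eqref{eq:g1-Ck} and the right-hand side involves the partial sums $\wt{D}_k^{(k)}$ of \eqref{eq:g1-def-small-f}.

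\textbf{Plan of proof.} The plan is to proceed by induction on $k$, using the recursion \eqref{eq:g1-Ck} for $\wt{C}^{(k)}$ and the recursion \eqref{eq:g1-f} for $\wt{D}_k^{(k)}$ in tandem. First I would verify the base cases $k=1,2$ directly. For $k=2$ one checks that $\wt{D}_2^{(2)} = 1 + \wt{X}_i(2)$, so that $\wt{D}_2^{(2)}\wt{y}_i(2) = (1+\wt{X}_i(2))\wt{y}_i(2) = \wt{z}_i(2) = \wt{C}^{(2)}$ by \eqref{eq:g1-f2} and \eqref{eq:g1-C12}; this is the key observation that fixes the normalization of the product $\wt{y}_i(2)\cdots\wt{y}_i(k)$. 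For the inductive step, I would substitute the inductive hypothesis for $\wt{C}^{(k-1)}$ and $\wt{C}^{(k-2)}$ into the right-hand side of \eqref{eq:g1-Ck}, obtaining
\[
\wt{C}^{(k)} = \wt{z}_i(k)\,\wt{D}_{k-1}^{(k-1)}\,\wt{y}_i(2)\cdots\wt{y}_i(k-1) - F_i(k-1 \bmod m)\,\wt{D}_{k-2}^{(k-2)}\,\wt{y}_i(2)\cdots\wt{y}_i(k-2),
\]
and then factor out the product $\wt{y}_i(2)\cdots\wt{y}_i(k)$.

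\textbf{Key algebraic step.} The heart of the computation is to show that the bracketed expression, after dividing by $\wt{y}_i(2)\cdots\wt{y}_i(k)$, equals $\wt{D}_k^{(k)}$. Using \eqref{eq:g1-f2} to write $\wt{z}_i(k) = (1+\wt{X}_i(k))\wt{y}_i(k)$, the first term contributes $(1+\wt{X}_i(k))\wt{D}_{k-1}^{(k-1)}$; the second term, upon substituting the definition $\wt{X}_i(k-1) = F_i(k-1\bmod m)/(\wt{y}_i(k-1)\wt{y}_i(k))$ from \eqref{eq:g1-def-small-f}, contributes $-\wt{X}_i(k)\wt{X}_i(k-1)\,\wt{D}_{k-2}^{(k-2)}$ after matching factors. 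I expect the two $\wt{D}$-terms to recombine through the recursion \eqref{eq:g1-f}: writing $\wt{D}_{k-1}^{(k-1)} = \wt{D}_{k-1}^{(k-2)} + \wt{X}_i(k-1)\cdots\wt{X}_i(k-1)$ (the telescoping second form in \eqref{eq:g1-f}) should make the cross terms cancel, leaving exactly $1 + \wt{X}_i(k)\wt{D}_{k-1}^{(k-1)} = \wt{D}_k^{(k)}$.

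\textbf{Anticipated obstacle.} The main obstacle is purely bookkeeping: keeping the index ranges of the telescoping sums $\wt{D}^{(\bullet)}_\bullet$ exactly aligned so that the subtraction in \eqref{eq:g1-Ck} cancels the correct tail and no stray term survives. The two equivalent forms of the recursion in \eqref{eq:g1-f} play complementary roles here—one peels off the leading factor $\wt{X}_i(n)$, the other peels off the trailing product—and I would need to invoke the right form at each point, being careful that $F_i(k-1\bmod m)$ in \eqref{eq:g1-Ck} is precisely the numerator of $\wt{X}_i(k-1)$ so that the product $\wt{y}_i(2)\cdots\wt{y}_i(k)$ factors cleanly. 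No deeper difficulty is expected; once the base case normalization and the two forms of \eqref{eq:g1-f} are in hand, the induction should close.
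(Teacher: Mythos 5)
Your plan is the same as the paper's: induct on $k$, anchor the normalization with $\wt{D}_2^{(2)}\wt{y}_i(2)=\wt{z}_i(2)=\wt{C}^{(2)}$, and show the right-hand side $G_k:=\wt{D}_k^{(k)}\wt{y}_i(2)\cdots\wt{y}_i(k)$ satisfies the recursion \eqref{eq:g1-Ck}. However, the key algebraic step as you state it is wrong, and with it the induction does not close. After dividing by $\wt{y}_i(2)\cdots\wt{y}_i(k)$, the second term of \eqref{eq:g1-Ck} contributes
\[
-\frac{F_i(k-1 \bmod m)}{\wt{y}_i(k-1)\,\wt{y}_i(k)}\,\wt{D}_{k-2}^{(k-2)}
= -\wt{X}_i(k-1)\,\wt{D}_{k-2}^{(k-2)},
\]
\emph{not} $-\wt{X}_i(k)\wt{X}_i(k-1)\,\wt{D}_{k-2}^{(k-2)}$ as you claim: the numerator $F_i(k-1\bmod m)$ is absorbed exactly by the two factors $\wt{y}_i(k-1)\wt{y}_i(k)$ missing from the shorter product, and no extra $\wt{X}_i(k)$ appears. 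This matters, because with your term one gets, using $\wt{D}_{k-1}^{(k-1)}=1+\wt{X}_i(k-1)\wt{D}_{k-2}^{(k-2)}$,
\[
(1+\wt{X}_i(k))\wt{D}_{k-1}^{(k-1)}-\wt{X}_i(k)\wt{X}_i(k-1)\wt{D}_{k-2}^{(k-2)}
=\wt{D}_{k-1}^{(k-1)}+\wt{X}_i(k),
\]
which is not $\wt{D}_k^{(k)}=1+\wt{X}_i(k)\wt{D}_{k-1}^{(k-1)}$ in general, so the claimed cancellation "leaving exactly $1+\wt{X}_i(k)\wt{D}_{k-1}^{(k-1)}$" does not follow from the pieces you wrote down. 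With the correct term the induction does close, via the identity
\[
(1+\wt{X}_i(k))\wt{D}_{k-1}^{(k-1)}
=\wt{D}_k^{(k)}+\wt{X}_i(k-1)\wt{D}_{k-2}^{(k-2)},
\]
obtained by applying the \emph{first} formula of \eqref{eq:g1-f} twice; the second (telescoping) form you invoke is not needed. This corrected computation is precisely the paper's proof.

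A smaller point: since \eqref{eq:g1-Ck} is a three-term recursion, your inductive step at $k=3$ invokes the statement for $k=1$, but $\wt{D}_n^{(k)}$ is defined in \eqref{eq:g1-def-small-f} only for $k\geq 2$. You must either adopt the empty-sum convention $\wt{D}_n^{(1)}=1$ (making your base cases $k=1,2$ legitimate), or do as the paper does: verify $k=2$ and $k=3$ directly and start the recursion argument at $k\geq 4$.
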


\begin{proof}
Write $G_k$ for the r.h.s. in the statement.
We prove $\wt{C}^{(k)}=G_k$ by induction on $k$.
When $k=2$, we have
$$
  G_2=
\wt{D}_2^{(2)} \wt{y}_i(2) = (1+\wt{X}_i(2)) \wt{y}_i(2)
\overset{\eqref{eq:g1-f2}}{=} \wt{z}_i(2) = \wt{C}^{(2)}.
$$
When $k=3$, we have 
\begin{align*}
G_3&=\wt{D}_3^{(3)} \wt{y}_i(2) \wt{y}_i(3)
  = (1+\wt{X}_i(3)+ \wt{X}_i(3)\wt{X}_i(2))\wt{y}_i(2) \wt{y}_i(3),\\
  \wt{C}^{(3)} &= \wt{z}_i(3) \wt{z}_i(2) - F_i(2 \bmod m) \quad (\text{from \eqref{eq:g1-C12} and \eqref{eq:g1-Ck}})  
  \\
  &= (1+\wt{X}_i(3))\wt{y}_i(2) (1+\wt{X}_i(2))\wt{y}_i(2) - F_i(2 \bmod m) \quad (\text{from \eqref{eq:g1-f2}})
  \\
  &= (1+\wt{X}_i(3)+ \wt{X}_i(3) \wt{X}_i(2))\wt{y}_i(2) \wt{y}_i(2) + \wt{X}_i(2)\wt{y}_i(2) \wt{y}_i(3) - F_i(2 \bmod m). 
\end{align*}
The last two terms vanish due to the second formula of \eqref{eq:g1-def-small-f}, and the claim is shown.
For $k \geq 4$ we prove that $G_k$ satisfies the same recurrence formula \eqref{eq:g1-Ck} as $\wt{C}^{(k)}$. 
By using the first formula of \eqref{eq:g1-f} twice, we obtain 
\[
(1+\wt{X}_i(k))\wt{D}^{(k-1)}_{k-1}
=
\wt{D}^{(k)}_{k} + \wt{X}_i(k-1)\wt{D}^{(k-2)}_{k-2}.
\]
It then follows from \eqref{eq:g1-f2} that
\begin{align*}
\wt{z}_i(k) G_{k-1}
&= 
(1+\wt{X}_i(k))\wt{y}_i(k) \cdot
\wt{D}^{(k-1)}_{k-1}
\wt{y}_i(2) \wt{y}_i(3) \cdots \wt{y}_i(k-1)
\\
&= 
\left(\wt{D}^{(k)}_{k} + \wt{X}_i(k-1)\wt{D}^{(k-2)}_{k-2} \right)
\wt{y}_i(2) \wt{y}_i(3) \cdots \wt{y}_i(k).
\end{align*}
On the other hand, by \eqref{eq:g1-def-small-f} we get
\[  F_i(k-1 \bmod m)G_{k-2}
= \wt{y}_i(k-1) \wt{y}_i(k) \wt{X}_i(k-1) \cdot
\wt{D}^{(k-2)}_{k-2}
\wt{y}_i(2) \wt{y}_i(3) \cdots \wt{y}_i(k-2).
\]
Combined, we arrive at the desired formula
\begin{align*}
\wt{z}_i(k) G_{k-1} - F_i(k-1 \bmod m) G_{k-2}
&= \wt{y}_i(2) \wt{y}_i(3) \cdots \wt{y}_i(k) \wt{D}_{k}^{(k)} = G_k,
\end{align*}    
and the claim follows.
\end{proof}

For $n \in \Z$ and $2 \leq k \leq m$, we define
\[ 
A^{(k)}:=\pi(\wt{A}^{(k)}) \in \cZ_\cF,~
\quad
C^{(k)}:=\pi(\wt{C}^{(k)}) \in \cZ_\cF,
\quad
D_n^{(k)}:=\pi(\wt{D}_n^{(k)}) \in \cY_\cF,
\] 
where
$\pi : \cY_\infty \to \cY_\cF$
is from Definition \ref{lem:shift-op} (3).
We also define two elements by
\[
\boldsymbol{y}_i := \prod_{p \in \Z/m\Z} y_i(p) \in \cY_\cF,
\qquad
\boldsymbol{F}_i:= \prod_{n \in \Z/m\Z} F_i(n) \in \cF.
\]
Notice that by \eqref{a-y} and \eqref{eq:def-fsmall} we have
\begin{align}
\pi(\wt{X}_i(n))=X_i(n \bmod m).
\end{align}

\begin{thm}\label{thm:g1-delta}
\begin{enumerate}
\item 
We have $\displaystyle{A^{(m)}= \boldsymbol{y}_i + \frac{\boldsymbol{F}_i}{\boldsymbol{y}_i}}$ in $\cY_\cF$. In particular, this element is invariant under $\tau_m$. 
\item 
We have $2 (y_i(1) C^{(m)}- F_i(m) C^{(m-1)})- A^{(m)}=\displaystyle{\boldsymbol{y}_i - \frac{\boldsymbol{F}_i}{\boldsymbol{y}_i}}$ in $\cY_\cF$.
In particular, this element is invariant under $\tau_m$,
which we denote by $\delta$.
\item 
We have 
$\cY_\cF=\cZ_\cF(\delta)$,~
$\delta^2 
=(A^{(m)})^2-4 \boldsymbol{F}_i \in \cZ_\cF$
and 
$r_i(\delta)=-\delta$.
\end{enumerate}
\end{thm}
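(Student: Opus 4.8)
The plan is to reduce all three parts to explicit computations in $\cY_\infty$ that become cyclic identities after applying the specialization $\pi$. The only structural input is Lemma~\ref{lem:g1-C-f}, which rewrites each continuant as $\wt{C}^{(k)}=\wt{D}_k^{(k)}\wt{y}_i(2)\cdots\wt{y}_i(k)$, together with the relation $\wt{z}_i(n)=(1+\wt{X}_i(n))\wt{y}_i(n)$ from \eqref{eq:g1-f2}. Throughout I would write $u_n:=y_i(n)$ and abbreviate the backward products $B(a,\ell):=X_i(a)X_i(a-1)\cdots X_i(a-\ell+1)$, with $B(a,0)=1$ and all indices read modulo $m$. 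The single cyclic identity driving everything is
\[
\prod_{p\in\Z/m\Z}X_i(p)=\frac{\boldsymbol{F}_i}{\boldsymbol{y}_i^2},
\]
which holds because each $u_p$ occurs exactly twice in $\prod_p u_p u_{p+1}$.

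For part (1) I would apply $\pi$ to the definition \eqref{eq:g1-A} of $\wt{A}^{(m)}$, using $\pi\circ\tau_\infty=\tau_m\circ\pi$ to get $A^{(m)}=z_i(1)C^{(m)}-F_i(0)C^{(m-1)}-\tau_m\!\big(F_i(0)C^{(m-1)}\big)$. Substituting Lemma~\ref{lem:g1-C-f} together with the elementary cyclic reductions $y_i(1)\,u_2\cdots u_m=\boldsymbol{y}_i$ and $F_i(0)\,u_2\cdots u_{m-1}=X_i(0)\boldsymbol{y}_i$, each of the three terms becomes $\boldsymbol{y}_i$ times a sum of backward products. Explicitly the first term expands to $\boldsymbol{y}_i\big[\sum_{\ell=0}^{m-1}B(0,\ell)+\sum_{\ell=1}^{m}B(1,\ell)\big]$, whose extreme terms $B(0,0)=1$ and $B(1,m)=\prod_p X_i(p)$ produce exactly $\boldsymbol{y}_i+\boldsymbol{F}_i/\boldsymbol{y}_i$, while the middle sums are cancelled by $-F_i(0)C^{(m-1)}=-\boldsymbol{y}_i\sum_{\ell=1}^{m-1}B(0,\ell)$ and its $\tau_m$-image $-\tau_m(F_i(0)C^{(m-1)})=-\boldsymbol{y}_i\sum_{\ell=1}^{m-1}B(1,\ell)$. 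Invariance under $\tau_m$ is immediate since $\boldsymbol{y}_i,\boldsymbol{F}_i$ are products over the whole cycle. Part (2) then costs almost nothing: the same substitutions give the clean telescoping $y_i(1)C^{(m)}-F_i(m)C^{(m-1)}=\boldsymbol{y}_i\sum_{\ell=0}^{m-1}B(0,\ell)-\boldsymbol{y}_i\sum_{\ell=1}^{m-1}B(0,\ell)=\boldsymbol{y}_i$, whence the left side of (2) is $2\boldsymbol{y}_i-A^{(m)}=\boldsymbol{y}_i-\boldsymbol{F}_i/\boldsymbol{y}_i$.

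For part (3) the identity $\delta^2=(A^{(m)})^2-4\boldsymbol{F}_i$ is the formal computation $(\boldsymbol{y}_i-\boldsymbol{F}_i/\boldsymbol{y}_i)^2=(\boldsymbol{y}_i+\boldsymbol{F}_i/\boldsymbol{y}_i)^2-4\boldsymbol{F}_i$, and it lies in $\cZ_\cF$ since $A^{(m)}=\pi(\wt{A}^{(m)})\in\pi(\cZ_\infty)\subset\cZ_\cF$ while $\boldsymbol{F}_i\in\cF\subset\cZ_\cF$. To see $\cY_\cF=\cZ_\cF(\delta)$, I would first note $\boldsymbol{y}_i=\tfrac12(A^{(m)}+\delta)\in\cZ_\cF(\delta)$, and then read off from the telescoping identity established in part (2) that $y_i(1)=\big(\boldsymbol{y}_i+F_i(m)C^{(m-1)}\big)/C^{(m)}\in\cZ_\cF(\delta)$, using $C^{(m)},C^{(m-1)}\in\cZ_\cF$; applying the automorphism $\tau_m$, which preserves $\cZ_\cF$ and fixes $\delta$, yields $y_i(n)\in\cZ_\cF(\delta)$ for all $n$, hence $\cY_\cF\subset\cZ_\cF(\delta)\subset\cY_\cF$. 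Finally $r_i(\delta)=-\delta$ follows by computing $r_i(\boldsymbol{y}_i)=\prod_n r_i(y_i(n))$ from \eqref{Weyl-y}: the factors $P_i(n-2)/P_i(n-1)$ telescope to $1$ around the cycle and $\prod_n X_i(n-1)=\boldsymbol{F}_i/\boldsymbol{y}_i^2$, so $r_i(\boldsymbol{y}_i)=\boldsymbol{F}_i/\boldsymbol{y}_i$; since $r_i$ fixes $\boldsymbol{F}_i$ by \eqref{Weyl-y}, the reflection interchanges the two summands of $\delta$ and flips its sign.

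I expect the one genuinely delicate step to be the bookkeeping in part (1): correctly tracking the cyclic backward products and, in particular, the index shift distinguishing $F_i(m)$, $F_i(m-1)$ and the $\tau_\infty$-twisted term in \eqref{eq:g1-A}, so that the three sums align and cancel. Once the notation $B(a,\ell)$ is set up and the reductions $y_i(1)\,u_2\cdots u_m=\boldsymbol{y}_i$ and $F_i(0)\,u_2\cdots u_{m-1}=X_i(0)\boldsymbol{y}_i$ are recorded, everything else is formal, and parts (2) and (3) become short consequences.
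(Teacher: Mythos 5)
Your proposal is correct and takes essentially the same approach as the paper: both hinge on Lemma \ref{lem:g1-C-f}, the relation $z_i(n)=(1+X_i(n))y_i(n)$, the cyclic identity $\prod_{p}X_i(p)=\boldsymbol{F}_i/\boldsymbol{y}_i^2$, and the compatibility $\pi\circ\tau_\infty=\tau_m\circ\pi$, with your termwise cancellation of backward-product sums $B(a,\ell)$ being just an expanded rewriting of the paper's manipulation of the recursions \eqref{eq:g1-f}. Parts (2) and (3) also coincide with the paper's argument (telescoping to $y_i(1)C^{(m)}-F_i(m)C^{(m-1)}=\boldsymbol{y}_i$, recovering each $y_i(n)$ by iterating the $\delta$-fixing automorphism $\tau_m$, and getting $r_i(\delta)=-\delta$ from $r_i(\boldsymbol{y}_i)=\boldsymbol{F}_i/\boldsymbol{y}_i$); your explicit formula $y_i(1)=\bigl(\boldsymbol{y}_i+F_i(m)C^{(m-1)}\bigr)/C^{(m)}$ even supplies the division by $C^{(m)}$ that the paper's displayed formula omits.
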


\begin{proof}
(1)
By using the two formulas in \eqref{eq:g1-f}, we get
\[
D^{(m)}_{m}+X_i(1) D^{(m)}_{m}
=(1+X_i(m)D^{(m-1)}_{m-1}) + 
X_i(1)( D^{(m-1)}_m + X_i(2) \cdots X_i(m)).
\]
We then deduce from Lemma \ref{lem:g1-C-f} and \eqref{psi-def}
\begin{align*}
  z_i(1) C^{(m)} 
  &= (1+X_i(1)) \, D_m^{(m)} \boldsymbol{y}_i
  \\
  &= \left( 1 + X_i(m) D_{m-1}^{(m-1)} + X_i(1) D_{m}^{(m-1)} + X_i(1) X_i(2) \cdots X_i(m) \right) \boldsymbol{y}_i.
\end{align*}
On the other hand, 
by Lemma \ref{lem:g1-C-f} and \eqref{a-y}
we have
\begin{align*}
C^{(m-1)}
&= D_{m-1}^{(m-1)} \frac{\boldsymbol{y}_i}{y_i(1)y_i(m)}
= \frac{X_i(m)}{F_i(m)} D_{m-1}^{(m-1)}  \boldsymbol{y}_i,
\\
\tau_m(C^{(m-1)})
&= \frac{X_i(1)}{F_i(1)} D_{m}^{(m-1)}  \boldsymbol{y}_i.
\end{align*}
Thus we obtain from \eqref{eq:g1-A}
\begin{align*}
A^{(m)} 
&=z_i(1) C^{(m)} - F_i(m) C^{(m-1)} - F_i(1) \tau_m(C^{(m-1)})\\
&= (1 + X_i(1) X_i(2) \cdots X_i(m))\boldsymbol{y}_i  
= \boldsymbol{y}_i + \frac{\boldsymbol{F}_i}{\boldsymbol{y}_i},
\end{align*}
where we used \eqref{a-y} again.
This is obviously invariant under $\tau_m$.

(2) From Lemma \ref{lem:g1-C-f}, 
\eqref{eq:g1-def-small-f} and the first formula of \eqref{eq:g1-f}, 
we have in $\cY_\infty$ 
\begin{align*}
&\wt{y}_i(1) \wt{C}^{(m)}
= \wt{y}_i(1) \wt{y}_i(2) \cdots \wt{y}_i(m) \wt{D}^{(m)}_m 
= \wt{y}_i(1) \wt{y}_i(2) \cdots \wt{y}_i(m) 
(1+\wt{X}_i(m)\wt{D}^{(m-1)}_{m-1}),
\\
&F_i(m \bmod m)\wt{C}^{(m-1)}
= \wt{y}_i(1) \wt{y}_i(2) \cdots \wt{y}_i(m) 
\wt{X}_i(m)\wt{D}^{(m-1)}_{m-1}.
\end{align*}  
Combined with (1),
we obtain $2 (y_i(1) C^{(m)} - F_i(m) C^{(m-1)}) - A^{(m)}  = 2 \boldsymbol{y}_i - (\boldsymbol{y}_i + \frac{\boldsymbol{F}_i}{\boldsymbol{y}_i})$ in $\mathcal{Y}_\cF$.
This is again invariant under $\tau_m$.

(3)
We have 
$y_i(1) =F_i(m)C^{(m-1)}+(\delta+A^{(m)})/2 \in \cZ_{\cF}(\delta)$ 
by (2).
Since $\tau_m(\delta)=\delta$,
iterated application of $\tau_m$ yields
$y_i(n) \in \cZ_\cF(\delta)$ for any $n \in \Z/m\Z$,
showing the first statement.
The second one follows from from (1) and (2),
and the last one is a consequence of
\eqref{a-y}, \eqref{Weyl-y} and Lemma \ref{lem:ri-Yf}.
\end{proof}

\begin{proof}[Proof of Theorem \ref{thm:simply-laced}]
Theorem \ref{thm:g1-delta} (3) shows \eqref{prop:mid-step},
hence Lemma \ref{lem:reduction} completes the proof.
\end{proof}

\subsection{Appendix: expressions of $\wt{C}^{(k)}$ in $\mathcal{Z}_\infty$ and $A^{(m)}$ in $\mathcal{Z}_{\mathcal{F}}$}

The polynomials $\wt{C}^{(k)}$ and  $A^{(m)}$ have simple expressions in $\mathcal{Y}_\infty$ and $\mathcal{Y}_m$ respectively, 
as Lemma \ref{lem:g1-C-f} and Theorem \ref{thm:g1-delta} show.
However they are not expressed in terms of the generators of
$\mathcal{Z}_\infty$ and $\mathcal{Z}_m$.
In this subsection we present such expressions.
The results in this subsection will not be used in the sequel.

To describe $\wt{C}^{(k)}$ \eqref{eq:g1-Ck}, we introduce notations: 
\begin{align}\label{eq:A_1-L}
  &\cM_p^{(k)} = 
\bigl\{ \sigma \subset \{2,3,\ldots,k-1 \};
~|\sigma|=p, ~j \not= j'+1 ~\text{for any}~j, j' \in \sigma \},
  \\
\label{eq:A_1-L2}
&M^{(k)}_p = \sum_{\sigma \in \cM^{(k)}_p} \prod_{j \in \sigma} F_i(j \bmod m) \prod_{j' \in \overline{\sigma}} \wt{z}_i(j') \in \cZ_\infty
\end{align}
for $p=0, 1, 2,\ldots,[\frac{k-1}{2}]$,
where
$\overline{\sigma} := 
\{ j \in \{2,3,\ldots,k\} ;~ j, j-1 \not\in \sigma \}$.
We regard $\cZ_\infty$ as a graded $\cF$-algebra
by defining the degree of $\wt{z}_i(n)$ to be one for any $n \in \Z$
and those of any elements of $\cF$ to be zero
(see Lemma \ref{lem:alg-indep}).
Then $M^{(k)}_p$ is homogeneous of degree $k-1-2p$. 

\begin{prop}\label{prop:g1-ABC-1}
For $k \geq 2$, we have 
\begin{align}\label{eq:g1-C-z}
  \wt{C}^{(k)} = \sum_{p=0}^{[\frac{k-1}{2}]} (-1)^p M^{(k)}_p
\quad \text{in}~ \cZ_\infty.
\end{align}
\end{prop}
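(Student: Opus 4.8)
The plan is to prove \eqref{eq:g1-C-z} by induction on $k$, showing that the right-hand side
\[
R^{(k)} := \sum_{p=0}^{[\frac{k-1}{2}]} (-1)^p M^{(k)}_p
\]
obeys the same recurrence \eqref{eq:g1-C12}--\eqref{eq:g1-Ck} as $\wt{C}^{(k)}$. With the convention that $\cM^{(1)}_0=\{\emptyset\}$ (so $\overline{\emptyset}=\emptyset$ and the empty product is $1$), the base cases give $R^{(1)}=1$ and $R^{(2)}=\wt z_i(2)$, matching \eqref{eq:g1-C12}. Since $\wt{C}^{(k)}$ is determined by its three-term recurrence together with these initial values, it then suffices to verify $R^{(k)}=\wt z_i(k)\,R^{(k-1)}-F_i(k-1\bmod m)\,R^{(k-2)}$ for $k\ge 3$.

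The heart of the matter is a purely combinatorial recurrence for the unsigned sums $M^{(k)}_p$ of \eqref{eq:A_1-L2}, namely
\[
M^{(k)}_p = \wt z_i(k)\,M^{(k-1)}_p + F_i(k-1\bmod m)\,M^{(k-2)}_{p-1}.
\]
Granting this, I substitute it into $R^{(k)}$ and reindex the second contribution by $p\mapsto p-1$; the plus sign of the combinatorial recurrence then combines with the alternating weights $(-1)^p$ to reproduce exactly the minus sign of \eqref{eq:g1-Ck}. The summation ranges are seen to match once one observes that $M^{(j)}_p=0$ whenever $p>[\frac{j-1}{2}]$ (there are no size-$p$ non-consecutive subsets), so that padding the sums with vanishing terms is harmless.

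To establish the combinatorial recurrence I would adopt a domino–monomer tiling picture of the segment $\{2,3,\ldots,k\}$: read each $j\in\sigma$ as a domino covering $\{j,j+1\}$ and weight it by $F_i(j\bmod m)$, and read each $j'\in\overline{\sigma}$ as a monomer weighted by $\wt z_i(j')$. The non-consecutiveness condition in \eqref{eq:A_1-L} is exactly disjointness of the dominoes, and a short check shows that $\overline{\sigma}$ is precisely the set of positions in $\{2,\ldots,k\}$ left uncovered; thus $M^{(k)}_p$ is the total weight of tilings of $\{2,\ldots,k\}$ by $p$ dominoes and monomers. Splitting such a tiling according to whether the last position $k$ is a monomer (weight $\wt z_i(k)$, leaving a tiling of $\{2,\ldots,k-1\}$ with $p$ dominoes) or the right half of a domino on $\{k-1,k\}$ (weight $F_i(k-1\bmod m)$, leaving a tiling of $\{2,\ldots,k-2\}$ with $p-1$ dominoes) yields the two terms.

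Alternatively, and perhaps more safely for the written version, I can argue directly by partitioning $\cM^{(k)}_p$ according to whether $k-1\in\sigma$. The main technical obstacle in either form is bookkeeping the set $\overline{\sigma}$, whose ambient range $\{2,\ldots,k\}$ grows with $k$: one must check that when $k-1\notin\sigma$ the monomer set merely gains the element $k$, producing the factor $\wt z_i(k)$ and matching $\cM^{(k-1)}_p$, whereas when $k-1\in\sigma$ (which forces $k-2\notin\sigma$) both $k-1$ and $k$ drop out of $\overline{\sigma}$ and the remaining monomer set coincides with that of $\sigma\setminus\{k-1\}\in\cM^{(k-2)}_{p-1}$, producing the factor $F_i(k-1\bmod m)$. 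Once this index accounting is confirmed the combinatorial recurrence holds, and the induction closes.
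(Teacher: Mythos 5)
Your proof is correct and follows essentially the same route as the paper's: induction on $k$, reduced to the combinatorial recurrence for $M_p^{(k)}$ proved by splitting $\cM_p^{(k)}$ according to whether $k-1\in\sigma$ (your direct reindexing in $p$ replaces the paper's comparison of homogeneous parts, and the domino--monomer picture is only a reinterpretation of the same splitting). Incidentally, you state the recurrence with the correct sign, $M_p^{(k)}=\wt{z}_i(k)\,M_p^{(k-1)}+F_i(k-1\bmod m)\,M_{p-1}^{(k-2)}$, the minus in \eqref{eq:g1-Ck} then arising from the $(-1)^p$ weights under reindexing; the paper's displayed reduction writes this identity with a minus sign, a typo inconsistent with its own set-splitting computation, which your version avoids.
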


\begin{proof}
It is immediate from the definition that 
\begin{align}\label{eq:easyfromdef}
M_0^{(k)} = \prod_{p=2}^k \wt{z}_i(p),
\qquad 
M_1^{(3)}=F_i(2).
\end{align}
We now proceed by induction on $k$. 
It follows from \eqref{eq:g1-C12}, \eqref{eq:g1-Ck} 
and \eqref{eq:easyfromdef} that
\begin{align*}
&\wt{C}^{(2)}=\wt{z}_i(2)= M_0^{(2)},
\\
&\wt{C}^{(3)} = \wt{z}_i(3) \wt{C}^{(2)} - F_i(2)\wt{C}^{(1)} = \wt{z}_i(2)\wt{z}_i(3) -F_i(2) = M_0^{(3)} - M_1^{(3)},
\end{align*}
proving the cases $k=2, 3$.
For $k \ge 4$, by inductive hypothesis and \eqref{eq:g1-Ck} we have
\begin{align}\label{eq:ind-CandM}
\wt{C}^{(k)}
= \wt{z}_i(k) 
\sum_{p=0}^{[\frac{k-2}{2}]} (-1)^p M^{(k-1)}_p
- F_i(k-1 \bmod m) 
\sum_{p=0}^{[\frac{k-3}{2}]} (-1)^p M^{(k-2)}_p.
\end{align}
By comparing the degree $(k-1-2p)$-parts 
of \eqref{eq:ind-CandM} and \eqref{eq:g1-C-z},
we are reduced to showing
\begin{align*}
&M_0^{(k)}= \wt{z}_i(k) M^{(k-1)}_0,
\\
&M_p^{(k)}= \wt{z}_i(k) M^{(k-1)}_p - F_i(k-1 \bmod m) M^{(k-2)}_{p-1}
\quad \text{for} ~~p=1, \dots, [\frac{k-1}{2}].
\end{align*}
The first equality follows  from \eqref{eq:easyfromdef}.
To show the second, 
we suppose $1 \le p \le [\frac{k-1}{2}]$
and compute using \eqref{eq:A_1-L2}:
\begin{align*}
M^{(k)}_p 
=&
\sum_{\substack{\sigma \in \cM^{(k)}_p \\k-1 \not\in \sigma}} 
\prod_{j \in \sigma} F_i(j \bmod m) \prod_{\ol{j} \in \overline{\sigma}} \wt{z}_i(\ol{j})
+
\sum_{\substack{\sigma \in \cM^{(k)}_p \\k-1 \in \sigma}}
\prod_{j \in \sigma} F_i(j \bmod m) \prod_{\ol{j} \in \overline{\sigma}} \wt{z}_i(\ol{j})
\\
=&~
\wt{z}_i(k)
\sum_{\sigma \in \cM^{(k-1)}_p} 
\prod_{j \in \sigma} F_i(j \bmod m) \prod_{\ol{j} \in \overline{\sigma}} \wt{z}_i(\ol{j})
\\
&~+
F_i(k-1 \bmod m)
\sum_{\sigma \in \cM^{(k-2)}_{p-1}}
\prod_{j \in \sigma} F_i(j \bmod m) \prod_{\ol{j} \in \overline{\sigma}} \wt{z}_i(\ol{j})
\\
=&~ \wt{z}_i(k) M^{(k-1)}_p - F_i(k-1 \bmod m) M^{(k-2)}_{p-1}.
\end{align*}
We are done.\end{proof}

\begin{prop}
We have a formula 
\begin{align}\label{eq:A_1-Ainz}
  A^{(m)} = \sum_{p=0}^{[\frac{m}{2}]} (-1)^p \,T^{(m)}_{p} 
\quad \text {in } \cZ_{\cF},
\end{align}
where 
\begin{align}
  &\mathcal{T}_p^{(m)} = \bigl\{ \sigma  \subset \Z/m\Z;
~|\sigma|=p, ~
j \not= j'+1 ~\text{for any} ~j, j' \in \sigma \}, 
  \\
  &T^{(m)}_{p} = \sum_{\sigma \in \mathcal{T}^{(m)}_p} \prod_{j \in {\sigma}} F_i(j)\prod_{j' \in \overline{\sigma}} z_i(j').
\end{align}
Here, for $\sigma \in \mathcal{T}_k^{(m)}$ we set 
$\overline{\sigma} = \{ j \in (\Z / m\Z); ~j, j-1 \not\in \sigma \}$.
\end{prop}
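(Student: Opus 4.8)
The plan is to reduce the identity to a single combinatorial decomposition, exploiting the linear expansion of $\wt{C}^{(k)}$ already established in Proposition \ref{prop:g1-ABC-1}. First I would put the left-hand side into ``path form''. Applying $\pi$ to the defining relation \eqref{eq:g1-A} and using $\pi \circ \tau_\infty = \tau_m \circ \pi$ from Definition \ref{lem:shift-op}(3) — this is precisely the computation carried out in the proof of Theorem \ref{thm:g1-delta}(1) — yields
\[
A^{(m)} = z_i(1)\,C^{(m)} - F_i(m)\,C^{(m-1)} - F_i(1)\,\tau_m\bigl(C^{(m-1)}\bigr) \quad\text{in } \cZ_\cF.
\]
Substituting the formula \eqref{eq:g1-C-z} (pushed forward by $\pi$) for $C^{(m)}$ and $C^{(m-1)}$ then expresses each of the three terms as a signed sum over the linear non-consecutive subsets indexed by $\cM_p^{(m)}$ and $\cM_p^{(m-1)}$.

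Next I would fix a combinatorial dictionary interpreting both sides as signed domino--monomer tilings. For a subset $\sigma$ with no two cyclically (resp.\ linearly) consecutive elements, I read each $j \in \sigma$ as a domino occupying $\{j, j+1\}$ with weight $F_i(j)$, each element of $\overline{\sigma}$ as a monomer with weight $z_i$, and attach the global sign $(-1)^{|\sigma|}$; the non-consecutiveness condition is exactly the statement that the dominoes are disjoint, so that $\overline{\sigma}$ is the set of uncovered positions. Under this reading $\sum_{p}(-1)^p T^{(m)}_p$ is the signed tiling sum over the cycle $\Z/m\Z$, while $C^{(m)}$, $C^{(m-1)}$, and $\tau_m(C^{(m-1)})$ are the signed tiling sums over the paths $\{2,\dots,m\}$, $\{2,\dots,m-1\}$, and $\{3,\dots,m\}$ respectively.

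The heart of the proof is then a disjoint decomposition of the cyclic tilings according to the role of position $1$, which is exactly one of: (a) a monomer, i.e.\ $1 \in \overline{\sigma}$ (equivalently $1, m \notin \sigma$); (b) the left end of a domino, i.e.\ $1 \in \sigma$, covering $\{1,2\}$; (c) the right end of a domino, i.e.\ $m \equiv 0 \in \sigma$, covering $\{m,1\}$. I would check that deleting position $1$ in case (a) turns the cycle into the path $\{2,\dots,m\}$ and factors out a monomer weight $z_i(1)$, matching $z_i(1)\,C^{(m)}$; that deleting the wrap-around domino $\{m,1\}$ in case (c) leaves the path $\{2,\dots,m-1\}$ and factors out $F_i(m)$ with one extra sign, matching $-F_i(m)\,C^{(m-1)}$; and that deleting the domino $\{1,2\}$ in case (b) leaves the path $\{3,\dots,m\}$, whose tiling sum is $\tau_m(C^{(m-1)})$, matching $-F_i(1)\,\tau_m(C^{(m-1)})$. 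Summing the three cases reproduces the displayed expression for $A^{(m)}$, so the proposition follows.

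The main obstacle I anticipate is purely one of bookkeeping: verifying that in each of the three cases the resulting path tiling lies in weight- and sign-preserving bijection with the index sets $\cM_p^{(m)}$ and $\cM_p^{(m-1)}$ (so the ranges $\{2,\dots,m-1\}$ and $\{2,\dots,m-2\}$, the complements $\overline{\sigma}$, and the single index shift implementing $\tau_m$ in case (b) must all be matched carefully), together with confirming that removing a domino flips the sign consistently with the $-1$ sitting in front of the $F_i$-terms. I would also make a separate direct check of the small cases, notably $m=2$, where the two dominoes through position $1$ are supported on the same edge yet correspond to the distinct subsets $\{1\}$ and $\{0\}$, so cases (b) and (c) remain disjoint. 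Once the dictionary is pinned down, these verifications are routine.
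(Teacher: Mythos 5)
Your proposal is correct and takes essentially the same route as the paper: after applying $\pi$ to \eqref{eq:g1-A} and expanding $C^{(m)}$, $C^{(m-1)}$ via Proposition \ref{prop:g1-ABC-1}, the paper likewise splits the cyclic index set into the three disjoint classes $1,m \notin \sigma$, $m \in \sigma$, $1 \in \sigma$ (your cases (a), (c), (b)), using that $1$ and $m$ cannot both lie in $\sigma \in \mathcal{T}^{(m)}_p$. Your domino--monomer dictionary is just a reformulation of that same decomposition, which the paper carries out degreewise on the homogeneous pieces $M^{(k)}_p$.
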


\begin{proof}
From \eqref{eq:g1-A} we have 
\begin{align}\label{eq:A_1-AC}
\begin{split}
  \wt{A}^{(m)} &= \wt{z}_i(1) \sum_{p=0}^{[\frac{m-1}{2}]} (-1)^p M^{(m)}_{p} - F_i(m \bmod m) \sum_{p=0}^{[\frac{m-2}{2}]} (-1)^p M^{(m-1)}_{p} 
  \\
  & \qquad - F_i(1 \bmod m) \sum_{p=0}^{[\frac{m-2}{2}]} (-1)^p \tau_{\infty} (M^{(m-1)}_{p}).
\end{split}
\end{align}
By taking the degree $(m-2p)$-part and taking the image by $\pi$ of \eqref{eq:A_1-AC}, \eqref{eq:A_1-Ainz} reduces to
\begin{align}\label{eq:T-M}
T^{(m)}_p = 
\begin{cases}
z_i(1) \pi(M^{(m)}_0) & p = 0,
\\
z_i(1) \pi(M^{(m)}_{p}) + F_i(m) \pi(M^{(m-1)}_{p-1}) + F_i(1) \pi \circ \tau_{\infty} (M^{(m-1)}_{p-1}) & 1 \leq p \leq [\frac{m-1}{2}].
\end{cases}
\end{align}
The elements of $\mathcal{M}_p^{(m)}$ are subsets of $\{2,3,\ldots,m-1\}$, and we safely divert $\mathcal{M}_p^{(m)}$ to the set of subsets of $\{2,3,\ldots,m-1\} \subset \Z/m \Z$.
When $p=0$, by using \eqref{eq:easyfromdef}, the r.h.s. of \eqref{eq:T-M} coincides with $T_0^{(m)}$ as follows
$$
  z_i(1) \pi(M^{(m)}_0) = z_i(1) \prod_{p=2}^m z_i(p) = T_0^{(m)}. 
$$ 
When $1 \leq p \leq [\frac{m-1}{2}]$, the r.h.s. of \eqref{eq:T-M} is written as
\begin{align*}
z_i(1) & \sum_{\sigma \in \mathcal{M}^{(m)}_p} \prod_{j \in {\sigma}} F_i(j)\prod_{j' \in \overline{\sigma}} z(j') 
+ F_i(m) \sum_{\sigma \in \mathcal{M}^{(m-1)}_{p-1}} \prod_{j \in {\sigma}} F_i(j)\prod_{j' \in \overline{\sigma}} z(j')
\\
& \qquad \qquad + F_i(1) \sum_{\sigma \in \mathcal{M}^{(m-1)}_{p-1}} \prod_{j \in {\sigma}} F_i(j+1)\prod_{j' \in \overline{\sigma}} z(j'+1)
\\
&= \sum_{\substack{\sigma \in \mathcal{T}^{(m)}_p \\ 1,m \not\in \sigma}} \prod_{j \in {\sigma}} F_i(j)\prod_{j' \in \overline{\sigma}} z(j') 
+ \sum_{\substack{\sigma \in \mathcal{T}^{(m)}_{p} \\ m \in \sigma}} \prod_{j \in {\sigma}} F_i(j)\prod_{j' \in \overline{\sigma}} z(j')
\\
& \qquad \qquad
+ \sum_{\substack{\sigma \in \mathcal{T}^{(m)}_{p} \\ 1 \in \sigma}} \prod_{j \in {\sigma}} F_i(j+1)\prod_{j' \in \overline{\sigma}} z(j'+1).
\end{align*}
The last formula is nothing but $T^{(m)}_p$, since $\sigma \in \mathcal{T}^{(m)}_p$ does not contain $1$ and $m$ at the same time. Consequently, we obtain \eqref{eq:A_1-Ainz}.
\end{proof}

\section{Invariant subfield $\mathcal{Y}_m^W$: non-simply-laced cases}\label{sect:non-sl}

\subsection{Statements of the results}
When $\mg$ is associated to a non-simply-laced Dynkin diagram, 
we have $d' \in \{ 2d, 3d \}$ and 
$d_i \in \{ d, d' \}$ for any $i \in I$
as in \eqref{table:dd}. 
For $i \in I$ and 
\[
s \in 
{\Sigma}_i := \{s \in \Z; 1 \le s \le \frac{d_i}{d} \},
\] 
we define 
\begin{align}
\notag
&N_{i,s} := (d_i \Z + (s-1)d)/d'm \Z \subset d \Z/ d'm \Z,
\\
\label{eq:def-bldYF-delta}
&\boldsymbol{y}_{i,s} := \displaystyle{\prod_{n \in N_{i,s}} y_i(n)}, \quad 
\boldsymbol{F}_{i,s} := \displaystyle{\prod_{n \in N_{i,s}} F_i(n)}, \quad 
\delta_{i,s} = \boldsymbol{y}_{i,s} - \frac{\boldsymbol{F}_{i,s}}{\boldsymbol{y}_{i,s}}
\quad \in \mathcal{Y}_m.
\end{align}
Note that we have
${\Sigma}_i = \{ 1\}$ and $N_{i,1} = d \Z/ d'm \Z$
precisely when $d_i = d$. 
If this is not the case (i.e. $d_i=d'$),
we have
$|{\Sigma}_i| =d_i/d \in \{ 2, 3 \}$
and
$|N_{i, s}|=m$ for any $s \in \Sigma_i$.
Let us define a subfield $\mathcal{Z}_m^{(i) \prime}$
of $\mathcal{Y}_m$ as follows:
\begin{equation}\label{eq:def-sZprime}
\mathcal{Z}_m^{(i) \prime}
:=\begin{cases}
  \mathcal{Z}_m^{(i)} & \text{ if } \frac{d_i}{d} = 1,
  \\
  \mathcal{Z}_m^{(i)}(\delta_{i,1} \delta_{i,2}) & \text{ if } \frac{d_i}{d} = 2,  
  \\
  \mathcal{Z}_m^{(i)}(\delta_{i,1} \delta_{i,2}, \delta_{i,2} \delta_{i,3}) & \text{ if } \frac{d_i}{d} = 3.
\end{cases}
\end{equation}

The goal of this section is the following theorem.

\begin{thm}\label{thm:non-simply-laced}
Suppose that $\mg$ has a non-simply laced Dynkin diagram (that is, $\mg = B_\ell, C_\ell, F_4$ or $G_2$). Then, for $i \in I$  we have 
$\mathcal{Y}_m^{r_i} = \mathcal{Z}_m^{(i) \prime}$.
\end{thm}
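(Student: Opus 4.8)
The plan is to deduce the non-simply-laced statement from the simply-laced Theorem \ref{thm:simply-laced} by decomposing the index set $d\Z/d'm\Z$ into the cosets $N_{i,s}$ along which $r_i$ acts, and then applying Theorem \ref{thm:g1-delta} on each coset separately. First I would set $L := \C(y_j(n); j\in I\setminus\{i\}, n \in d\Z/d'm\Z)$, so that $\mathcal{Y}_m = L(y_i(n); n)$ and $\mathcal{Z}_m^{(i)} = L(z_i(n); n)$, noting $F_i(n)\in L$ for all $n$ by \eqref{eq:a-y-g}. The crucial point to establish is that every ingredient of the $r_i$-action is built from the shift by $d_i$: $X_i(n)$ in \eqref{a-y} uses only $y_i(n), y_i(n+d_i)$; $P_i(n)$ in \eqref{eq:def-fsmall} only $X_i(n), X_i(n-d_i), \dots$; the relation \eqref{psi-def} links $y_i(n)$ with $y_i(n+d_i)$; and \eqref{Weyl-y} expresses $r_i(y_i(n))$ through $y_i$-variables indexed in $n + d_i\Z$. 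Since $N_{i,s} = (d_i\Z + (s-1)d)/d'm\Z$ is a coset of $d_i\Z$, all of this stays within a single coset. Hence for each $s\in\Sigma_i$ the subfield $\mathcal{Y}_m^{(s)} := L(y_i(n); n\in N_{i,s})$ is $r_i$-stable, and the restricted data $(y_i(n), z_i(n), F_i(n), r_i)_{n\in N_{i,s}}$ is a faithful copy of the simply-laced setup of Section \ref{sect:simplylaced}, with $m$ there replaced by $|N_{i,s}|$ and the scalar field $\cF$ replaced by $L$.

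Transporting Theorem \ref{thm:g1-delta} along this identification (over the base $L$) would give, for every $s\in\Sigma_i$, that $\mathcal{Y}_m^{(s)} = \mathcal{Z}_m^{(s)}(\delta_{i,s})$ where $\mathcal{Z}_m^{(s)} := L(z_i(n); n\in N_{i,s})$, that $\delta_{i,s}^2\in\mathcal{Z}_m^{(s)}\subseteq\mathcal{Z}_m^{(i)}$, and that $r_i(\delta_{i,s}) = -\delta_{i,s}$; here $\delta_{i,s}$ of \eqref{eq:def-bldYF-delta} is precisely the coset-$s$ incarnation of the element $\delta$ of Theorem \ref{thm:g1-delta}(3), and $[\mathcal{Y}_m^{(s)}:\mathcal{Z}_m^{(s)}] = 2$ because $r_i(\delta_{i,s}) = -\delta_{i,s}\neq\delta_{i,s}$. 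In the short-root case $d_i = d$ there is a single coset $N_{i,1} = d\Z/d'm\Z$, so this already yields $\mathcal{Y}_m = \mathcal{Z}_m^{(i)}(\delta_{i,1})$ with $r_i$ negating $\delta_{i,1}$, whence $\mathcal{Y}_m^{r_i} = \mathcal{Z}_m^{(i)} = \mathcal{Z}_m^{(i)\prime}$ and the theorem follows exactly as in Lemma \ref{lem:reduction}.

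For the long-root case $d_i = d'$, writing $c := d_i/d = |\Sigma_i|\in\{2,3\}$, I would globalise as follows. As the cosets are disjoint, $\mathcal{Y}_m$ is the free compositum over $L$ of the $r_i$-stable fields $\mathcal{Y}_m^{(s)}$, and $\mathcal{Z}_m^{(i)}$ is the free compositum of the $\mathcal{Z}_m^{(s)}$. Because each $r_i|_{\mathcal{Y}_m^{(s)}}$ is an $L$-automorphism fixing $\mathcal{Z}_m^{(s)}$, factor-wise action on the free compositum produces commuting involutions $\rho_s$ ($s\in\Sigma_i$) of $\mathcal{Y}_m$ fixing $\mathcal{Z}_m^{(i)}$ with $\rho_s(\delta_{i,t}) = -\delta_{i,t}$ for $t=s$ and $\rho_s(\delta_{i,t}) = \delta_{i,t}$ for $t\neq s$. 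The group $\langle\rho_s; s\in\Sigma_i\rangle$ has order $2^c$ and fixes $\mathcal{Z}_m^{(i)}$, giving $[\mathcal{Y}_m:\mathcal{Z}_m^{(i)}]\ge 2^c$; conversely $\mathcal{Y}_m = \mathcal{Z}_m^{(i)}(\delta_{i,s}; s\in\Sigma_i)$ with each $\delta_{i,s}^2\in\mathcal{Z}_m^{(i)}$ forces $[\mathcal{Y}_m:\mathcal{Z}_m^{(i)}]\le 2^c$. Thus $\mathcal{Y}_m/\mathcal{Z}_m^{(i)}$ is a multiquadratic Galois extension with group $(\Z/2)^c = \langle\rho_s\rangle$, and $r_i$, which negates every $\delta_{i,s}$, equals $\prod_{s\in\Sigma_i}\rho_s$.

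Finally I would read off the fixed field: a monomial $\prod_{s}\delta_{i,s}^{e_s}$ ($e_s\in\{0,1\}$) is $r_i$-invariant iff $\sum_s e_s$ is even, so $\mathcal{Y}_m^{r_i}$ is generated over $\mathcal{Z}_m^{(i)}$ by the pairwise products $\delta_{i,s}\delta_{i,s'}$; using $\delta_{i,s}^2\in\mathcal{Z}_m^{(i)}$ these reduce to $\delta_{i,1}\delta_{i,2}$ (and also $\delta_{i,2}\delta_{i,3}$ when $c=3$), which is exactly $\mathcal{Z}_m^{(i)\prime}$ of \eqref{eq:def-sZprime}, of degree $2^{c-1}$ over $\mathcal{Z}_m^{(i)}$ — that is, $2$ when $d_i=2d$ and $4$ when $d_i=3d$. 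The main obstacle I anticipate is the first step: checking, uniformly across types $B_\ell, C_\ell, F_4, G_2$, that restriction to a coset $N_{i,s}$ genuinely reproduces the simply-laced data, so that Theorem \ref{thm:g1-delta} applies verbatim — in particular the coset-compatibility of $X_i$, $P_i$ and \eqref{Weyl-y}, including the half-integer shifts entering $F_i(n)$ for $B_\ell$ and $F_4$ (harmless since $F_i(n)\in L$). The secondary subtlety is the non-degeneracy $[\mathcal{Y}_m:\mathcal{Z}_m^{(i)}]=2^c$, for which the auxiliary involutions $\rho_s$ (equivalently, linear disjointness over $L$) are exactly what prevents the multiquadratic extension from collapsing.
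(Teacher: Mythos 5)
Your proposal is correct, and its skeleton is the same as the paper's: the paper likewise decomposes $d\Z/d'm\Z$ into the cosets $N_{i,s}$, introduces coset-wise operators $r_{i,s}$ (your $\rho_s$ are exactly the maps defined in \eqref{eq:Weyl-r_is}), runs the simply-laced argument per coset to produce $\delta_{i,s}$ with $\delta_{i,s}^2\in\cZ_m^{(i)}$ and $r_{i,s}(\delta_{i,s})=-\delta_{i,s}$ (Theorem \ref{thm:g2-delta}, proved ``in precisely the same manner'' as Theorem \ref{thm:g1-delta}), and then identifies $\cY_m/\cZ_m^{(i)}$ as a multiquadratic Galois extension with group $(\Z/2\Z)^{\Sigma_i}$, so that $\cY_m^{r_i}$ is the fixed field of the diagonal involution $r_i=\prod_s r_{i,s}$, generated over $\cZ_m^{(i)}$ by the pairwise products $\delta_{i,s}\delta_{i,s'}$ (Lemma \ref{lem:ri-Yf-non}, Lemma \ref{lem:reduction2}, Theorem \ref{thm:refined}). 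Where you genuinely diverge is on the one technically hard point: the order-two property of the coset operators. The paper states this as Proposition \ref{lem:Weyl-prefolding} and proves it by an explicit computation in \S\ref{app:proof-order2}, whose engine is the transformation law for $P_i(n)$ under $r_{i,s}$ (Proposition \ref{prop:r_is_on_f}) together with two auxiliary lemmas on the truncated sums $D_n^{(k)}$. You instead note that each $\cY_m^{(s)}=L\bigl(y_i(n);\,n\in N_{i,s}\bigr)$ is $r_i$-stable --- because $X_i$, $P_i$ and \eqref{Weyl-y} couple only indices within a single coset of $d_i\Z$, while $F_i(n)\in L$ --- define $\rho_s$ as the factor-wise extension of $r_i|_{\cY_m^{(s)}}$ by the identity on the remaining, algebraically independent, variables, and inherit $\rho_s^2=\mathrm{id}$ from $r_i^2=\mathrm{id}$, which is part of the cited Theorem \ref{thm:WonY-g}. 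This is a legitimate and genuinely shorter route: it makes \S\ref{app:proof-order2} unnecessary for the main theorem, at the cost of leaning on the global fact that \eqref{Weyl-y} defines a $W$-action, whereas the paper's computation establishes Proposition \ref{lem:Weyl-prefolding} (and the explicit formula for $r_{i,s}(P_i(n))$, of independent interest, proved there without cluster mutations) autonomously. Your other deviation --- working over $L=\C\bigl(y_j(n);\,j\neq i\bigr)$ rather than over $\cF$ with the compositum reduction of Lemma \ref{lem:reduction2} --- is harmless, since the proofs of Lemma \ref{lem:g1-C-f} and Theorem \ref{thm:g1-delta} use nothing about the base field beyond the fact that it contains the $F_i(n)$ and that the $y_i(n)$ are independent transcendentals over it; and your final Kummer-type count, $[\cY_m^{r_i}:\cZ_m^{(i)}]=2^{d_i/d-1}$ with generators as in \eqref{eq:def-sZprime}, is exactly the step the paper leaves implicit when it says Theorem \ref{thm:non-simply-laced} follows from Theorem \ref{thm:refined}.
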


In the rest of this section, we assume that $\mg$ is associated to a non-simply laced Dynkin diagram, and fix $i \in I$ and $m>1$.
We are going to prove a finer result than 
Theorem \ref{thm:non-simply-laced}
in Theorem \ref{thm:refined} below.
In order to formulate it, 
we need more notations.
For $s \in {\Sigma}_i$, define an automorphism $r_{i,s}$ of $\mathcal{Y}_m$ by
\begin{align}\label{eq:Weyl-r_is} 
  r_{i,s}(y_j(n)) = 
\begin{cases}
\displaystyle{\frac{P_i(n-2d_i)}{P_i(n-d_i)} y_i(n) X_i(n-d_i)} & j = i, n \in N_{i,s},
\\[1mm]
y_j(n) & otherwise,
\end{cases}
\end{align}
where $(j, n) \in I \times d\Z/d'm\Z$.
From \eqref{a-y} and \eqref{eq:def-fsmall} 
we get
\begin{equation}\label{eq:PXfixed-ris}
r_{i, s}(P_i(n))=P_i(n)
~\text{and}~
r_{i, s}(X_i(n))=X_i(n)
\quad \text{if}~ n \not\in N_{i, s}.
\end{equation}

\begin{lem}\label{lem:ri-Yf-non}
For any $s, s' \in {\Sigma}_i$ satisfying $s \neq s'$, 
the following hold.
\begin{itemize}
\item[(1)]
The actions of $r_{i,s}$ and $r_{i,s'}$ on $\cY_m$ are commutative. 
\item[(2)]
We have $\displaystyle{r_i = \prod_{s \in {\Sigma}_i} r_{i,s}}$. 
\item[(3)]
We have 
$r_{i,s}(\delta_{i,s'})=\delta_{i,s'}$.
\end{itemize}
\end{lem}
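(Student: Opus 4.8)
The plan is to exploit that the index set $d\Z/d'm\Z$ is partitioned into the disjoint cosets $N_{i,s}$, $s \in \Sigma_i$, and that each $r_{i,s}$ alters only the generators $y_i(n)$ with $n \in N_{i,s}$ while fixing every $y_j(n)$ with $j \neq i$. First I would record the underlying locality facts. Since $N_{i,s} = (d_i\Z + (s-1)d)/d'm\Z$ is stable under the shift $n \mapsto n + d_i$, the defining formulas \eqref{a-y} and \eqref{eq:def-fsmall} show that $X_i(n)$ and $P_i(n)$ are built from the variables $y_i(n')$ with $n'$ in the single coset $N_{i,s}$ containing $n$, together with the $y_j$ for $j \neq i$; and by \eqref{eq:a-y-g} each $F_i(n)$ involves only $y_j$ with $j \neq i$. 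In particular \eqref{eq:PXfixed-ris} is available, and it is the statement I would lean on throughout.

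For (1), I would check commutativity on generators. Both $r_{i,s}$ and $r_{i,s'}$ fix $y_j(n)$ for $j \neq i$ and fix $y_i(n)$ for $n \notin N_{i,s} \cup N_{i,s'}$, so the two composites agree there. For $n \in N_{i,s}$ (hence $n \notin N_{i,s'}$ by disjointness), the generator $y_i(n)$ is fixed by $r_{i,s'}$, while $r_{i,s}(y_i(n)) = \frac{P_i(n-2d_i)}{P_i(n-d_i)} y_i(n) X_i(n-d_i)$ is assembled from $y_i(n)$ and from $P_i, X_i$ at the indices $n-d_i, n-2d_i$, all of which lie in $N_{i,s}$ and are therefore fixed by $r_{i,s'}$ via \eqref{eq:PXfixed-ris}. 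Thus $r_{i,s'}r_{i,s}$ and $r_{i,s}r_{i,s'}$ send $y_i(n)$ to the same element; the case $n \in N_{i,s'}$ is symmetric, giving (1).

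For (2), commutativity makes $\prod_{s \in \Sigma_i} r_{i,s}$ a well-defined automorphism, and I would compare it with $r_i$ on generators. Both fix $y_j(n)$ for $j \neq i$. Given $n$, let $s_0$ be the unique index with $n \in N_{i,s_0}$; the factors $r_{i,s}$ with $s \neq s_0$ leave $y_i(n)$ and the relevant $P_i, X_i$ untouched, so the product sends $y_i(n)$ to $r_{i,s_0}(y_i(n)) = \frac{P_i(n-2d_i)}{P_i(n-d_i)} y_i(n) X_i(n-d_i)$, which is exactly $r_i(y_i(n))$ by \eqref{Weyl-y}. For (3), I would observe that $\delta_{i,s'} = \boldsymbol{y}_{i,s'} - \boldsymbol{F}_{i,s'}/\boldsymbol{y}_{i,s'}$ involves only the $y_i(n)$ with $n \in N_{i,s'}$ and the $F_i(n)$ with $n \in N_{i,s'}$; since $s \neq s'$ every such $y_i(n)$ lies outside $N_{i,s}$, and every $F_i(n)$ involves only $y_j$ with $j \neq i$, so all factors are fixed by $r_{i,s}$ and hence $r_{i,s}(\delta_{i,s'}) = \delta_{i,s'}$.

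The genuine content is the locality encoded in \eqref{eq:PXfixed-ris}; once coset-disjointness is in hand, all three parts reduce to bookkeeping on generators. The one point demanding care is verifying that $P_i(n-2d_i)$, $P_i(n-d_i)$ and $X_i(n-d_i)$ sit in the same coset $N_{i,s}$ as $n$—so that the other reflections fix them—which follows from the $d_i$-shift stability of $N_{i,s}$.
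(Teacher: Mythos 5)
Your proposal is correct and follows essentially the same route as the paper's proof: all three parts are verified on the generators $y_j(n)$ using the disjointness of the cosets $N_{i,s}$ and the locality fact \eqref{eq:PXfixed-ris}, with (2) and (3) reducing to the observation that the reflections $r_{i,s}$ with $s\neq s_0$ fix everything appearing in $r_{i,s_0}(y_i(n))$ and in $\delta_{i,s'}$. You spell out the $d_i$-shift stability of $N_{i,s}$ and the generator check in (2) slightly more explicitly than the paper, which treats those points as immediate from the definitions, but the underlying argument is identical.
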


\begin{proof}
(1) 
Let $(j,n) \in I \times d \Z / d'm \Z$.
If $j \not=i$ or if $n \not\in (N_{i, s} \cup N_{i, s'})$,
then
it follows that $r_{i,s} r_{i,s'}(y_j(n)) = y_j(n)= r_{i,s'} r_{i,s}(y_j(n))$ from \eqref{eq:Weyl-r_is}. 
Otherwise, when $n \in N_{i,s}$ we have $r_{i,s} r_{i,s'}(y_i(n)) = r_{i,s}(y_i(n)) = r_{i,s'} r_{i,s}(y_i(n))$, and when $n \in N_{i,s'}$ we have $r_{i,s} r_{i,s'}(y_i(n)) = r_{i,s'}(y_i(n)) = r_{i,s'} r_{i,s}(y_i(n))$, from \eqref{eq:Weyl-r_is} and \eqref{eq:PXfixed-ris}.
Thus the claim follows.

(2) Due to (1), this is nothing but a paraphrase of the definition of $r_i$ as a composition of commuting operators $r_{i,s}$ for $s \in {\Sigma}_i$.

(3)
This follows from definitions
\eqref{eq:def-bldYF-delta} and \eqref{eq:Weyl-r_is}.
\end{proof}

\begin{prop}\label{lem:Weyl-prefolding}
For $s \in {\Sigma}_i$, the order of $r_{i,s}$ is two.
\end{prop}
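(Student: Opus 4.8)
The plan is to show that $r_{i,s}$ acts as an involution on $\mathcal{Y}_m$, i.e. $r_{i,s}^2 = \mathrm{id}$. Since $r_{i,s}$ fixes $y_j(n)$ for $j \neq i$ and for $j=i$ with $n \notin N_{i,s}$, it suffices to verify that $r_{i,s}^2(y_i(n)) = y_i(n)$ for $n \in N_{i,s}$. The key observation is that the formula \eqref{eq:Weyl-r_is} for $r_{i,s}$ on $N_{i,s}$ is formally identical to the formula \eqref{Weyl-y} for the full reflection $r_i$ in Theorem~\ref{thm:WonY-g}, which is known to generate a $W$-action; in particular $r_i$ has order two there. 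The difficulty is that $r_{i,s}$ only mutates the variables indexed by the sublattice $N_{i,s}$, whereas $X_i(n-d_i)$ and $P_i(n)$ mix in the variables $y_i(n')$ with $n'$ ranging over the \emph{full} lattice $d\Z/d'm\Z$, not just over $N_{i,s}$.

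First I would record precisely how $X_i$ and $P_i$ transform under $r_{i,s}$. By \eqref{eq:PXfixed-ris}, both $X_i(n)$ and $P_i(n)$ are fixed whenever $n \notin N_{i,s}$, so the only nontrivial transformations occur on the sublattice $N_{i,s}$, which is an arithmetic progression of step $d_i$. Restricting attention to this progression, the variables $y_i(n)$ with $n \in N_{i,s}$ together with the "constant" factors $F_i(n)/(y_j$-part$)$ behave exactly as in the rank-one simply-laced situation once one reindexes $N_{i,s} \cong \Z/m\Z$. Concretely, writing $n_k = (s-1)d + k\,d_i$ for the points of $N_{i,s}$, one has $X_i(n_k) = F_i(n_k)/(y_i(n_k)y_i(n_{k+1}))$, where $F_i(n_k)$ is $r_{i,s}$-invariant since it is built from $y_j$ with $j \neq i$ (and from $y_i$ at arguments outside $N_{i,s}$, by inspection of \eqref{eq:a-y-g}). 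Thus the dynamics of $r_{i,s}$ on $N_{i,s}$ is governed by the same mutation recurrence as the order-two operator analyzed in the simply-laced case.

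The cleanest route is therefore to verify the involution identity by a direct computation analogous to the one underlying Theorem~\ref{thm:WonY-g}: substitute the transformation rule \eqref{eq:Weyl-r_is} into itself for $n \in N_{i,s}$, using $r_{i,s}(F_i(n)) = F_i(n)$ and the multiplicativity of $X_i$ in the $y_i$-variables, and check that the factors $P_i(n-2d_i)/P_i(n-d_i)$ and $X_i(n-d_i)$ recombine to give the identity. The algebraic heart is the telescoping relation satisfied by $P_i(n)$ under the mutation, namely that applying \eqref{eq:Weyl-r_is} sends $P_i(n)$ to a ratio involving $P_i(n-d_i)$ and $P_i(n+d_i)$ in such a way that two successive applications cancel. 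Since the already-established full-rank statement (order of $r_i$ is two) is an identity in the polynomials $P_i, X_i$ that does not actually use the values of $y_i$ off $N_{i,s}$ except through the $r_{i,s}$-invariant quantities $F_i$, the same identity specializes verbatim to $r_{i,s}$.

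The main obstacle I anticipate is justifying rigorously that the order-two computation for $r_i$ "localizes" to a single progression $N_{i,s}$: one must confirm that no cross-terms between different progressions $N_{i,s'}$ ($s' \neq s$) enter the relevant identity, so that fixing the $y_i(n')$ with $n' \notin N_{i,s}$ does not obstruct the telescoping. This is precisely controlled by \eqref{eq:PXfixed-ris} together with the commutation result Lemma~\ref{lem:ri-Yf-non}(1), which guarantees that the progressions do not interact. Given this, the computation reduces to the one already implicitly carried out in the proof of Theorem~\ref{thm:WonY-g}, and the conclusion $r_{i,s}^2 = \mathrm{id}$ follows. I would present the proof either by this direct substitution or, more economically, by invoking that $\prod_{s} r_{i,s} = r_i$ has order two (Lemma~\ref{lem:ri-Yf-non}(2)), the factors commute (Lemma~\ref{lem:ri-Yf-non}(1)), and each $r_{i,s}$ is a nontrivial operator supported on disjoint variable sets, forcing each factor individually to be an involution.
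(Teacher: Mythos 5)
Your proposal is correct, but it proves the proposition by a genuinely different route from the paper. The arguments of yours that actually close the proof are the last two: (a) the \emph{specialization} argument --- for $n \in N_{i,s}$ the element $r_i(y_i(n)) = \frac{P_i(n-2d_i)}{P_i(n-d_i)}\, y_i(n)\, X_i(n-d_i)$ lies in the subfield generated by $\{y_i(n') : n' \in N_{i,s}\}$ and $\{y_j(n') : j \neq i\}$ (since $N_{i,s}$ is stable under shifts by $d_i$ and $F_i$ involves no $y_i$-variable), and $r_i$ and $r_{i,s}$ agree on all these generators, so $r_{i,s}^2(y_i(n)) = r_i^2(y_i(n)) = y_i(n)$; and (b) the \emph{factorization} argument --- by Theorem \ref{thm:WonY-g} and Lemma \ref{lem:ri-Yf-non}(1),(2) one has $\mathrm{id} = r_i^2 = \prod_{s'} r_{i,s'}^2$, and since each $r_{i,s'}^2$ fixes every generator $y_i(n)$ with $n \notin N_{i,s'}$, evaluating at $y_i(n)$ with $n \in N_{i,s}$ leaves exactly $r_{i,s}^2(y_i(n)) = y_i(n)$. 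Both are sound, and nontriviality of $r_{i,s}$ (hence order exactly two) is clear from \eqref{eq:Weyl-r_is}. By contrast, your preferred ``direct substitution'' route is not a proof as written: the ``telescoping relation satisfied by $P_i(n)$ under the mutation,'' which you invoke without proof, is precisely the content of Proposition \ref{prop:r_is_on_f}, and it requires the two inductive lemmas on the truncated sums $D_n^{(k)}$. That is in fact what the paper does: it establishes the explicit formula \eqref{eq:f-prop} for $r_{i,s}(P_i(n))$ and then verifies $r_{i,s}^2(y_i(n))=y_i(n)$ by direct computation, giving a self-contained argument. The trade-off is this: your route is much shorter, but it imports Theorem \ref{thm:WonY-g} --- the $W$-action of \cite{I21}, whose proof uses cluster mutations --- as a black box; the paper's computation is deliberately mutation-free (as remarked immediately after the proposition) and produces the transformation formula for $P_i(n)$ under $r_{i,s}$ as a by-product, at the cost of the inductive work you avoided.
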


We postpone the proof of this proposition 
to \S \ref{app:proof-order2}.
When $d_i = d$, $r_{i,1}$ coincides with $r_i$ \eqref{Weyl-y}, 
thus the order of $r_{i,s}$ is two.
When $d_i \neq d$, Proposition \ref{lem:Weyl-prefolding} can be proved in the same way as \cite{I21}, by applying cluster mutations. 
Our proof in \S \ref{app:proof-order2}
does not use cluster mutation.

Let $R_i$ be the subgroup of
automorphisms of $\cY_m$ generated by
$r_{i, s}$ for all $s \in \Sigma_i$.
By Lemma \ref{lem:ri-Yf-non} and Proposition \ref{lem:Weyl-prefolding},
we have an isomorphism
\[
(\Z/2\Z)^{\Sigma_i} \overset{\cong}{\longrightarrow} R_i;
\qquad
(\epsilon_s)_{s \in \Sigma_i} 
\mapsto \prod_s r_{i, s}^{\epsilon_i}.
\]
The following refines
Theorem \ref{thm:non-simply-laced},
whose proof will be completed in \S \ref{ss:pf4}.

\begin{thm}\label{thm:refined}
The $R_i$-invariant subfield $\cY_m^{R_i}$ of $\cY_m$
agrees with $\cZ_m^{(i)}$,
hence the extension 
$\cY_m/\cZ_m^{(i)}$ is Galois with group $R_i$.
Moreover, we have
\begin{equation}\label{eq:refined}
\cY_m=\cZ_m^{(i)}(\delta_{i, s}; ~s \in \Sigma_i),
\quad
\delta_{i, s}^2 \in \cZ_m^{(i)},
\quad
r_{i, s}(\delta_{i, s})=-\delta_{i, s}
\quad
\text{for any}~ s \in \Sigma_i.
\end{equation}
\end{thm}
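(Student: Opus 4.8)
The plan is to exploit the product decomposition $R_i = \prod_{s \in \Sigma_i}\langle r_{i,s}\rangle \cong (\Z/2\Z)^{\Sigma_i}$ (Lemma \ref{lem:ri-Yf-non} and Proposition \ref{lem:Weyl-prefolding}) together with the fact that $r_{i,s}$ moves only the variables $y_i(n)$ with $n \in N_{i,s}$. Write $\B := \C(y_j(n);~ j \in I\setminus\{i\},~ n \in d\Z/d'm\Z)$ for the subfield fixed pointwise by every $r_{i,s}$; by \eqref{eq:a-y-g} we have $F_i(n) \in \B$ for all $n$, and $\cY_m = \B(y_i(n);~n)$, $\cZ_m^{(i)} = \B(z_i(n);~n)$ with the $y_i(n)$ algebraically independent over $\B$. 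First I would verify $\cZ_m^{(i)} \subseteq \cY_m^{R_i}$: the $y_j(n)$ with $j\neq i$ are fixed by each $r_{i,s}$ by \eqref{eq:Weyl-r_is}, while $z_i(n)$ with $n \in N_{i,s_0}$ is fixed by $r_{i,s}$ for $s\neq s_0$ (it involves only coset-$s_0$ variables and elements of $\B$) and by $r_{i,s_0}$ because $r_i = \prod_s r_{i,s}$ fixes it (Theorem \ref{thm:I2-2}) and every factor with $s\neq s_0$ already does.

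The heart of the argument is a per-coset reduction to Section \ref{sect:simplylaced}. Fix $s \in \Sigma_i$ and enumerate $N_{i,s}$ by $n_k := (s-1)d + k d_i$ in $d\Z/d'm\Z$, a single cycle of length $|N_{i,s}| = d'm/d_i$ under $n \mapsto n+d_i$. Under the relabeling $n_k \leftrightarrow k$, the formulas \eqref{a-y}, \eqref{eq:def-fsmall}, \eqref{psi-def}, \eqref{eq:Weyl-r_is} restricted to $N_{i,s}$ become formally identical to the simply-laced system of \S\ref{sect:simplylaced} with $m$ replaced by $|N_{i,s}|$ (in particular $P_i(n)$ then has exactly $|N_{i,s}|-1$ terms), with ground field $\C(F_i(n);~n\in N_{i,s}) \subseteq \B$ over which the $y_i(n_k)$ are free, and with $r_i$ replaced by $r_{i,s}$. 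Hence the computation behind Theorem \ref{thm:g1-delta} applies and shows that the $r_{i,s}$-symmetric combination $\boldsymbol{y}_{i,s} + \boldsymbol{F}_{i,s}/\boldsymbol{y}_{i,s}$ is a polynomial in the $z_i(n)$ $(n\in N_{i,s})$ and the $F_i(n)$. Consequently
\[
\delta_{i,s}^2 = \left(\boldsymbol{y}_{i,s} + \frac{\boldsymbol{F}_{i,s}}{\boldsymbol{y}_{i,s}}\right)^2 - 4\boldsymbol{F}_{i,s} \in \B(z_i(n);~n\in N_{i,s}) \subseteq \cZ_m^{(i)},
\]
$r_{i,s}(\delta_{i,s}) = -\delta_{i,s}$, and $\B(y_i(n);~n\in N_{i,s}) = \B(z_i(n);~n\in N_{i,s})(\delta_{i,s})$. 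The crucial point is that Theorem \ref{thm:g1-delta} expresses the symmetric combination using \emph{only} the $z_i$ and the $F_i$, so $\delta_{i,s}^2$ lands in $\cZ_m^{(i)}$ and not in a larger field.

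Next I would assemble the cosets by taking composita inside $\cY_m$. Since the $N_{i,s}$ partition $d\Z/d'm\Z$, we have $\cY_m = \prod_{s}\B(y_i(n);n\in N_{i,s})$ and $\cZ_m^{(i)} = \prod_s \B(z_i(n);n\in N_{i,s})$; combined with the per-coset equalities this gives
\[
\cY_m = \prod_{s\in\Sigma_i}\B(z_i(n);n\in N_{i,s})(\delta_{i,s}) = \cZ_m^{(i)}(\delta_{i,s};~s\in\Sigma_i),
\]
which is the generation statement of \eqref{eq:refined}; together with $\delta_{i,s}^2 \in \cZ_m^{(i)}$ it yields $[\cY_m : \cZ_m^{(i)}] \le 2^{|\Sigma_i|}$. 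Finally, $R_i$ is a group of automorphisms of $\cY_m$ of order $2^{|\Sigma_i|}$, so Artin's theorem gives that $\cY_m/\cY_m^{R_i}$ is Galois with group $R_i$ and $[\cY_m : \cY_m^{R_i}] = 2^{|\Sigma_i|}$. Combining $\cZ_m^{(i)}\subseteq \cY_m^{R_i}\subseteq\cY_m$ with the degree bound forces $\cY_m^{R_i} = \cZ_m^{(i)}$, whence $\cY_m/\cZ_m^{(i)}$ is Galois with group $R_i$, completing \eqref{eq:refined} (the relation $r_{i,s'}(\delta_{i,s})=\delta_{i,s}$ for $s'\neq s$ being Lemma \ref{lem:ri-Yf-non}(3)).

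The main obstacle I anticipate is making the per-coset reduction rigorous: one must check carefully that the restriction of all the defining data to a single coset $N_{i,s}$ really is isomorphic, after the relabeling $n_k\leftrightarrow k$, to a simply-laced period-$|N_{i,s}|$ system (that $P_i(n)$ and $X_i(n)$ for $n\in N_{i,s}$ involve only coset-$s$ variables and that the term count matches), and that the algebraic-independence hypotheses needed to invoke Theorem \ref{thm:g1-delta} hold over $\C(F_i(n);n\in N_{i,s})$. Once this identification is in place, the combinatorial identities underlying Theorem \ref{thm:g1-delta} transfer unchanged, and the remaining field-theoretic bookkeeping is routine.
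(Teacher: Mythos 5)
Your proposal is correct and follows essentially the same route as the paper: the paper likewise reduces to a per-coset copy of the simply-laced computation (its Theorem \ref{thm:g2-delta}, proved ``in the precisely same manner'' as Theorem \ref{thm:g1-delta} after exactly the relabeling $n = kd_i + (s-1)d$ you describe), and then concludes by the same compositum and degree count, with Artin's theorem supplying $[\cY_m:\cY_m^{R_i}]=|R_i|=2^{d_i/d}$ against the upper bound $2^{|\Sigma_i|}$ coming from $\cY_m=\cZ_m^{(i)}(\delta_{i,s};\, s\in\Sigma_i)$ and $\delta_{i,s}^2\in\cZ_m^{(i)}$. The only organizational difference is that the paper works over $\cF=\C(F_i(n);\,n)$ and routes the containment through the intermediate fields $\cZ_m^{(i,s)}=\cY_m^{r_{i,s}}$ (Lemma \ref{lem:reduction2}) rather than over your $\B$ with a direct verification that each $z_i(n)$ is $R_i$-invariant, which changes nothing substantive.
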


\subsection{First reduction}
To prove the results in the previous subsection,
we employ a similar idea as \eqref{eq:def-cF}.
Let us define subfields of $\mathcal{Y}_m$ as follows:

\begin{equation}
\label{eq:def-cF-non}
\begin{aligned}
&\mathcal{Z}_m^{(i,s)} := 
\C\bigl(z_i(k), y_j(n);~ 
k \in N_{i,s}, ~
(j, n) \in I \times d\Z/d'm\Z,~
j \not= i ~\text{or}~n \not\in N_{i, s}\bigr),
%
\\
&\cF := \C(F_i(n); ~n \in d \Z/d'm\Z), \\
&\cY_{\cF,s} := \cF(y_i(n); ~n \in N_{i,s}), \\
&\cZ_{\cF,s} := \cF(z_i(n); ~n \in N_{i,s}).
\end{aligned}
\end{equation}
Note that for all $n \in N_{i,s}$ we have
\begin{equation}\label{eq:Xi-cYcF-non}
X_i(n), ~P_i(n), ~\delta_{i, s} \in \cY_{\cF,s}
\end{equation}
by \eqref{a-y} and \eqref{eq:def-fsmall}.

\begin{lem}\label{lem:ri-Yf-non2}
The restriction of $r_{i,s}$ to $\cZ_{\cF,s}$ is the identity,
and
we have $r_{i,s}(\cY_{\cF,s}) \subset \cY_{\cF,s}$.
\end{lem}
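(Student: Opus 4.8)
The plan is to mirror the simply-laced Lemma \ref{lem:ri-Yf}, exploiting the key combinatorial fact that $N_{i,s}$ is a coset of $d_i\Z$ inside $d\Z/d'm\Z$, so it is stable under the shifts $n \mapsto n \pm d_i$. This closure property is what keeps all the relevant quantities inside $\cY_{\cF,s}$ and what makes the complementary operators fix the $z_i(n)$.

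For the first statement I would check that $r_{i,s}$ fixes each generator of $\cZ_{\cF,s}$, namely the $F_i(n)$ and the $z_i(n)$ with $n \in N_{i,s}$. The $F_i(n)$ are products of variables $y_j$ with $j \neq i$ by \eqref{eq:a-y-g}, hence fixed by $r_{i,s}$ since the latter alters only $y_i(k)$ for $k \in N_{i,s}$. For $z_i(n)$ with $n \in N_{i,s}$, I would argue indirectly: by Theorem \ref{thm:I2-2} we know $r_i(z_i(n)) = z_i(n)$, and by Lemma \ref{lem:ri-Yf-non}(2) we may write $r_i = \prod_{s' \in \Sigma_i} r_{i,s'}$ with the factors commuting. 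For $s' \neq s$, the expression $z_i(n) = y_i(n) + F_i(n)/y_i(n+d_i)$ from \eqref{psi-def} involves only the $y_i$-variables indexed by $n$ and $n+d_i$, both lying in $N_{i,s}$ (as $N_{i,s}$ is $d_i$-periodic), together with $F_i(n)$; since $N_{i,s} \cap N_{i,s'} = \emptyset$, the operator $r_{i,s'}$ fixes $z_i(n)$. Peeling off these trivial factors from $r_i$ leaves $z_i(n) = r_i(z_i(n)) = r_{i,s}(z_i(n))$.

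For the inclusion $r_{i,s}(\cY_{\cF,s}) \subset \cY_{\cF,s}$ it suffices to check that $r_{i,s}$ sends each generator $y_i(n)$ with $n \in N_{i,s}$ into $\cY_{\cF,s}$, the $F_i(n)$ being already fixed. By \eqref{eq:Weyl-r_is},
\[
r_{i,s}(y_i(n)) = \frac{P_i(n-2d_i)}{P_i(n-d_i)}\, y_i(n)\, X_i(n-d_i),
\]
and because $n-d_i$ and $n-2d_i$ again lie in $N_{i,s}$, all three factors $P_i(n-2d_i)$, $P_i(n-d_i)$, $X_i(n-d_i)$ belong to $\cY_{\cF,s}$ by \eqref{eq:Xi-cYcF-non}; since $y_i(n) \in \cY_{\cF,s}$ as well, the claim follows.

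The only delicate point is the $r_{i,s}$-invariance of $z_i(n)$. Rather than recomputing at the refined level the identity that underlies Theorem \ref{thm:I2-2}, I expect the cleanest route to be the deduction from the global $r_i$-invariance by stripping off the commuting factors $r_{i,s'}$ with $s' \neq s$, each of which fixes $z_i(n)$ precisely because the coset $N_{i,s}$ is disjoint from $N_{i,s'}$ and closed under the shift by $d_i$. Everything else reduces to the coset bookkeeping already recorded in \eqref{eq:Xi-cYcF-non}.
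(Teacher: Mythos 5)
Your proposal is correct and follows essentially the same route as the paper, whose proof simply says the lemma "is proved in the same way as Lemma \ref{lem:ri-Yf}": the $F_i(n)$ are fixed because they involve only $y_j$ with $j \neq i$ (see \eqref{eq:a-y-g}), the $z_i(n)$ are fixed via Theorem \ref{thm:I2-2}, and $r_{i,s}(y_i(n)) \in \cY_{\cF,s}$ follows from the explicit formula \eqref{eq:Weyl-r_is} together with \eqref{eq:Xi-cYcF-non} and the $d_i$-stability of $N_{i,s}$. Your additional step --- peeling off the commuting factors $r_{i,s'}$ with $s' \neq s$ (Lemma \ref{lem:ri-Yf-non}), which fix $z_i(n)$ because the coset $N_{i,s}$ is closed under shifts by $d_i$ and disjoint from $N_{i,s'}$, so that the $r_i$-invariance furnished by Theorem \ref{thm:I2-2} transfers to $r_{i,s}$-invariance --- is precisely the detail the paper's terse citation leaves implicit, and it is a correct (and non-circular, since Lemma \ref{lem:ri-Yf-non} precedes this lemma) way to fill it in.
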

\begin{proof}
This is proved in the same way as Lemma \ref{lem:ri-Yf}, by using \eqref{eq:a-y-g}, \eqref{eq:def-fsmall}, \eqref{eq:Weyl-r_is} and \eqref{eq:Xi-cYcF-non}.
\end{proof}

In a similar way as \eqref{eq:diag-fld} 
the relations of the fields is summarized in a diagram: 
\begin{equation}\label{eq:diag-fld-non}
\xymatrix{
\cZ_m^{(i)} \ar@{}[r]|*{\subset} 
\ar@{}[dr]|{\rotatebox{45}{$\cup$}} &
\cZ_m^{(i,s)} \ar@{}[r]|*{\subset} \ar@{}[d]|{\bigcup} &
\cY_m^{r_{i,s}} \ar@{}[r]|*{\subset} \ar@{}[d]|{\bigcup} &
\cY_m  \ar@{}[d]|{\bigcup} 
\\
&
\cZ_{\cF,s} \ar@{}[r]|*{\subset} & 
\cY_{\cF,s}^{r_{i,s}} \ar@{}[r]|*{\subset}  &
\cY_{\cF,s},
}
\end{equation}
where $\mathcal{Y}_{\cF,s}^{r_{i, s}}$ is
the $r_{i,s}$-invariant subfield of $\cY_{\cF,s}$.
The following is an analogue of Lemma \ref{lem:reduction}.

\begin{lem}\label{lem:reduction2}
The assertions 
\begin{equation}\label{eq:refined2}
\cY_{\cF, s}=\cZ_{\cF, s}(\delta_{i, s}),
\quad
\delta_{i, s}^2 \in \cZ_{\cF, s},
\quad
r_{i, s}(\delta_{i, s})=-\delta_{i, s}
\quad 
\text{for any}~ s \in \Sigma_i
\end{equation}
imply Theorems 
\ref{thm:non-simply-laced} and \ref{thm:refined}.
\end{lem}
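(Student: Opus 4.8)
The plan is to deduce the two global theorems from the purely local data \eqref{eq:refined2} by a degree count together with Artin's theorem and the Galois correspondence for the elementary abelian group $R_i$. First I would record the inclusions linking the local fields $\cY_{\cF,s},\cZ_{\cF,s}$ to the global ones. Since every $F_i(n)$ is a monomial in the $y_j$ with $j\neq i$ by \eqref{eq:a-y-g}, we have $\cF\subset\C(y_j(n);j\neq i)\subset\cZ_m^{(i)}$, and since every $z_i(n)$ lies in $\cZ_m^{(i)}$ we also get $\cZ_{\cF,s}=\cF(z_i(n);n\in N_{i,s})\subset\cZ_m^{(i)}$ for each $s\in\Sigma_i$. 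Granting \eqref{eq:refined2}, the relation $\cY_{\cF,s}=\cZ_{\cF,s}(\delta_{i,s})$ expresses each $y_i(n)$ with $n\in N_{i,s}$ through $\cZ_{\cF,s}$ and $\delta_{i,s}$; letting $s$ run over $\Sigma_i$, which partitions the index set $d\Z/d'm\Z$ of the $y_i(n)$, and using $\cZ_{\cF,s}\subset\cZ_m^{(i)}$, I obtain $\cY_m=\cZ_m^{(i)}(\delta_{i,s};s\in\Sigma_i)$. Combined with $\delta_{i,s}^2\in\cZ_{\cF,s}\subset\cZ_m^{(i)}$ and $r_{i,s}(\delta_{i,s})=-\delta_{i,s}$ from \eqref{eq:refined2}, this already establishes \eqref{eq:refined}; moreover, as $\cY_m$ is the compositum over $\cZ_m^{(i)}$ of the extensions $\cZ_m^{(i)}(\delta_{i,s})$, each of degree at most two, I get the bound $[\cY_m:\cZ_m^{(i)}]\le 2^{|\Sigma_i|}$.

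Next I would supply the matching lower bound. Each $r_{i,s}$ fixes $y_j$ for $j\neq i$, fixes $z_i(n)$ for $n\in N_{i,s}$ by Lemma~\ref{lem:ri-Yf-non2}, and for $n\notin N_{i,s}$ fixes $y_i(n)$, $y_i(n+d_i)$ (since $N_{i,s}$ is stable under $n\mapsto n+d_i$) and the $F_i(n)$, hence fixes $z_i(n)=y_i(n)+F_i(n)/y_i(n+d_i)$ too; therefore $r_{i,s}$ is the identity on $\cZ_m^{(i)}$, so $\cZ_m^{(i)}\subseteq\cY_m^{R_i}$. Since $R_i\cong(\Z/2\Z)^{\Sigma_i}$ is a finite automorphism group of order $2^{|\Sigma_i|}$, Artin's theorem gives $[\cY_m:\cY_m^{R_i}]=2^{|\Sigma_i|}$, so that $2^{|\Sigma_i|}=[\cY_m:\cY_m^{R_i}]\le[\cY_m:\cZ_m^{(i)}]\le 2^{|\Sigma_i|}$. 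Equality forces $\cZ_m^{(i)}=\cY_m^{R_i}$, and Artin's theorem renders $\cY_m/\cZ_m^{(i)}$ Galois with group $R_i$, completing Theorem~\ref{thm:refined}.

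For Theorem~\ref{thm:non-simply-laced} I would invoke the Galois correspondence for $\cY_m/\cZ_m^{(i)}$. By Lemma~\ref{lem:ri-Yf-non} the involution $r_i=\prod_s r_{i,s}$ sends each $\delta_{i,s}$ to $-\delta_{i,s}$, so in the $\cZ_m^{(i)}$-basis $\{\prod_s\delta_{i,s}^{e_s};e_s\in\{0,1\}\}$ of $\cY_m$ it acts by the sign $(-1)^{\sum_s e_s}$; hence $\cY_m^{r_i}$ is the $\cZ_m^{(i)}$-span of the monomials of even total degree. I would match this with $\cZ_m^{(i)\prime}$ in \eqref{eq:def-sZprime} case by case: for $d_i/d=1$ it is $\cZ_m^{(i)}$; for $d_i/d=2$ the only even monomials are $1,\delta_{i,1}\delta_{i,2}$, giving $\cZ_m^{(i)}(\delta_{i,1}\delta_{i,2})$; and for $d_i/d=3$ the even monomials $1,\delta_{i,1}\delta_{i,2},\delta_{i,2}\delta_{i,3},\delta_{i,1}\delta_{i,3}$ all lie in $\cZ_m^{(i)}(\delta_{i,1}\delta_{i,2},\delta_{i,2}\delta_{i,3})$, since $\delta_{i,1}\delta_{i,3}=(\delta_{i,1}\delta_{i,2})(\delta_{i,2}\delta_{i,3})/\delta_{i,2}^2$ with $\delta_{i,2}^2\in\cZ_m^{(i)}$, and the dimension count $[\cY_m^{r_i}:\cZ_m^{(i)}]=|R_i|/2=4$ shows this exhausts $\cY_m^{r_i}$. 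In each case $\cY_m^{r_i}=\cZ_m^{(i)\prime}$. The principal difficulty, already dissolved once \eqref{eq:refined2} is in hand, is the passage from the local quadratic extensions to the global degree $2^{|\Sigma_i|}$; the one point deserving care is checking that $\delta_{i,s}^2$ and the products $\delta_{i,s}\delta_{i,s'}$ genuinely descend into $\cZ_m^{(i)}$, which the inclusion $\cZ_{\cF,s}\subset\cZ_m^{(i)}$ and the $R_i$-action secure.
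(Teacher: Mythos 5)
Your proposal is correct, and its skeleton is the same as the paper's: sandwich $[\cY_m:\cZ_m^{(i)}]$ between the lower bound coming from $\cZ_m^{(i)}\subseteq\cY_m^{R_i}$ plus Artin's theorem for $R_i$, and the upper bound $2^{|\Sigma_i|}$ coming from the compositum of the quadratic extensions generated by the $\delta_{i,s}$. Two steps are executed differently. First, for the lower bound the paper does not check directly that each $r_{i,s}$ fixes $\cZ_m^{(i)}$; it instead reruns the compositum argument of Lemma \ref{lem:reduction} once for each $s$, using the intermediate fields $\cZ_m^{(i,s)}$ of \eqref{eq:def-cF-non} and the diagram \eqref{eq:diag-fld-non}, to obtain the sharper per-reflection identification $\cY_m^{r_{i,s}}=\cZ_m^{(i,s)}$, and then intersects: $\cY_m^{R_i}=\bigcap_s\cZ_m^{(i,s)}\supset\cZ_m^{(i)}$. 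Your direct verification (that $r_{i,s}$ fixes the $y_j$ with $j\neq i$, the $z_i(n)$ with $n\in N_{i,s}$ by Lemma \ref{lem:ri-Yf-non2}, and the remaining $z_i(n)$ because $N_{i,s}$ is stable under $n\mapsto n+d_i$) is shorter and self-contained, at the cost of not identifying the individual fixed fields $\cY_m^{r_{i,s}}$, which the paper gets for free. Second, the paper's passage from Theorem \ref{thm:refined} to Theorem \ref{thm:non-simply-laced} is compressed into one sentence citing Lemma \ref{lem:ri-Yf-non} and Proposition \ref{lem:Weyl-prefolding}; your case-by-case Galois-correspondence computation — the sign action $(-1)^{\sum_s e_s}$ of $r_i$ on the monomial basis $\prod_s\delta_{i,s}^{e_s}$, the degree count $[\cY_m^{r_i}:\cZ_m^{(i)}]=|R_i|/2$, and the identity $\delta_{i,1}\delta_{i,3}=(\delta_{i,1}\delta_{i,2})(\delta_{i,2}\delta_{i,3})/\delta_{i,2}^2$ needed when $d_i=3d$ — is precisely the content that sentence suppresses, so writing it out is a genuine gain in completeness. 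Both your argument and the paper's rely on the isomorphism $R_i\cong(\Z/2\Z)^{\Sigma_i}$ asserted before Theorem \ref{thm:refined}, so neither has an advantage on that point.
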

\begin{proof}
Since $\cY_m$ is 
the composition field of 
$\cY_{\cF, s}$ and $\cZ_m^{(i, s)}$,
the same argument as Lemma \ref{lem:reduction}
shows that \eqref{eq:refined2} implies
$\cZ_m^{(i, s)}=\cY_m^{r_{i, s}}$.
It then follows that
\[ \cY_m^{R_i} 
= \bigcap_{s \in \Sigma_i} \cY_m^{r_{i, s}}
= \bigcap_{s \in \Sigma_i} \cZ_m^{(i, s)}
\supset \cZ_m^{(i)},
\]
and hence 
$[\cY_m : \cZ_m^{(i)}] \geq |R_i|=2^{d_i/d}$.
On the other hand, 
$\cY_m$ is also the composition field of 
$\cZ_m^{(i)}$ and $\cY_{\cF, s}$ 
where $s$ ranges over $\Sigma_i$.
Thus \eqref{eq:refined2} implies \eqref{eq:refined}.
In particular this shows that
$[\cY_m : \cZ_m^{(i)}] \le 2^{d_i/d}$,
whence $\cY_m^{R_i}=\cZ_m^{(i)}$.
We have proved Theorem \ref{thm:refined}.
Theorem \ref{thm:non-simply-laced}
then follows Lemma \ref{lem:ri-Yf-non} and
Proposition \ref{lem:Weyl-prefolding}.
\end{proof}

\subsection{The proof}\label{ss:pf4}
In order to prove \eqref{eq:refined2},
we introduce the Laurent polynomial rings 
\[
\mathcal{Y}_{\infty} := \cF[\wt{y}_i(n)^{\pm 1}; n \in d\Z]
~ \supset ~ 
\mathcal{Y}_{\infty,s} := \cF[\wt{y}_i(n)^{\pm 1}; n \in \wt{N}_{i,s}]
\qquad (s \in \Sigma_i)
\]
on the set of commuting variables
$\wt{y}_i(n)$ over $\cF$,
where we put $\wt{N}_{i,s} := d_i \Z + (s-1)d \subset d\Z$.
We also introduce its $\cF$-subalgebra
\begin{equation}\label{psi-def3-s}
\mathcal{Z}_{\infty,s}:= \cF[\wt{z}_i(n);~ n \in \wt{N}_{i,s}] \subset \cY_{\infty,s},
\qquad
  \wt{z}_i(n) = \wt{y}_i(n) + \frac{F_i(n \bmod d'm)}{\wt{y}_i(n+d_i)} .
\end{equation}
One checks 
that the set $\{ \wt{z}_i(n) ;~ n \in d\Z \}$ 
is algebraically independent over $\cF$
and thus
$\cZ_{\infty,s}$ is a polynomial ring over $\cF$,
as in Lemma \ref{lem:alg-indep}.
We generalize Definition \ref{lem:shift-op} as follows.
\begin{definition}\label{lem:shift-op-non}
\begin{itemize}
\item[(1)] 
We define a $\C$-algebra automorphism $\tau_{m} : \cY_{m} \to \cY_{m}$ given by $\tau_m y_j(n) = y_j(n+d)$ for any $(j,n) \in I \times d \Z / d'm \Z$. 
We remark that $\tau_m$ restricts to an isomorphism $\cY_{m,s} \cong \cY_{m,s+1}$ for 
$s \in \Sigma_i$, where $s+1$ is understood as $1$ if $s=d_i/d$. 
Thus a composition $\tau_m^{\frac{d_i}{d}}$ yields an automorphism of $\cY_{m,s}$ for $s \in \Sigma_i$.
We also remark that
$\tau_m$ restricts to an automorphism $\tau_\cF$ of $\cF$.
\item[(2)]
We define a $\C$-algebra automorphism $\tau_{\infty} : \cY_{\infty} \to \cY_{\infty}$ by $\tau_m \wt{y}_i(n) = \wt{y}_i(n+d)$ for any $n \in d \Z$
and $\tau_\infty|_{\cF}=\tau_\cF$.
Similarly to (1), $\tau_\infty^{\frac{d_i}{d}}$ restricts to 
an automorphism of 
$\cY_{\infty,s}$ for $s \in \Sigma_i$.
\item[(3)]
Let $\pi : \cY_{\infty, s} \to \cY_{\cF,s}$ be an $\cF$-algebra homomorphism characterized by
$\pi \wt{y}_i(n)=y_i(n \bmod d' m)$
for all $n \in \wt{N}_{i, s}$. 
It holds that 
$\tau_m^{\frac{d_i}{d}} \circ \pi 
= \pi \circ \tau_\infty^{\frac{d_i}{d}}$.
\end{itemize}
\end{definition}

In the rest of this section, besides $i \in I$ and $m > 1$ we fix $s \in \Sigma_i$. We define polynomials $\wt{A}^{(k)}, \wt{C}^{(k)}$ in $\cZ_{\infty,s}$ as follows. First we define $\wt{C}^{(k)}$ for $k \in \Z_{>0}$ by
\begin{align}
  \label{eq:g1-C12-non}
  &\wt{C}^{(1)} = 1, ~~ \wt{C}^{(2)} = \wt{z}_i(2d_i+(s-1)d),
  \\
  \label{eq:g1-Ck-non}
  \begin{split}
  &\wt{C}^{(k)} = \wt{z}_i(kd_i+(s-1)d)\, \wt{C}^{(k-1)} 
  \\
  & \qquad \quad 
  - F_i((k-1)d_i+(s-1)d \bmod d'm)\, \wt{C}^{(k-2)} \quad (k \ge 3),
  \end{split}
\end{align} 
and next, for $k \geq 2$ we define $\wt{A}^{(k)}$ as
\begin{align}
\label{eq:g1-A-non}
  \begin{split}
  &\wt{A}^{(k)} = \wt{z}_i(d_i+(s-1)d) \wt{C}^{(k)} - F_i(kd_i+(s-1)d \bmod d'm) \,\wt{C}^{(k-1)} 
  \\ 
  & \qquad \quad - \tau_{\infty}^{\frac{d_i}{d}} (F_i(kd_i+(s-1)d \bmod d'm)\, \wt{C}^{(k-1)}).
  \end{split} 
\end{align} 
Further we define elements of $\cY_{\infty, s}$ 
for $k \geq 2$ and $n \in N_{i, s}$ by 
\begin{equation}
\wt{D}_n^{(k)} = 
1 + \sum_{p=0}^{k-2} \wt{X}_i(n)\wt{X}_i(n-d_i)\cdots \wt{X}_i(n-p d_i),
\quad
\wt{X}_i(n) = \frac{F_i(n \bmod d'm)}{\wt{y}_i(n) \wt{y}_i(n+d_i)}.
\end{equation} 
For $n \in d \Z$ and $2 \leq k \leq d'm/d_i$, we define
\begin{align}\label{eq:g1-ACD-non}
A^{(k)}:=\pi(\wt{A}^{(k)}) \in \cZ_{\cF,s},~
\quad
C^{(k)}:=\pi(\wt{C}^{(k)}) \in \cZ_{\cF,s},
\quad
D_n^{(k)}:=\pi(\wt{D}_n^{(k)}) \in \cY_{\cF,s}.
\end{align}

Now the following lemma and theorem are proved 
in the precisely same manner as 
Lemma \ref{lem:g1-C-f} and Theorem \ref{thm:g1-delta}.
We omit the details.

\begin{lem}\label{lem:g1-C-f-non}
For $s \in \Sigma_i$ it is satisfied that 
$$
\wt{C}^{(k)} 
= \wt{D}_{k d_i +(s-1)d}^{(k)}\, \wt{y}_i(2 d_i +(s-1)d) \wt{y}_i(3 d_i +(s-1)d)\cdots \wt{y}_i(k d_i +(s-1)d)
$$ 
in $\cY_{\infty,s}$. 
\end{lem}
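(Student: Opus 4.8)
The plan is to transplant the proof of Lemma \ref{lem:g1-C-f} verbatim, the only change being a uniform translation of indices: the role played there by an integer $n$ is now played by $n_k := k d_i + (s-1)d \in \wt{N}_{i,s}$, and the unit shift $n \mapsto n \pm 1$ is replaced throughout by the shift $n \mapsto n \pm d_i$. Writing $G_k$ for the right-hand side of the asserted identity, namely
\[
G_k := \wt{D}_{n_k}^{(k)}\, \wt{y}_i(n_2)\, \wt{y}_i(n_3) \cdots \wt{y}_i(n_k),
\]
I would prove $\wt{C}^{(k)} = G_k$ in $\cY_{\infty,s}$ by induction on $k$, treating $k=2,3$ as base cases and running the induction for $k \ge 4$.

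For the base cases, the $d_i$-shifted analogue of \eqref{eq:g1-f2}, namely $\wt{z}_i(n) = (1 + \wt{X}_i(n))\,\wt{y}_i(n)$, gives $G_2 = (1+\wt{X}_i(n_2))\wt{y}_i(n_2) = \wt{z}_i(n_2) = \wt{C}^{(2)}$ directly from \eqref{eq:g1-C12-non}, and a short computation settles $k=3$ exactly as before, the extra terms cancelling by the second expression for $\wt{D}_n^{(k)}$ in its definition. The engine of the induction is the elementary identity
\[
(1+\wt{X}_i(n_k))\,\wt{D}^{(k-1)}_{n_{k-1}} = \wt{D}^{(k)}_{n_k} + \wt{X}_i(n_{k-1})\,\wt{D}^{(k-2)}_{n_{k-2}},
\]
obtained by applying the $d_i$-shifted analogue of the first formula in \eqref{eq:g1-f} twice, using $n_k - d_i = n_{k-1}$.

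Granting this, the inductive step shows that $G_k$ obeys the same recurrence \eqref{eq:g1-Ck-non} as $\wt{C}^{(k)}$. Multiplying the displayed identity by $\wt{y}_i(n_2)\cdots\wt{y}_i(n_k)$ and using $\wt{z}_i(n_k) = (1+\wt{X}_i(n_k))\wt{y}_i(n_k)$ expresses $\wt{z}_i(n_k)\,G_{k-1}$ as $G_k$ plus the term $\wt{X}_i(n_{k-1})\,\wt{D}^{(k-2)}_{n_{k-2}}\,\wt{y}_i(n_2)\cdots\wt{y}_i(n_k)$; the definition of $\wt{X}_i$ together with $n_{k-1}+d_i = n_k$ rewrites the latter as $F_i(n_{k-1} \bmod d'm)\,G_{k-2}$, whence $\wt{z}_i(n_k)\,G_{k-1} - F_i(n_{k-1}\bmod d'm)\,G_{k-2} = G_k$. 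Since $G_k$ and $\wt{C}^{(k)}$ satisfy the same two-term recurrence with the same initial data, they coincide for all $k$.

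I expect no genuine obstacle beyond bookkeeping. The one point deserving care is checking that every index $n_j = j d_i + (s-1)d$ appearing above lies in $\wt{N}_{i,s}$, so that the corresponding $\wt{y}_i$, $\wt{z}_i$, $\wt{X}_i$ belong to $\cY_{\infty,s}$ and $\cZ_{\infty,s}$, and that the reduction $n_{k-1}\bmod d'm$ matches the argument of $F_i$ appearing in \eqref{eq:g1-Ck-non}. Both are immediate from $\wt{N}_{i,s} = d_i\Z + (s-1)d$ and the defining formulas, so the argument of Lemma \ref{lem:g1-C-f} transfers without further change.
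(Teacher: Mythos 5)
Your proposal is correct and takes essentially the same approach as the paper: the paper's own proof of Lemma \ref{lem:g1-C-f-non} simply states that it is proved ``in the precisely same manner as Lemma \ref{lem:g1-C-f}'' and omits the details, and your argument carries out exactly that transplantation, replacing $n$ by $n_k = k d_i + (s-1)d$ and the unit shift by the $d_i$-shift, with the base cases $k=2,3$ and the two-term recurrence \eqref{eq:g1-Ck-non} matched term by term. The bookkeeping you flag (membership of all indices in $\wt{N}_{i,s}$ and the agreement of $F_i(n_{k-1} \bmod d'm)$ with the coefficient in \eqref{eq:g1-Ck-non}) is indeed the only point of care, and it checks out.
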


\begin{thm}\label{thm:g2-delta}
\begin{enumerate}
\item 
We have $\displaystyle{A^{(d'm/d_i)} = \boldsymbol{y}_{i,s} + \frac{\boldsymbol{F}_{i,s}}{\boldsymbol{y}_{i,s}}}$ in $\cY_{\cF,s}$. 
\item 
We have $2 (y_i(1) C^{(d'm/d_i)}- F_i(d_i m \mod d'm) C^{((d'm/d_i)-1)})- A^{(d'm/d_i)}=\delta_{i,s}$.
In particular, this element is invariant under $\tau_m^{\frac{d_i}{d}}$. 
\item 
We have 
$\cY_{\cF,s}=\cZ_{\cF, s}(\delta_{i,s})$,~
$(\delta_{i,s})^2 
=(A^{(d'm/d_i)})^2-4 \boldsymbol{F}_{i,s} \in \cZ_{\cF,s}$
and 
$r_{i,s}(\delta_{i,s})=-\delta_{i,s}$.
\end{enumerate}
\end{thm}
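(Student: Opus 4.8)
The plan is to carry over the three-part argument of Theorem~\ref{thm:g1-delta} essentially verbatim, the only changes being that the index group $\Z/m\Z$ is replaced by the progression $N_{i,s}$ and the unit shift $\tau_\infty$ by $\tau_\infty^{d_i/d}$. Set $M := d'm/d_i = |N_{i,s}|$. The map $j \mapsto j d_i + (s-1)d \bmod d'm$ is a bijection $\{1,\dots,M\} \to N_{i,s}$ intertwining $j \mapsto j+1$ with $\tau_\infty^{d_i/d}$, and under it the recursions \eqref{eq:g1-C12-non}--\eqref{eq:g1-A-non} become \eqref{eq:g1-C12}--\eqref{eq:g1-A}; throughout $\cF$ serves as the field of coefficients (only finitely many of the $F_i(n)$ occur). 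With this dictionary every cancellation used below is one already performed in \S\ref{sect:simplylaced}.

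First I would prove (1). Applying $\pi$ to Lemma~\ref{lem:g1-C-f-non} expresses $z_i(\cdot)\,C^{(M)}$, $C^{(M-1)}$ and $\tau_m^{d_i/d}(C^{(M-1)})$ through the $D^{(\bullet)}_\bullet$ and the $X_i(n)$, using the two recursions satisfied by $\wt D^{(k)}_n$ (the analogues of \eqref{eq:g1-f}) together with \eqref{a-y}. Substituting these into the image under $\pi$ of \eqref{eq:g1-A-non} makes all the intermediate $D$-terms cancel, leaving $A^{(M)} = \bigl(1 + \prod_{n \in N_{i,s}} X_i(n)\bigr)\boldsymbol{y}_{i,s}$. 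The decisive step is the wrap-around identity $\prod_{n \in N_{i,s}} X_i(n) = \boldsymbol{F}_{i,s}/\boldsymbol{y}_{i,s}^2$, immediate from \eqref{a-y} once $n$ runs over the whole cycle $N_{i,s}$ (which is stable under $n \mapsto n+d_i$); it gives $A^{(M)} = \boldsymbol{y}_{i,s} + \boldsymbol{F}_{i,s}/\boldsymbol{y}_{i,s}$.

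For (2) the same manipulation applied to the combination in \eqref{eq:g1-A-non} stripped of its symmetrizing last term collapses, again by Lemma~\ref{lem:g1-C-f-non}, to $\boldsymbol{y}_{i,s}$; subtracting $A^{(M)}$ from twice this and using (1) yields $2\boldsymbol{y}_{i,s} - (\boldsymbol{y}_{i,s} + \boldsymbol{F}_{i,s}/\boldsymbol{y}_{i,s}) = \delta_{i,s}$, which is $\tau_m^{d_i/d}$-invariant since $\boldsymbol{y}_{i,s}$ and $\boldsymbol{F}_{i,s}$ are. For (3), as $A^{(M)}, C^{(M)}, C^{(M-1)} \in \cZ_{\cF,s}$, part (2) expresses the variable $y_i(d_i+(s-1)d)$ (the $j=1$ term, denoted $y_i(1)$ in the statement) as an element of $\cZ_{\cF,s}(\delta_{i,s})$; applying $\tau_m^{d_i/d}$ repeatedly, which fixes $\delta_{i,s}$, places every $y_i(n)$ with $n \in N_{i,s}$ in $\cZ_{\cF,s}(\delta_{i,s})$, so $\cY_{\cF,s} = \cZ_{\cF,s}(\delta_{i,s})$. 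Combining $A^{(M)} = \boldsymbol{y}_{i,s} + \boldsymbol{F}_{i,s}/\boldsymbol{y}_{i,s}$ with $\delta_{i,s} = \boldsymbol{y}_{i,s} - \boldsymbol{F}_{i,s}/\boldsymbol{y}_{i,s}$ gives $\delta_{i,s}^2 = (A^{(M)})^2 - 4\boldsymbol{F}_{i,s} \in \cZ_{\cF,s}$. Finally $r_{i,s}$ fixes $\cZ_{\cF,s}$ by Lemma~\ref{lem:ri-Yf-non2}, hence fixes $\delta_{i,s}^2$, so $r_{i,s}(\delta_{i,s}) = \pm\delta_{i,s}$; the sign is $-$ because otherwise $\delta_{i,s}$ would be $r_{i,s}$-invariant and $\cY_{\cF,s}=\cZ_{\cF,s}(\delta_{i,s})$ would be fixed by the order-two automorphism $r_{i,s}$, which is absurd. (Alternatively one computes $r_{i,s}(\boldsymbol{y}_{i,s}) = \boldsymbol{F}_{i,s}/\boldsymbol{y}_{i,s}$ directly from \eqref{eq:Weyl-r_is}, the product $\prod_n P_i(n-2d_i)/P_i(n-d_i)$ telescoping to $1$ around the cycle.)

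I expect no conceptual obstacle: the entire content is the telescoping in (1)--(2), identical to the simply-laced case. The one point deserving care is the passage through $\pi$ from $\cY_{\infty,s}$ to $\cY_{\cF,s}$, namely verifying that the cycle $N_{i,s}$ has length exactly $M = d'm/d_i$ and that the indices inside $F_i(\,\cdot\,\bmod d'm)$ close up after one full loop, so that Lemma~\ref{lem:g1-C-f-non} and the $\wt D$-recursions may be quoted rather than re-derived. Once the bijection above is set up cleanly this is automatic, and the remaining effort is purely organizational.
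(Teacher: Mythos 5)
Your proposal is correct and takes essentially the same route as the paper: the paper itself proves Theorem \ref{thm:g2-delta} by declaring it ``proved in the precisely same manner as Lemma \ref{lem:g1-C-f} and Theorem \ref{thm:g1-delta}'' and omitting the details, and your re-indexing dictionary $j \mapsto jd_i+(s-1)d$, the stability of $N_{i,s}$ under $n \mapsto n+d_i$, and the wrap-around identity $\prod_{n \in N_{i,s}} X_i(n) = \boldsymbol{F}_{i,s}/\boldsymbol{y}_{i,s}^2$ are exactly the details being omitted. The only cosmetic slip is in (2), where the quantity that collapses to $\boldsymbol{y}_{i,s}$ is $y_i(1)C^{(M)}-F_i(\cdot)C^{(M-1)}$ (with $y_i(1)$, as in the theorem's statement) rather than the $z_i$-combination obtained from \eqref{eq:g1-A-non} by deleting its last term; your subsequent computation uses the correct quantity, so nothing is affected.
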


\begin{proof}[Proof of 
Theorems \ref{thm:non-simply-laced} and \ref{thm:refined}]
By Lemma \ref{lem:reduction2},
it suffices to prove \eqref{eq:refined2},
which is nothing but Theorem \ref{thm:g2-delta} (3).
\end{proof}

\subsection{Proof of Proposition \ref{lem:Weyl-prefolding}}
\label{app:proof-order2}
We introduce a key lemma to prove Proposition \ref{lem:Weyl-prefolding}.

\begin{prop}\label{prop:r_is_on_f} 
For $i \in I$, $s \in {\Sigma}_i$ and $n \in N_{i,s}$ we have 
\begin{align}\label{eq:f-prop}
  r_{i,s}(P_i(n)) = P_i(n-d_i) \frac{X_i(n)}{\displaystyle{\prod_{k \in d_i \Z / d'm \Z} X_i(n + k)}} \quad \text{ in } \cY_{\cF,s}.
\end{align}
\end{prop}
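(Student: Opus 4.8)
The plan is to reduce the whole statement to two elementary identities followed by a single telescoping sum. Throughout I would write $M := d'm/d_i = |N_{i,s}|$ and abbreviate $x_j := X_i(n-d_i j)$ and $p_j := P_i(n-d_i j)$ for $j \in \Z$. Since $N_{i,s}$ is a single coset of $d_i\Z/d'm\Z$, all arguments stay inside $N_{i,s}$ and these quantities are cyclic of period $M$; in particular $p_M=p_0$ because $d_iM=d'm$, and the product in the statement is
\[
\Pi := \prod_{k \in d_i\Z/d'm\Z} X_i(n+k) = x_0 x_1 \cdots x_{M-1} \in \cY_{\cF,s},
\]
which is independent of the base point $n$ within $N_{i,s}$. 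The target identity then reads $r_{i,s}(P_i(n)) = p_1\, x_0/\Pi$.

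First I would record a \emph{folding recursion} for $P_i$ coming directly from \eqref{eq:def-fsmall}: factoring $X_i(n)$ out of every nonconstant summand and recognizing the remaining series as $P_i(n-d_i)$ with its top term $x_1\cdots x_{M-1}=\Pi/x_0$ removed gives
\[
P_i(n) = 1 - \Pi + X_i(n)\,P_i(n-d_i), \qquad\text{equivalently}\qquad p_j = 1-\Pi + x_j\,p_{j+1}.
\]
The essential consequence is that the combination $p_j - x_j\,p_{j+1} = 1-\Pi$ does not depend on $j$.

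Second I would compute the action of $r_{i,s}$ on $X_i$. Inspecting each case of \eqref{eq:a-y-g} shows that $F_i(n)$ involves only the variables $y_j$ with $j \neq i$, so it is fixed by $r_{i,s}$; and for $m \in N_{i,s}$ both $y_i(m)$ and $y_i(m+d_i)$ are moved according to \eqref{eq:Weyl-r_is}. Substituting the two transformed factors into \eqref{a-y} and cancelling the overlapping $P_i$-ratio yields, for $m \in N_{i,s}$,
\[
r_{i,s}(X_i(m)) = \frac{P_i(m)}{P_i(m-2d_i)\,X_i(m-d_i)}, \qquad\text{that is}\qquad r_{i,s}(x_j) = \frac{p_j}{p_{j+2}\,x_{j+1}}.
\]

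Third I would expand $r_{i,s}(P_i(n))$ through \eqref{eq:def-fsmall} and the formula just obtained. Writing $y_\ell := x_1\cdots x_\ell$, the partial product $r_{i,s}(x_0)\cdots r_{i,s}(x_k)$ collapses to $p_0 p_1/(p_{k+1}p_{k+2}\,y_{k+1})$, so that $r_{i,s}(P_i(n)) = 1 + p_0 p_1 \sum_{\ell=1}^{M-1}(p_\ell p_{\ell+1} y_\ell)^{-1}$. The one step demanding care — the main obstacle — is telescoping this sum; here the folding recursion is exactly what is needed, since the $j$-independence of $p_j - x_j p_{j+1}$ gives
\[
\frac{1}{p_\ell p_{\ell+1} y_\ell} = \frac{1}{1-\Pi}\left(\frac{1}{p_{\ell+1} y_\ell} - \frac{1}{p_\ell y_{\ell-1}}\right),
\]
so the sum equals $\tfrac{1}{1-\Pi}\bigl(\tfrac{1}{p_0 y_{M-1}} - \tfrac{1}{p_1}\bigr)$ once the periodicity $p_M=p_0$ closes the telescope. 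Feeding this back and clearing denominators reduces the claim to $p_0-(1-\Pi) = \Pi\,p_1/y_{M-1}$, which is precisely the folding recursion $p_0-(1-\Pi)=x_0 p_1$ combined with $\Pi = x_0\,y_{M-1}$. This produces $r_{i,s}(P_i(n)) = p_1\,x_0/\Pi = P_i(n-d_i)\,X_i(n)/\Pi$, completing the proof.
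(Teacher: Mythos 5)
Your proof is correct, but it takes a genuinely different route from the paper's. Both arguments rest on the same two elementary inputs: the constancy of $P_i(n)-X_i(n)P_i(n-d_i)$ (your ``folding recursion'', which is the paper's identity \eqref{eq:f-f}), and the action formula $r_{i,s}(X_i(m))=P_i(m)/\bigl(P_i(m-2d_i)\,X_i(m-d_i)\bigr)$ (the paper's \eqref{eq:r-on-X}), which you derive correctly from \eqref{eq:Weyl-r_is} together with the $r_{i,s}$-invariance of the $F_i(n)$. From there the paper reduces to $s=1$ by conjugating with the shift $\tau_m$ and argues by induction on $k$ through the truncated sums $D_n^{(k)}$: it first proves the auxiliary identity \eqref{eq:D-X-f}, then the transformation law \eqref{eq:r-f-proof} for every $D_n^{(k)}$, and obtains the proposition as the case $k=d'm/d_i$. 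You instead keep $s$ arbitrary, expand $r_{i,s}(P_i(n))$ term by term --- the collapse of each partial product to $p_0p_1/(p_{k+1}p_{k+2}y_{k+1})$ in your notation is right --- and then telescope the sum via the partial-fraction identity $\frac{1}{p_\ell p_{\ell+1}y_\ell}=\frac{1}{1-\Pi}\bigl(\frac{1}{p_{\ell+1}y_\ell}-\frac{1}{p_\ell y_{\ell-1}}\bigr)$, which is again an immediate consequence of the folding recursion. The trade-off: the paper's induction yields the finer statement \eqref{eq:r-f-proof}, namely how $r_{i,1}$ transforms every truncation $D_n^{(k)}$, at the cost of having to guess that formula and verify the inductive step; your telescoping goes straight to the final answer and is shorter. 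One point you should make explicit: your telescoping divides by $1-\Pi$, so you need to observe that $\Pi=\boldsymbol{F}_{i,s}/\boldsymbol{y}_{i,s}^2\neq 1$ in the field $\cY_{\cF,s}$ (clear, since the $F_i(n)$ and $y_i(n)$ are independent indeterminates); the paper's manipulations never divide by this quantity, so it avoids even that trivial check.
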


We reduce the proposition to the case $s=1$ 
by using $\tau_m$ from Definition \ref{lem:shift-op-non} (1),
which verifies $r_{i, s+1} \circ \tau_m = \tau_m \circ r_{i, s}$.
Note that in this case we have $N_{i,1} = d_i \Z / d'm \Z$.
To ease the notations we write $X_n$ and $P_n$ for $X_i(n d_i)$ and $P_i(n d_i)$ in $\cY_{\cF,1}$ respectively.

Note that for $n \in N_{i,1}$ we have $X_n P_{n-1}-P_n = \prod_{k \in N_{i,1}} X_k - 1$ which does not depend on $n$.
In particular, for any $n, n' \in N_{i,1}$ it holds that
\begin{align}\label{eq:f-f}
  X_n P_{n-1}+ P_{n'} = X_{n'} P_{n'-1} + P_n \quad \text{ in } \cY_{\cF,1}. 
\end{align}
Also note that for $n \in N_{i,1}$ and $2 \leq k \leq d'm/d_i$,  
$D_n^{(k)}$ defined at \eqref{eq:g1-ACD-non} is now written as  
\begin{equation}\label{eq:g-def-small-f}
D_n^{(k)} = 1 + \sum_{p=0}^{k-2} X_n X_{n-1}\cdots X_{n-p}.
\end{equation} 

\begin{lem}
We have the following formula:
\begin{align}\label{eq:D-X-f}
&D_n^{(k)} X_{n-k} P_{n-k-1}+P_{n}=D_n^{(k+1)} P_{n-k} \quad \text{ for } 2 \leq k \leq \frac{d'm}{d_i} - 1. 
\end{align}
\end{lem}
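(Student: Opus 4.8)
The plan is to prove \eqref{eq:D-X-f} for all $n \in N_{i,1}$ simultaneously by induction on $k$, using only two structural facts already available in $\cY_{\cF,1}$: the one-step recursion for $D_n^{(k)}$ coming from \eqref{eq:g-def-small-f}, and the ``constancy'' relation \eqref{eq:f-f}. Writing $M := d'm/d_i = |N_{i,1}|$ and $\Pi := \prod_{k \in N_{i,1}} X_k$, I would first record from \eqref{eq:g-def-small-f} the recursion
\[
D_n^{(k)} = 1 + X_n\, D_{n-1}^{(k-1)} \qquad (k \ge 2),
\]
with the convention $D_n^{(1)} = 1$ (empty sum), and rewrite \eqref{eq:f-f} as $X_n P_{n-1} = P_n + \Pi - 1$, valid for every $n \in N_{i,1}$.

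For the base case I would verify $k=1$, that is $X_{n-1} P_{n-2} + P_n = D_n^{(2)} P_{n-1} = (1+X_n)P_{n-1}$: substituting $X_{n-1}P_{n-2}=P_{n-1}+\Pi-1$ and $X_n P_{n-1}=P_n+\Pi-1$ collapses both sides to $P_{n-1}+P_n+\Pi-1$, so the case $k=1$ is merely a restatement of \eqref{eq:f-f}.

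For the inductive step I would compute the difference $D_n^{(k+1)} P_{n-k} - D_n^{(k)} X_{n-k} P_{n-k-1}$ and show it equals $P_n$. Feeding in the recursion for both $D_n^{(k+1)}=1+X_n D_{n-1}^{(k)}$ and $D_n^{(k)}=1+X_n D_{n-1}^{(k-1)}$ splits this difference into
\[
\bigl(P_{n-k} - X_{n-k}P_{n-k-1}\bigr) + X_n\bigl(D_{n-1}^{(k)}P_{n-k} - D_{n-1}^{(k-1)} X_{n-k} P_{n-k-1}\bigr).
\]
The first parenthesis equals $1-\Pi$ by \eqref{eq:f-f}, while the second parenthesis is exactly the instance of \eqref{eq:D-X-f} with $k$ replaced by $k-1$ and $n$ by $n-1$, hence equals $P_{n-1}$ by the induction hypothesis. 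The total is therefore $(1-\Pi)+X_n P_{n-1}=(1-\Pi)+(P_n+\Pi-1)=P_n$, as desired.

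Once the right objects are matched the computation is essentially forced; the only real care needed is bookkeeping the index shifts, so that it is the $n\to n-1$, $k \to k-1$ shifted instance of \eqref{eq:D-X-f} that surfaces after applying the $D$-recursion. Since $P$ and $X$ are periodic of period $M$ on $N_{i,1}$, there is no boundary subtlety and the induction runs uniformly over $2 \le k \le M-1$. The main obstacle, and it is a mild one, is arranging the induction over all $n$ at once so that the shifted hypothesis is genuinely available at each level; with that in place, \eqref{eq:f-f} does all the remaining arithmetic.
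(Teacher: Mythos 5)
Your proof is correct, but it takes a genuinely different route from the paper's. The paper proves \eqref{eq:D-X-f} in one shot, without induction: it adds the auxiliary sum $\sum_{p=0}^{k-2} X_n X_{n-1} \cdots X_{n-p} P_{n-p-1}$ to the left-hand side, expands $D_n^{(k)}$ via \eqref{eq:g-def-small-f}, applies \eqref{eq:f-f} termwise to turn each $X_{n-k}P_{n-k-1}+P_{n-p-1}$ into $X_{n-p-1}P_{n-p-2}+P_{n-k}$, and regroups so that the same auxiliary sum reappears and cancels. Your induction on $k$ (uniform in $n$, with the hypothesis invoked at the shifted pair $(n-1,k-1)$) is the rolled-up version of exactly this mechanism: the paper's auxiliary sum is precisely the accumulation of the correction terms that your inductive step produces one at a time. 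What your route buys is locality and transparency --- each step is a two-line computation using only the recursion $D_n^{(k)}=1+X_n D_{n-1}^{(k-1)}$ (immediate from \eqref{eq:g-def-small-f}) and the constancy relation $X_n P_{n-1}-P_n=\Pi-1$ with $\Pi=\prod_{k\in N_{i,1}}X_k$, and there is no auxiliary sum to guess. The small price is bookkeeping: you need the convention $D_n^{(1)}=1$ so that the base case $k=1$ (which lies outside the range stated in the lemma, but is just a restatement of \eqref{eq:f-f}) is available, and you must run the induction simultaneously over all $n$ so that the shifted hypothesis exists at each level; you handle both points correctly, and periodicity modulo $d'm/d_i$ indeed removes any boundary issue.
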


\begin{proof}
The sum of 
$\sum_{p=0}^{k-2} X_n X_{n-1} \cdots X_{n-p} P_{n-p-1}$
and the l.h.s. of \eqref{eq:D-X-f} is calculated as follows:
\begin{align*}
  &D_n^{(k)} X_{n-k} P_{n-k-1}+P_{n} + \sum_{p=0}^{k-2} X_n X_{n-1} \cdots X_{n-p} P_{n-p-1}
\\
& \quad = \underline{X_{n-k} P_{n-k-1}+P_{n}} + \sum_{p=0}^{k-2} X_n X_{n-1} \cdots X_{n-p} (\underline{X_{n-k} P_{n-k-1} + P_{n-p-1}}) \quad \text{(from \eqref{eq:g-def-small-f})}
\\
& \quad = X_n P_{n-1} + P_{n-k} + \sum_{p=0}^{k-2} X_n X_{n-1} \cdots X_{n-p} (X_{n-p-1} P_{n-p-2} + P_{n-k}) 
\\
& \hspace{7cm} 
\text{(apply \eqref{eq:f-f} to the underlined parts)}
\\
& \quad = \left(1 + \sum_{p=0}^{k-2} X_n X_{n-1} \cdots X_{n-p} \right) P_{n-k}
+ X_n X_{n-1} \cdots X_{n-k+1} P_{n-k} 
\\
& \qquad \qquad + X_n P_{n-1} + \sum_{p=0}^{k-3} X_n X_{n-1} \cdots X_{n-p} X_{n-p-1} P_{n-p-2}
\\
& \quad = D^{(k+1)}_n P_{n-k}
+ \sum_{p=0}^{k-2} X_n X_{n-1} \cdots X_{n-p} P_{n-p-1} \quad \text{(from \eqref{eq:g-def-small-f})}. 
\end{align*}
Hence we obtain \eqref{eq:D-X-f}. 
\end{proof}

\begin{lem}
We have the following:
\begin{align}\label{eq:r-f-proof}
  r_{i, 1}(D_n^{(k)}) = \frac{P_{n-1}}{X_{n-1}X_{n-2} \cdots X_{n-k+1} P_{n-k}} D_n^{(k)} \quad \text{ for } 2 \leq k \leq d'm/d_i.
\end{align}
\end{lem}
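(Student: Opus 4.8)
The plan is to prove \eqref{eq:r-f-proof} by induction on $k$, after first computing how $r_{i,1}$ acts on a single factor $X_n$. Since $F_i(n)$ only involves the variables $y_j$ with $j\neq i$, it is fixed by $r_{i,1}$; hence, writing $X_n=F_i(nd_i)/\bigl(y_i(nd_i)\,y_i((n+1)d_i)\bigr)$ as in \eqref{a-y} and substituting \eqref{eq:Weyl-r_is} for $r_{i,1}(y_i(nd_i))$ and $r_{i,1}(y_i((n+1)d_i))$, the $y_i$-factors together with one $X$-factor cancel and I obtain
\[
r_{i,1}(X_n)=\frac{P_n}{P_{n-2}\,X_{n-1}}\quad\text{in }\cY_{\cF,1}.
\]
This is the only point at which the definition of $r_{i,1}$ enters; everything that follows is an identity purely among the $X_n$ and $P_n$.

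Next I would record the auxiliary identity
\[
X_n X_{n-1}\cdots X_{n-k+1}\,P_{n-k}=P_n+\Bigl(\prod_{l\in N_{i,1}}X_l-1\Bigr)D_n^{(k)},
\]
with the convention $D_n^{(1)}=1$. For $k=1$ this is exactly \eqref{eq:f-f}, namely the index-independence of $X_mP_{m-1}-P_m=\prod_{l\in N_{i,1}}X_l-1$. For $k\ge 2$ it follows from \eqref{eq:D-X-f}: rewriting the factor $X_{n-k}P_{n-k-1}$ there by means of \eqref{eq:f-f} gives $(D_n^{(k+1)}-D_n^{(k)})P_{n-k}=P_n+(\prod_{l}X_l-1)D_n^{(k)}$, and since $D_n^{(k+1)}-D_n^{(k)}=X_nX_{n-1}\cdots X_{n-k+1}$ by \eqref{eq:g-def-small-f}, the identity drops out.

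For the induction itself, the base case $k=2$ is immediate: $r_{i,1}(D_n^{(2)})=1+r_{i,1}(X_n)=1+P_n/(P_{n-2}X_{n-1})$, and comparing this with the claimed value $\tfrac{P_{n-1}}{X_{n-1}P_{n-2}}(1+X_n)$ reduces, after clearing denominators, to \eqref{eq:f-f} with $n'=n-1$. For the step (so $k\ge 3$, whence $k-1\ge 2$ lies within the lemma's range), I use the recursion $D_n^{(k)}=1+X_nD_{n-1}^{(k-1)}$ from \eqref{eq:g-def-small-f}, apply $r_{i,1}$, and insert both the formula for $r_{i,1}(X_n)$ and the induction hypothesis for $r_{i,1}(D_{n-1}^{(k-1)})$. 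The $P_{n-2}$-factors cancel, and after clearing denominators the desired equality reduces to
\[
X_{n-1}X_{n-2}\cdots X_{n-k+1}\,P_{n-k}-P_{n-1}=\Bigl(\prod_{l\in N_{i,1}}X_l-1\Bigr)D_{n-1}^{(k-1)},
\]
which is precisely the auxiliary identity above evaluated at index $n-1$ and length $k-1$.

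The genuine content is concentrated in the computation of $r_{i,1}(X_n)$ and in the realization that the single index-independence statement \eqref{eq:f-f} (equivalently \eqref{eq:D-X-f}) supplies every algebraic identity the induction needs. The main obstacle is therefore not conceptual but bookkeeping: the arguments live in the cyclic set $N_{i,1}=d_i\Z/d'm\Z$, the products run over ranges whose endpoints shift under the recursion, and one must track carefully the cancellation of the various $P$-factors so that the induction hypothesis matches the shifted identity. Once these indices are handled correctly, the argument is mechanical.
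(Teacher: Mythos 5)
Your proposal is correct and essentially reproduces the paper's own proof: both arguments first compute $r_{i,1}(X_n)=P_n/(X_{n-1}P_{n-2})$ from \eqref{eq:Weyl-r_is}, then induct on $k$ with the identical base case ($k=2$, via \eqref{eq:f-f} with $n'=n-1$), everything resting on the index-independence \eqref{eq:f-f} together with \eqref{eq:D-X-f}. The only difference is bookkeeping in the inductive step: you peel off the leading factor via $D_n^{(k)}=1+X_nD_{n-1}^{(k-1)}$ and apply the hypothesis at the shifted index $n-1$, with \eqref{eq:D-X-f} repackaged (using \eqref{eq:f-f}) as your auxiliary identity, whereas the paper splits off the top summand $X_nX_{n-1}\cdots X_{n-k+1}$ at fixed $n$, telescopes $r_{i,1}$ of that product, and invokes \eqref{eq:D-X-f} directly.
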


\begin{proof}
We prove this by induction on $k$. Note that from \eqref{eq:Weyl-r_is} $r_{i,1}$ acts on $X_n$ as
\begin{align}\label{eq:r-on-X}
  r_{i,1}(X_n) = \frac{P_n}{X_{n-1} P_{n-2}}.
\end{align}
When $k=2$, we have 
$$ 
  r_{i,1}(D_n^{(2)}) = r_{i,1}(1+X_{n}) = 1 + \frac{P_{n}}{X_{n-1}P_{n-2}} =  \frac{P_{n-1}}{X_{n-1}P_{n-2}} D_n^{(2)}
$$
where we use \eqref{eq:f-f} with $n'=n-1$ at the last equality.
By induction hypothesis and \eqref{eq:r-on-X}, for $k \geq 2$ we have 
\begin{align*}
  r_{i,1}(D_n^{(k+1)}) &= r_{i,1}(D_n^{(k)}) + r_{i,1}(X_{n}X_{n-1}\cdots X_{n-k+1})
  \\
  &= \frac{P_{n-1}}{X_{n-1}X_{n-2} \cdots X_{n-k+1} P_{n-k}} D_n^{(k)} 
     + \frac{P_{n}P_{n-1}}{X_{n-1}X_{n-2} \cdots X_{n-k} P_{n-k}  P_{n-k-1}}
  \\
  & \hspace{5cm} \text{(from the assumption and \eqref{eq:r-on-X})} 
  \\
  &= \frac{P_{n-1}}{X_{n-1}X_{n-2} \cdots X_{n-k} P_{n-k} P_{n-k-1}} \left(D_n^{(k)} X_{n-k} P_{n-k-1}+P_{n} \right)
  \\
  &= \frac{P_{n-1}}{X_{n-1}X_{n-2} \cdots X_{n-k} P_{n-k-1}} D_n^{(k+1)}
     \text{(from \eqref{eq:D-X-f})}.
\end{align*}
\end{proof}

\begin{proof}[Proof of Proposition \ref{prop:r_is_on_f}]
We obtain \eqref{eq:f-prop} from \eqref{eq:r-f-proof} by setting $k=d'm/d_i$, 
due to the fact $D_n^{(d'm/d_i)} = P_n$.
\end{proof}

Now we are ready to prove Proposition \ref{lem:Weyl-prefolding}.
By the definition of $r_{i,s}$ \eqref{eq:Weyl-r_is}, it is suffice to show
$r_{i,s}^2(y_i(n)) = y_i(n)$ for $n \in N_{n,s}$. By using \eqref{eq:f-prop}, we obtain 
\begin{align*}
r_{i,s}^2&(y_i(n))
=
r_{i,s}\left(\frac{P_i(n-2d_i)}{P_i(n-d_i)} \frac{F_i(n-d_i)}{y_i(n-d_i)}\right)
\\
&=\frac{P_i(n-3d_i) X_i(n-2d_i)}{P_i(n-2d_i) X_i(n-d_i)}F_i(n-d_i) \frac{P_i(n-2d_i)}{P_i(n-3d_i) y_i(n-d_i) X_i(n-2d_i)}   
= y_i(n).
%
%
\end{align*}
This completes the proof.


\end{document}